\definecolor{trp}{rgb}{1,1,1}
\definecolor{red}{rgb}{1,0,.2}
\newtheorem{theorem}{Theorem}[section]
\theoremstyle{plain}
\newtheorem{conjecture}{Conjecture}
\newtheorem{corollary}[theorem]{Corollary}
\newtheorem{definition}[theorem]{Definition}
\newtheorem{example}[theorem]{Example}
\newtheorem{question}[theorem]{Question}
\newtheorem{lemma}[theorem]{Lemma}
\newtheorem{prop}[theorem]{Proposition}
\newtheorem{remark}[theorem]{Remark}
\numberwithin{equation}{section}
\newcommand{\R}{\mathbb{R}}
\newcommand{\ii}{\mathbf{i}}
\newcommand{\jj}{\mathbf{j}}
\newcommand{\iih}{\boldsymbol{\hat\imath}}
\newcommand{\supp}{\mathrm{supp}}
\definecolor{blue}{rgb}{0,0,1}
\definecolor{red}{rgb}{1,0,.7}
\begin{document}
\title[The $L^q$ spectrum of self-affine measures on sponges]{The $L^q$ spectrum of self-affine measures on sponges 
}

\author{Istv\'an Kolossv\'ary}
\address{Istv\'an Kolossv\'ary, \newline University of St Andrews,  School of Mathematics and Statistics, \newline St Andrews, KY16 9SS, Scotland} \email{itk1@st-andrews.ac.uk}

\thanks{2020 {\em Mathematics Subject Classification.} Primary 28A80, 37D35 Secondary 37C45, 37B10
\\ \indent
{\em Key words and phrases.} $L^q$ spectrum, self-affine sponge, variational principle, box dimension, method of types, Ledrappier--Young formula}

\begin{abstract}
In this paper a sponge in $\mathbb{R}^d$ is the attractor of an iterated function system consisting of finitely many strictly contracting affine maps whose linear part is a diagonal matrix. A suitable separation condition is introduced under which a variational formula is proved for the $L^q$ spectrum of any self-affine measure defined on a sponge for all $q\in\mathbb{R}$. Apart from some special cases, even the existence of their box dimension was not proved before. Under certain conditions the formula has a closed form which in general is an upper bound. The Frostman and box dimension of these measures is also determined. The approach unifies several existing results and extends them to arbitrary dimensions. The key ingredient is the introduction of a novel pressure function which aims to capture the growth rate of box counting quantities on sponges. We show that this pressure satisfies a variational principle which resembles the Ledrappier--Young formula for Hausdorff dimension. 
\end{abstract}

\maketitle

\thispagestyle{empty}

\section{Introduction}\label{sec:intro}

The $L^q$ spectrum $T(\nu,q): \mathbb{R}\to\mathbb{R}$ of a compactly supported Borel probability measure $\nu$ quantifies the global fluctuations of $\nu$ and thus knowledge of it provides valuable information about the multifractal properties of $\nu$ and also about the dimension of its support, see Section~\ref{sec:IntroLqSpectrum}. As such, it is a basic tool in fractal geometry that has a rich literature concerning measures supported by different fractal sets.

It was shown by Peres and Solomyak~\cite{PeresSolomyak_Indiana00} that the $L^q$ spectrum of any self-conformal measure exists for $q>0$ and extended to graph-directed self-conformal measures by Fraser~\cite{Fraser_TAMS2016}. When the support is a self-similar set, a closed form expression for the $L^q$ spectrum is known~\cite{LauNgai_AdvMath99, RIEDI_JMAA95, Shmerkin_Annals19} under different separation conditions on the cylinder sets. We do not pursue this direction further since the focus of this paper is on the more general self-affine setting. 

Self-affine sets and measures are important building blocks in the study of smooth non-conformal dynamical systems and have thus gained a lot of attention lately. The study of these systems is more challenging than the conformal case and therefore there are far fewer results especially in dimensions $d\geq 3$. In one line of research, the $L^q$ spectrum of specific systems are considered. Feng and Wang~\cite{FengWang2005} calculated the $L^q$ spectrum of self-affine measures on the plane supported on attractors of iterated function systems given by orientation preserving diagonal matrices satisfying a suitable separation condition. This was extended by Fraser~\cite{Fraser_TAMS2016} to include reflections and rotations by 90 degrees. Ni and Wen~\cite{NiWen_LqGraphDirSelfAffine} considered a class of graph-directed self-affine measures. In higher dimensions, self-affine measures have only been studied on Bedford--McMullen (also called Sierpi\'nski) sponges by Olsen~\cite{Olsen_PJM98}, and~\cite{FraserOlsen_indiana, Olsen_RandomBMSponge} in a random setting. In the other direction, `generic' systems were considered in~\cite{BarralFeng_13,Falconer_GenericLq99}. The main objective of this paper is to build a general framework to study box counting quantities and in particular to determine the $L^q$ spectrum of self-affine measures supported on higher dimensional self-affine sponges where very little is known. These sets constitute a fundamental family of self-affine sets showcasing a number of interesting properties that set them apart from the `generic' systems.
 
\subsection*{Main contribution} In this paper the linear part of all the strictly contracting affine maps defining a sponge in $\mathbb{R}^d$ is a diagonal matrix. The \emph{separation of principal projections  condition} (SPPC) is introduced, see Definition~\ref{def:SPPC} and~\cite{FK_DimAMeasure_arxiv}, which gives extra grid alignment for the first level cylinder sets of the sponge. Roughly speaking, the entries of the diagonal matrices determine `relevant' orderings of the coordinates and the SPPC assumes that all orthogonal projections of the first level cylinders onto subspaces determined by these `relevant' orderings satisfy the, more familiar, open set condition. On the plane, the much studied  Lalley--Gatzouras~\cite{GatzourasLalley92} and Bara\'nski~\cite{BARANSKIcarpet_2007} (hence also Bedford--McMullen~\cite{Bedford84_phd,mcmullen84}) carpets are precisely the sets which satisfy the SPPC. Therefore, it naturally unifies the Lalley--Gatzouras and Bara\'nski classes, moreover, in higher dimensions it extends to a much wider class of sponges than simply these two classes.

The main result, see Theorem~\ref{thm:Lqmain}, states that if the self-affine measure $\nu_{\boldsymbol{\mu}}$ (defined by the probability vector $\boldsymbol{\mu}$) is fully supported on a self-affine sponge in $\mathbb{R}^d$ which satisfies the SPPC, then
\begin{equation*}
T(\nu_{\boldsymbol{\mu}},q) =  P(\boldsymbol{\psi}_q^{\boldsymbol{\mu}}) \quad\text{ for all } q\in\mathbb{R},
\end{equation*}
where $P$ is a novel pressure-like functional defined in~\eqref{eq:13} and $\boldsymbol{\psi}_q^{\boldsymbol{\mu}}$ is a family of potentials defined in~\eqref{eq:38} that depend on $q$ and $\boldsymbol{\mu}$. The key contribution is to use ideas from thermodynamic formalism to define $P$ in a way that is specifically tailored to capture the polynomial growth rate of box counting quantities such as the $L^q$ spectrum on sponges. The main technical result of the paper, see Theorem~\ref{thm:main1}, is to show that $P$ satisfies a variational principle. It shows resemblance to the Ledrappier--Young formula for Hausdorff dimension. However, since the box and Hausdorff dimension of such sponges is `typically' different, there is a clear distinction between the two variational principles. Generalising this variational principle further could be of independent interest.

We point out a few important aspects and advantages of our approach:
\begin{itemize}
\item the result for $T(\nu_{\boldsymbol{\mu}},q)$ is valid \emph{for all} $q\in\mathbb{R}$. Handling negative $q$ is known to be very challenging, in particular, in the non-conformal case we are only aware of the result of Olsen~\cite{Olsen_PJM98} about Bedford--McMullen sponges which are a very special case of the ones we consider. The potential $\boldsymbol{\psi}_q^{\boldsymbol{\mu}}$ is just a specific choice in our more general Theorem~\ref{thm:main1}.
\item The separation condition is weaker than the one considered in~\cite{Olsen_PJM98}.
\item The box dimension of the sponge is given by choosing $q=0$. Apart from the planar case, some three dimensional cases and Lalley--Gatzouras sponges~\cite{Raoetal_BoxCountingMeasure_arxiv}, even the box dimension of these sponges was not known before to exist.
\item Introducing `relevant' orderings of the coordinates is the key ingredient in the definition of $P$. The necessity of this is demonstrated on an example in Section~\ref{sec:3DBaranski}.
\end{itemize}
Section~\ref{sec:discussion} details related literature and includes two worked out examples showing how our approach is able to go beyond previous methods.

Further contribution is to calculate the Frostman and box dimension of any self-affine measure supported by a sponge satisfying the SPPC, see Theorem~\ref{thm:dimBFmeasure}. These dimensions give the slope of the asymptotes of the $L^q$ spectrum as $q$ tends to $+\infty$ and $-\infty$, respectively. To the best of our knowledge these have only been calculated for Bedford--McMullen sponges~\cite{Olsen_PJM98}.

We provide sufficient conditions under which the variational formula translates into a closed form expression, see Corollary~\ref{cor:main}. This is the case for sponges in the Lalley--Gatzouras class. In general, the closed form gives an upper bound for the pressure. A natural direction for further research could be to get a better understanding of the relationship between the variational formula and the closed form.

\subsubsection*{Structure of paper}
We continue the section with the formal introduction of the $L^q$ spectrum and then the self-affine sponges and measures. In Section~\ref{sec:01} we set up symbolic notation in order to define the pressure $P(\boldsymbol{\varphi})$ in~\eqref{eq:13} and state all our results regarding it. Section~\ref{sec:02} begins with the definition of the SPPC followed by the statements about the $L^q$ spectrum and the Frostman and box dimensions of the self-affine measure. Section~\ref{sec:discussion} gives further context to our results. Sections~\ref{sec:03} through~\ref{sec:proofFrostmanBox} contain the proofs of our results.

\subsection{The \texorpdfstring{$L^q$}{Lq} spectrum}\label{sec:IntroLqSpectrum}

A collection of closed balls $\{B(x_i,\delta)\}_i$ is a \emph{centred packing} of a set $F\subset \mathbb{R}^d$ if the balls are disjoint and all $x_i\in F$. Given a probability measure $\nu$ with compact support $\supp(\nu)$, for $\delta>0$ and $q\in\mathbb{R}$ let
\begin{equation*}
T_{\delta}(\nu,q) \coloneqq \sup \bigg\{\sum_{i} \big(\nu(B(x_{i}, \delta))\big)^{q} \mid \left\{B\left(x_{i}, \delta\right)\right\}_{i} \text { is a centred packing of } \supp(\nu) \bigg\}
\end{equation*}
and define the \emph{$L^q$ spectrum} of $\nu$ to be
\begin{equation*}
	T(\nu,q) \coloneqq \lim_{\delta\to 0} \frac{\log T_{\delta}(\nu,q)}{-\log \delta}
\end{equation*}
provided the limit exists, otherwise one takes lower and upper limits denoted by $\underline{T}(\nu,q)$ and $\overline{T}(\nu,q)$, respectively. Various definitions exist in the literature, see for example~\cite[Section~4]{PeresSolomyak_Indiana00} or~\cite[Section~1.1]{Fraser_TAMS2016} for some comparisons. The main reason for our choice is that $T(\nu,q)$ is well-defined for all $q\in\mathbb{R}$. Technical issues can arise for other definitions when $q<0$, see the remark after~\cite[proof of Proposition~3.1]{LauNgai_AdvMath99} or after~\cite[Proposition~2]{RIEDI_JMAA95}.  The \emph{$L^q$ dimension} of $\nu$ is the ratio
\begin{equation*}
D(\nu,q)\coloneqq \frac{T(\nu,q)}{1-q} \quad\text{ for } q\neq 1.
\end{equation*}
In case $q=1$, the \emph{entropy dimension} is used instead defined by 
\begin{equation*}
\dim_{\mathrm e} \nu \coloneqq \lim_{\delta\to 0} \frac{\inf  \sum_{B\in\mathcal{D}_{\delta}} \nu(B) \log(1/\nu(B)) }{-\log \delta}, 
\end{equation*}
where the infimum is taken over all finite Borel partitions of $\supp(\nu)$ with sets of diameter at most $\delta$. One takes lower and upper limits if the limit does not exist. Let $\dim_{\mathrm H}, \dim_{\mathrm B}$ and $\dim_{\mathrm P}$ denote the Hausdorff, box and packing dimensions, respectively, see~\cite{FalconerBook} for basic definitions. 

Knowledge of the $L^q$ spectrum of a measure provides valuable information about the measure and its support. It follows from the definitions that
\begin{equation*}
\underline{\dim}_{\mathrm B}\, \supp(\nu) = \underline{T}(\nu,0) \;\;\text{ and }\;\; \overline{\dim}_{\mathrm B} \,\supp(\nu) = \overline{T}(\nu,0).
\end{equation*}
Furthermore, if $\overline{T}(\nu,q)$ is differentiable at $q=1$, then Ngai~\cite{Ngai_97PAMS} showed that
\begin{equation*}
\dim_{\mathrm H} \nu = \dim_{\mathrm P} \nu = \dim_{\mathrm e} \nu = -\overline{T}'(1).
\end{equation*}
The value $-T(\nu,2)$ is often called the \emph{correlation dimension} or \emph{R\'enyi entropy}. The asymptotes of $T(\nu,q)$ as $q$ tends to $+ \infty$ and $-\infty$ are related to the Frostman and box dimension of the measure, respectively. Defined in~\cite{FFF_MinkowskiDimMeas_2020arxiv}, the \emph{Frostman dimension} of $\nu$ gives the decay rate of the ball with largest $\nu$ measure, more precisely,  
\begin{align*}
\dim_{\mathrm{F}} \nu\coloneqq \sup \{s \geq 0:\; &\text{there exists a constant } C \geq 1 \text { such that } \\
	&\left. \nu(B(x, \delta)) \leq C \delta^{s} \text { for all } x \in X \text{ and } 0<\delta<1\right\}
\end{align*}
and the dual notion of upper box (or Minkowski) dimension of $\nu$ is
\begin{align*}
\overline{\dim}_{\mathrm{B}}\, \nu\coloneqq \inf \{s \geq 0:\; &\text{there exists a constant } c>0 \text{ such that } \\
&\left. \nu(B(x, \delta)) \geq c \delta^{s} \text{ for all } x \in X \text{ and } 0<\delta<1 \right\}.
\end{align*}
For the lower box dimension of $\nu$, denoted $\underline{\dim}_{\mathrm{B}}\,\nu$, only a sequence $\delta_n\to 0$ needs to exist for which $\nu(B(x, \delta_n)) \geq c \delta_n^{s}$. If $\overline{\dim}_{\mathrm{B}}\,\nu=\underline{\dim}_{\mathrm{B}}\,\nu$, then the common value is called the \emph{box dimension} of $\nu$ denoted by $\dim_{\mathrm{B}}\,\nu$. Heuristically, if $q$ is a very large positive number then $T_\delta(\nu,q)$ is dominated by the ball(s) with largest mass, hence, $T_{\delta}(\nu,q)$ roughly behaves like $\delta^{q\cdot \dim_{\mathrm{F}}\nu}$ and one can expect $D(\nu,q)\to\dim_{\mathrm{F}}\nu$ as $q\to+\infty$. See~\cite[Proposition~4.2]{FFF_MinkowskiDimMeas_2020arxiv} for a precise statement of the dual claim that $D(\nu,q)\to\overline{\dim}_{\mathrm{B}}\,\nu$ as $q\to-\infty$. It was recently shown in~\cite{BJK_Chaos_IMRN22} that $\dim_{\mathrm{B}}\nu$ determines the convergence rate of the chaos game.

The $L^q$ spectrum is also intimately connected to multifractal analysis, see~\cite[Chapter 17]{FalconerBook} for some background. In one direction, the \emph{coarse multifractal spectrum} $f_C(\alpha):\, \mathbb{R}^+\to\mathbb{R}^+$ gives, roughly speaking, the power law exponent of the number of $\delta$-mesh cubes with $\nu$ measure approximately $\delta^{\alpha}$. Riedi~\cite{RIEDI_JMAA95} showed that the Legendre transform of $f_C(\alpha)$ is always equal to the $L^q$ spectrum, and vice-versa, if $T(\nu,q)$ is differentiable everywhere then its Legendre transform is equal to $f_C(\alpha)$ (otherwise it gives the convex hull of $f_C(\alpha)$). In the other direction, the \emph{fine multifractal spectrum} $f_H(\alpha)$ gives the Hausdorff dimension of the set of points in the support of $\nu$ with local dimension equal to $\alpha$. As a heuristic, it is said that the \emph{multifractal formalism} holds if $f_H(\alpha)$ is given by the Legendre transform of the $L^q$ spectrum. This fails in general, but was shown to hold for example for self-similar sets satisfying the strong separation condition~\cite{CawleyMauldin_Multifractal92, RIEDI_JMAA95}. Olsen introduced generalised Hausdorff measures to serve as an alternative to the $L^q$ spectrum~\cite{Olsen_AdvMath95} and showed that this formalism works for self-affine measures on Bedford--McMullen sponges~\cite{Olsen_PJM98}.

\subsection{Self-affine sponges and measures}\label{sec:IntroSponges}

Given a finite index set $\mathcal{I}$, an affine \emph{iterated function system} (IFS) on $\R^d$ is a finite family $\mathcal{F}=\{f_i\}_{i\in\mathcal{I}}$ of affine contracting maps $f_i:\, \R^d\to\R^d$ of the form $f_i(x)=A_i x+t_i$. The IFS determines a unique, non-empty compact set $F$, called the \emph{attractor}, that satisfies the relation
\begin{equation*}
	F = \bigcup_{i\in\mathcal{I}} f_i(F).
\end{equation*}
In case the linear part $A_i$ of each $f_i$ is a diagonal matrix with main diagonal $\big(a_i^{(1)},\ldots,a_i^{(d)}\big)$, we call $F$ a \emph{(self-affine) sponge}. For $1\leq n\leq d$ and $i\in\mathcal{I}$, let $\lambda_i^{(n)}\coloneqq|a_i^{(n)}|\in(0,1)$. Without loss of generality we assume that $f_i([0,1]^d)\subset [0,1]^d$ and that there is no $i\neq j$ such that $f_i(x)=f_j(x)$ for every $x\in[0,1]^d$.  We also assume that there exists $r_0=r_0(F)>0$ such that for every $1\leq n\leq d$ and $u\in\{0,1\}$ there exists $k_{u}^{(n)}\in\mathcal{I}$ such that
\begin{equation}\label{eq:19}
\mathrm{dist} \big( \{(x_1,\ldots,x_d)\in[0,1]^d:\, x_n=u \}, f_{k_{u}^{(n)}}([0,1]^d) \big) \geq r_0.
\end{equation}
In other words, for each face of the unit hypercube there is a map which sends the hypercube at least $r_0$ distance away from the face. Otherwise, $F$ is a subset of that face and is not `genuinely' $d$-dimensional.

More generally, $F$ can be referred to as a sponge also if the diagonal matrix is composed with a permutation matrix, see~\cite{Fraser12Boxlike} for $d=2$ and~\cite{fraserJurga2021AdvMath} for $d=3$. Sponges on the plane are generally called \emph{self-affine carpets} or \emph{box-like sets} and have a rich literature compared to the case $d\geq 3$. We give a more detailed account of relevant related literature in Section~\ref{sec:discussion}.

The orthogonal projections of $F$ onto the principal $n$-dimensional subspaces play a vital role in the arguments. Let $\mathcal{S}_d$ be the symmetric group on the set $\{1,\ldots,d\}$. For a permutation $\sigma=\{\sigma_1,\ldots,\sigma_d\}\in\mathcal{S}_d$ of the coordinates, let $E_n^{\sigma}$ denote the $n$-dimensional subspace spanned by the coordinate axes indexed by $\sigma_1,\ldots,\sigma_n$. Let $\Pi_n^{\sigma}:[0,1]^d\to E_n^\sigma$ be the orthogonal projection onto $E_n^{\sigma}$. For $n=d$, $\Pi_d^{\sigma}$ is simply the identity map. We say that $f_i$ and $f_j$ \emph{overlap exactly} on $E_n^{\sigma}$ if
\begin{equation*}
\Pi_n^{\sigma}(f_i(x))=\Pi_n^{\sigma}(f_j(x)) \;\text{ for every }\; x\in[0,1]^d.
\end{equation*}
Observe that if $f_i$ and $f_j$ overlap exactly on $E_n^{\sigma}$ then they also overlap exactly on $E_m^{\sigma}$ for all $1\leq m \leq n$ but may not overlap exactly on any $E_n^{\sigma'}$ for some other $\sigma'\in\mathcal{S}_d$. The definition of the separation condition we require is postponed to Definition~\ref{def:SPPC}.

Given an affine IFS $\mathcal{F}$ with attractor $F$ and a probability vector $\boldsymbol{\mu}=(\mu(i))_{i\in\mathcal{I}}$ with strictly positive entries, there exists a unique probability measure $\nu_{\boldsymbol{\mu}}$ fully supported by $F$ which satisfies 
\begin{equation*}
	\nu_{\boldsymbol{\mu}} = \sum_{i\in\mathcal{I}} \mu(i)\, \nu_{\boldsymbol{\mu}}\circ f_i^{-1}.
\end{equation*}
The \emph{self-affine measure} $\nu_{\boldsymbol{\mu}}$ has an equivalent characterisation as the push-forward of the Bernoulli measure by the natural projection from the symbolic space to the attractor. Formally, given $\boldsymbol{\mu}$, the Bernoulli measure on the symbolic space $\Sigma=\mathcal{I}^{\mathbb{N}}$ is the product measure $\widetilde{\nu}_{\boldsymbol{\mu}}=\boldsymbol{\mu}^{\mathbb{N}}$. The natural projection $\pi:\,\Sigma\to F$ is given by
\begin{equation*}
	\pi(\ii) =\pi(i_1,i_2,\ldots, i_k, \ldots) \coloneqq \lim_{k\to\infty} f_{i_1i_2\ldots i_k}(0),
\end{equation*} 
where $f_{i_1i_2\ldots i_k}=f_{i_1}\circ f_{i_2} \circ\ldots\circ f_{i_k}$. Then $\nu_{\boldsymbol{\mu}} = \widetilde{\nu}_{\boldsymbol{\mu}}\circ \pi^{-1}$.

\section{Variational principle for box counting quantities}\label{sec:01}

The classical variational principle for topological pressure, pioneered by the works of Ruelle~\cite{Ruelle_73TAMS} and Walters~\cite{Walters_75} is an essential tool in the thermodynamic formalism of dynamical systems. Given a dynamical system $(X,T)$, the topological pressure $P(T,\varphi)$ of a continuous potential $\varphi:\, X\to\mathbb{R}$ satisfies the variational principle
\begin{equation}\label{eq:16}
	P(T, \varphi)=\sup _{\nu \in \mathscr{M}^{T}(X)}\left(h_{\nu}(T)+\int_{X} \varphi\, \mathrm{d} \nu\right),
\end{equation}
where $\mathscr{M}^{T}(X)$ denotes the set of $T$-invariant Borel probability measures on $X$ and $h_{\nu}(T)$ is the measure-theoretic entropy of $\nu$ with respect to $T$, see~\cite{waltersBook} for definitions and background. More recently, motivated by the study of self-affine carpets and sponges, a more general weighted notion of pressure for factor maps between general topological dynamical systems was introduced \cite{BarralFeng_12, FengHuang_16, tsukamoto_2022}. Given $a_1>0$, $a_2\geq 0$ and two dynamical systems $(X,T)$ and $(Y,S)$ with a factor map $f$ between them (i.e. $f$ is a continuous surjection with $f\circ T = S\circ f$) there is a meaningful way to define the weighted pressure 	$P^{\left(a_{1}, a_{2}\right)}(T, \varphi)$ of the potential $\varphi$ such that the following
variational principle holds
\begin{equation}\label{eq:17}
	P^{\left(a_{1}, a_{2}\right)}(T, \varphi)=\sup _{\nu \in \mathscr{M}^{T}(X)}\left(a_{1} h_{\nu}(T)+a_{2} h_{\nu\circ f^{-1}}(S)+\int_{X} \varphi\, \mathrm{d} \nu\right).
\end{equation}
The formula can be extended to a sequence of factor maps. The definition of the pressure resembles the Hausdorff dimension. For example, for a particular choice of $(a_1,a_2)$ and $\varphi\equiv 0$, the Hausdorff dimension of a Bedford--McMullen carpet can be recovered from~\eqref{eq:17}. Olsen's formalism for multifractal analysis mentioned at the end of Section~\ref{sec:IntroLqSpectrum} is related to this weighted pressure. However, the sponges considered in this paper `typically' have different Hausdorff and box dimension. Therefore, these results can not be used directly to calculate the $L^q$ spectrum.

Instead, the main technical contribution of the paper is to set up a novel formalism that attempts to capture box counting quantities such as the $L^q$ spectrum. We keep the setting as simple as possible that still accommodates our goal. Generalising this formalism to more general contexts could be of independent interest.

\subsection{Symbolic setting}\label{subsec:11}

Recall $\mathcal{I}$ denotes the finite index set of the IFS $\mathcal{F}$ and $\Sigma = \mathcal{I}^{\mathbb{N}}$ is the space of all one-sided infinite words $\ii=i_1,i_2,\ldots$. For $\delta>0$, the \emph{$\delta$-stopping of $\ii\in\Sigma$ in the $n$-th coordinate} (for $n=1,\ldots,d$) is the unique integer $L_{\delta}(\ii,n)$ such that
\begin{equation}\label{eq:10}
\prod_{\ell=1}^{L_{\delta}(\ii,n)} \lambda_{i_\ell}^{(n)} \leq \delta < \prod_{\ell=1}^{L_{\delta}(\ii,n)-1}\lambda_{i_\ell}^{(n)}.
\end{equation}
We say that \emph{$\ii\in\Sigma$ is $\sigma$-ordered at scale $\delta$} if $L_{\delta}(\ii,\sigma_d)\leq L_{\delta}(\ii,\sigma_{d-1})\leq \ldots\leq L_{\delta}(\ii,\sigma_1)$, where to make the ordering unique, we use the convention that if $L_{\delta}(\ii,\sigma_n)=L_{\delta}(\ii,\sigma_{n-1})$ then $\sigma_n>\sigma_{n-1}$. We introduce $\Sigma_{\delta}^{\sigma}\coloneqq \{\ii\in\Sigma:\, \ii \text{ is } \sigma\text{-ordered at scale } \delta\}$, the set $\mathcal{A}_{\delta}\coloneqq \{ \sigma\in\mathcal{S}_d:\, \Sigma_{\delta}^{\sigma}\neq \emptyset\}\subseteq \mathcal{S}_d$  and let $\mathcal{A}\coloneqq \bigcup_{\delta>0}\mathcal{A}_{\delta}$. Since the $\sigma$-ordering is unique, the collection $\{\Sigma_{\delta}^{\sigma}:\, \sigma\in\mathcal{A}_{\delta}\}$ gives a partition of $\Sigma$ for every $\delta>0$.

For each permutation $\sigma=\{\sigma_1,\ldots,\sigma_d\}\in\mathcal{A}$ we define index sets $\mathcal{I}_d^{\sigma}\supseteq \mathcal{I}_{d-1}^{\sigma}\supseteq \ldots \supseteq \mathcal{I}_1^{\sigma}$ with $\mathcal{I}_d^{\sigma}\coloneqq \mathcal{I}$ as follows. Initially set $\mathcal{I}_d^{\sigma}= \mathcal{I}_{d-1}^{\sigma}= \ldots = \mathcal{I}_1^{\sigma}$. For $i<j$ ($i,j\in\mathcal{I}$), starting from $n=d-1$ and decreasing $n$, we check whether $f_i$ and $f_j$ overlap exactly on $E_n^{\sigma}$. If they do not overlap exactly for any $n$, then we move onto the next pair $(i,j)$, otherwise, we take the largest $n'$ for which $f_i$ and $f_j$ overlap exactly on $E_{n'}^{\sigma}$ and remove $j$ from $\mathcal{I}_{n'}^{\sigma},\mathcal{I}_{n'-1}^{\sigma},\ldots, \mathcal{I}_{1}^{\sigma}$ and then move onto the next pair $(i,j)$. The sets $\mathcal{I}_{d-1}^{\sigma}, \ldots, \mathcal{I}_1^{\sigma}$ are what remain after repeating this procedure for all pairs $i<j$. Further abusing notation, we denote by $\Pi_n^{\sigma}:\, \mathcal{I}\to\mathcal{I}_n^{\sigma}$ the `projection' of $j\in\mathcal{I}$ onto $\mathcal{I}_n^{\sigma}$, i.e.
\begin{equation*}
\Pi_n^{\sigma}j=i, \quad\text{ if } f_i \text{ and } f_j \text{ overlap exactly on } E_n^{\sigma} \text{ and } i\in\mathcal{I}_n^{\sigma}.
\end{equation*}
Defining $\Sigma_{n}^{\sigma}\coloneqq (\mathcal{I}_{n}^{\sigma})^{\mathbb{N}}$, we also let $\Pi_n^{\sigma}: \Sigma\to \Sigma_{n}^{\sigma}$ by acting coordinate wise, i.e. $\Pi_n^{\sigma}\ii=\Pi_n^{\sigma} i_1,\Pi_n^{\sigma}i_2,\ldots$. For completeness, let $\Pi_d^{\sigma}$ be the identity map on $\Sigma$. On each symbolic space $\Sigma_{n}^{\sigma}$ the dynamics is run by the left shift operator. Due to the coordinate wise definition, all maps $\Pi_n^{\sigma}$ commute with the left shift, hence all are factor maps.
 
We further partition each $\Sigma_{\delta}^{\sigma}$ into symbolic $\delta$-approximate cubes which play a crucial role in covering arguments of sponges. For two (finite or infinite) words $\ii$ and $\jj$, we denote the length of their longest common prefix by $|\ii\wedge \jj|= \min\{\ell:\, i_\ell\neq j_\ell\}-1$. The \emph{symbolic $\delta$-approximate cube} containing $\ii\in\Sigma_{\delta}^{\sigma}$ is
\begin{equation}\label{eq:29}
B_{\delta}(\ii)\coloneqq \left\{\jj \in \Sigma: \left|\Pi_{n}^{\sigma} \jj \wedge \Pi_{n}^{\sigma} \ii\right| \geq L_{\delta}(\ii, \sigma_{n}) \;\text { for every } 1 \leq n \leq d\right\}.
\end{equation}
Observe that if $\ii\in\Sigma_{\delta}^{\sigma}$, then for all $\jj\in B_{\delta}(\ii)$ also $\jj\in \Sigma_{\delta}^{\sigma}$. Thus, we define the $\sigma$-ordering of $B_{\delta}(\ii)$ with the $\sigma$-ordering of $\ii$ at scale $\delta$. As a result, the set $\mathcal{B}_{\delta}^{\sigma}$ of $\sigma$-ordered $\delta$-approximate cubes forms a partition of $\Sigma_{\delta}^{\sigma}$. The name comes from the fact that the image $\pi(B_{\delta}(\ii))\subseteq F$ lies within a cuboid of side lengths at most $\delta$ parallel to the coordinate axes. Finally, if $\ii\in\Sigma_{\delta}^{\sigma}$, then the surjectivity of the maps $\Pi_n^{\sigma}$ implies that $B_{\delta}(\ii)$ can be identified with a sequence of symbols of length $L_{\delta}(\ii,\sigma_1)$ of the form 
\begin{equation}\label{eq:11}
\left(\Pi_{n}^{\sigma} i_{L_{\delta}(\ii, \sigma_{n+1})+1}, \ldots,\Pi_{n}^{\sigma} i_{L_{\delta}(\ii, \sigma_{n})}\right)_{n=1}^{d} \in \bigtimes_{n=1}^d (\mathcal{I}_{n}^{\sigma})^{L_{\delta}(\ii,\sigma_n)-L_{\delta}(\ii,\sigma_{n+1})},
\end{equation}
where we set $L_{\delta}(\ii,\sigma_{d+1})\coloneqq0$. This will be crucial in determining the number of different approximate cubes with a fixed digit frequency.
 
\subsection{Topological pressure and variational principle}\label{subsec:12}

The main new ingredient is that rather than using just a single potential on $\Sigma$, we are working with a family of potentials $\boldsymbol{\varphi}=\{\varphi_n^{\sigma}\}_{\sigma,n}$ defined on $\{\Sigma_{n}^{\sigma}\}_{\sigma,n}$. In order to keep arguments simple, we let $\varphi_n^{\sigma}$ depend on $\ii\in\Sigma_n^{\sigma}$ only through $i_1$, i.e. $\varphi_n^{\sigma}$ is essentially defined on $\mathcal{I}_n^{\sigma}$. This is still sufficient for us to obtain results about the box dimension of sponges and the $L^q$ spectrum of self-affine measures defined on them, see Section~\ref{sec:ResLq} for statements.

For a fixed family of potentials
\begin{equation}\label{eq:22}
\boldsymbol{\varphi} = \{ \varphi_n^{\sigma}:\, \mathcal{I}_n^{\sigma}\to \mathbb{R}\, |\, \sigma\in\mathcal{A},\, 1\leq n\leq d\}
\end{equation}
and $\ii\in\Sigma_{\delta}^{\sigma}$, we define
\begin{equation*}
\Phi\left(B_{\delta}(\ii)\right)\coloneqq\sum_{n=1}^{d} \,
\sum_{\ell=L_{\delta}(\ii, \sigma_{n+1})+1}^{L_{\delta}(\ii, \sigma_{n})} \varphi_{n}^{\sigma}\left(\Pi_{n}^{\sigma} i_{\ell}\right)
\end{equation*}
to be the value of $\boldsymbol{\varphi}$ on $B_{\delta}(\ii)$ at scale $\delta$. Recalling that $\Sigma= \bigsqcup_{\sigma\in\mathcal{A}_{\delta}}\bigsqcup_{B\in\mathcal{B}_{\delta}^{\sigma}}B$ for every $\delta>0$, it is natural to introduce the topological pressure like quantities
\begin{equation}\label{eq:13}
\overline{P}(\boldsymbol{\varphi}) \coloneqq \limsup_{\delta \rightarrow 0} \frac{-1}{\log \delta}\, 
\log \bigg[\sum_{\sigma \in \mathcal{A}_{\delta}} \sum_{B_{\delta}(\ii) \in \mathcal{B}_{\delta}^{\sigma}} \exp \left[\Phi\left(B_{\delta}(\ii)\right)\right]\bigg]
\end{equation}
and $\underline{P}(\boldsymbol{\varphi})$ with a $\liminf_{\delta\to 0}$ instead. We state in Theorem~\ref{thm:main1} that $\overline{P}(\boldsymbol{\varphi})=\underline{P}(\boldsymbol{\varphi})$ for any choice of $\boldsymbol{\varphi}$ and denote this common limit by $P(\boldsymbol{\varphi})$.

We introduce additional notation. The \emph{Shannon entropy} $H(\mathbf{p})$ of a probability vector $\mathbf{p}=(p(i))_i$ is the sum $-\sum_ip(i)\log p(i)$. Fix $\sigma\in\mathcal{A}$. Let $\mathcal{P}_n^{\sigma}$ denote the set of probability vectors on $\mathcal{I}_n^{\sigma}$ (i.e. $\mathbf{p}\in\mathcal{P}_n^{\sigma}$ if $p(i)\geq 0$ for all $i\in\mathcal{I}_n^{\sigma}$ and $\sum_{i \in \mathcal{I}_{n}^{\sigma}}p(i)=1$). Define $\mathcal{P}^{\sigma}\coloneqq \mathcal{P}_d^{\sigma}\times \mathcal{P}_{d-1}^{\sigma}\times\ldots \times \mathcal{P}_1^{\sigma}$. An element of $\mathcal{P}^{\sigma}$ is $\mathbf{P}_{\!\sigma}=(\mathbf{p}_{\sigma_d},\ldots,\mathbf{p}_{\sigma_1})$, where $\mathbf{p}_{\sigma_n}=(p_{\sigma_n}(i))_{i\in\mathcal{I}_n^{\sigma}}$. For $1\leq n\leq m\leq d$ and $\mathbf{p}_{\sigma_m}\in\mathcal{P}_m^{\sigma}$, we denote the \emph{Lyapunov exponent} by
\begin{equation*}
\chi_n^{\sigma}(\mathbf{p}_{\sigma_m}) \coloneqq -\sum_{i\in\mathcal{I}_{m}^{\sigma}} p_{\sigma_m}(i) \log \lambda_i^{(\sigma_n)}.
\end{equation*}
For a fixed $\mathbf{P}_{\!\sigma} \in \mathcal{P}^{\sigma}$, we define constants $C_{n}^{(d), \sigma}(\mathbf{P}_{\!\sigma})$ for $n=d,d-1,\ldots,1$ recursively as follows: let $C_d^{(d),\sigma}(\mathbf{P}_{\!\sigma})\coloneqq1/\chi_d^{\sigma}(\mathbf{p}_{\sigma_d})$ and
\begin{equation}\label{eq:15}
C_{n}^{(d), \sigma}(\mathbf{P}_{\!\sigma})\coloneqq \bigg(\! 1-\!\sum_{m=n+1}^{d} C_m^{(d),\sigma}(\mathbf{P}_{\!\sigma})\cdot \chi_n^{\sigma}(\mathbf{p}_{\sigma_m})\! \bigg) \frac{1}{\chi_n^{\sigma}(\mathbf{p}_{\sigma_n})}.
\end{equation}
Note that $C_n^{(d),\sigma}(\mathbf{P}_{\!\sigma})$ may be negative for $n< d$ and depends on $\mathbf{P}_{\!\sigma}$ only through $\chi_\ell^{\sigma}(\mathbf{p}_{\sigma_m})$ for $n\leq \ell\leq m\leq d$. Of particular importance is the subset
\begin{equation*}
\mathcal{Q}^{\sigma}\coloneqq \{ \mathbf{P}_{\!\sigma} \in \mathcal{P}^{\sigma}:\, C_n^{(d),\sigma}(\mathbf{P}_{\!\sigma})\geq 0 \text{ for all } 1\leq n\leq d \}.
\end{equation*}
In fact, we will show that $\sigma\in\mathcal{A}$ if and only if $\mathcal{Q}^{\sigma}\neq \emptyset$, see Lemma~\ref{lem:4}. Slightly abusing notation for the integral, we write
\begin{equation*}
\int\! \varphi_n^{\sigma}\,\mathrm{d}\mathbf{p}_{\sigma_n} \coloneqq \sum_{i \in \mathcal{I}_{n}^{\sigma}} p_{\sigma_{n}}(i) \cdot \varphi_{n}^{\sigma}(i).
\end{equation*}
Our main technical result, proved in Section~\ref{sec:04}, is the following variational principle for $P(\boldsymbol{\varphi})$.

\begin{theorem}\label{thm:main1}
For any family of potentials $\boldsymbol{\varphi}$ as in~\eqref{eq:22} the limit $P(\boldsymbol{\varphi})$ exists, moreover, 
\begin{equation}\label{eq:12}
P(\boldsymbol{\varphi}) = \max _{\sigma \in \mathcal{A}}\, \sup _{\mathbf{P}_{\!\sigma} \in \mathcal{Q}^{\sigma}}\, \sum_{n=1}^{d} 
C_{n}^{(d), \sigma}(\mathbf{P}_{\!\sigma}) \cdot\bigg(H(\mathbf{p}_{\sigma_{n}}) + \int\! \varphi_n^{\sigma}\,\mathrm{d}\mathbf{p}_{\sigma_n}\bigg).
\end{equation}
Let $t_{\sigma}(\mathbf{P}_{\!\sigma})=t(\mathbf{P}_{\!\sigma}) = t(\mathbf{p}_{\sigma_d};\ldots;\mathbf{p}_{\sigma_1})$ denote the sum in~\eqref{eq:12} for any $\mathbf{P}_{\!\sigma} \in \mathcal{P}^{\sigma}$.
\end{theorem}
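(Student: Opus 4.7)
The plan is to analyse the sum in~\eqref{eq:13} via a method-of-types argument applied to the symbolic coding~\eqref{eq:11}. For each fixed $\sigma \in \mathcal{A}_{\delta}$, every approximate cube $B_\delta(\ii) \in \mathcal{B}_\delta^\sigma$ is uniquely determined by the tuple of projected symbols $(\Pi_n^\sigma i_{L_{n+1}+1}, \ldots, \Pi_n^\sigma i_{L_n})_{n=1}^d$, so it is natural to partition $\mathcal{B}_\delta^\sigma$ according to the empirical frequencies $\mathbf{p}_{\sigma_n}$ of the symbols in each segment $\{L_{\delta}(\ii,\sigma_{n+1})+1, \ldots, L_{\delta}(\ii,\sigma_n)\}$.

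First I would fix $\sigma \in \mathcal{A}_\delta$ and a length profile $L_d \le L_{d-1}\le \cdots\le L_1$, and group cubes by their empirical frequency vector $\mathbf{P}_{\!\sigma}=(\mathbf{p}_{\sigma_d},\ldots,\mathbf{p}_{\sigma_1})$ (snapped to a $1/L_1$-grid). The number of cubes consistent with such a type is a product of multinomial coefficients, so by Stirling it is
\begin{equation*}
\exp\!\Bigl(\textstyle\sum_{n=1}^d (L_n-L_{n+1})\,H(\mathbf{p}_{\sigma_n})+o(L_1)\Bigr),
\end{equation*}
and on this class $\Phi(B_\delta(\ii))$ equals $\sum_{n=1}^d (L_n-L_{n+1})\!\int\!\varphi_n^\sigma\,\mathrm{d}\mathbf{p}_{\sigma_n}+o(L_1)$, uniformly.

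Next I would tie the length profile to $\delta$. Because exact overlap of $f_i,f_j$ on $E_m^\sigma$ preserves $\lambda_i^{(\sigma_n)}$ for all $n\le m$, the value $\lambda_{i_\ell}^{(\sigma_n)}$ depends on $i_\ell$ only through $\Pi_m^\sigma i_\ell$ for the segment $\ell\in(L_{m+1},L_m]$, $m\ge n$. Taking logarithms in~\eqref{eq:10} therefore yields the triangular system
\begin{equation*}
-\log\delta \;=\; \sum_{m=n}^{d}(L_m-L_{m+1})\,\chi_n^{\sigma}(\mathbf{p}_{\sigma_m})+O(1), \qquad n=1,\ldots,d.
\end{equation*}
Dividing by $|\log\delta|$ and solving recursively from $n=d$ downwards, the ratios $(L_n-L_{n+1})/|\log\delta|$ are forced to equal $C_n^{(d),\sigma}(\mathbf{P}_{\!\sigma})$ from~\eqref{eq:15}, and the constraints $L_n\ge L_{n+1}\ge 0$ for a realisable type become precisely the conditions $C_n^{(d),\sigma}(\mathbf{P}_{\!\sigma})\ge 0$ defining $\mathcal{Q}^\sigma$. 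Consequently, the contribution of a single type to~\eqref{eq:13} is $\delta^{-t_\sigma(\mathbf{P}_{\!\sigma})+o(1)}$.

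To conclude, I would combine the bounds. For the lower bound, choose a maximiser $(\sigma^\ast,\mathbf{P}_{\!\sigma^\ast})$ of the right-hand side of~\eqref{eq:12}; approximating it arbitrarily well on the type grid yields a single type contributing at least $\delta^{-t_{\sigma^\ast}(\mathbf{P}_{\!\sigma^\ast})+o(1)}$, so $\underline{P}(\boldsymbol{\varphi})\ge \max_\sigma\sup_{\mathcal{Q}^\sigma} t_\sigma$. For the upper bound, since $|\mathcal{A}_\delta|\le d!$ and the number of types per $\sigma$ is polynomial in $|\log\delta|$, the sum in~\eqref{eq:13} is dominated by its largest type contribution, yielding $\overline{P}(\boldsymbol{\varphi})\le \max_\sigma\sup_{\mathcal{Q}^\sigma} t_\sigma$. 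Both bounds match, so the limit exists and equals the claimed variational expression.

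The main obstacle I anticipate is the bookkeeping that links the discrete stopping indices to the continuous optimisation: one must show that the triangular system can be inverted uniformly over $\mathbf{P}_{\!\sigma}\in\mathcal{Q}^\sigma$, that the non-negativity of the ratios $C_n^{(d),\sigma}$ is both necessary and sufficient for the existence of a corresponding length profile, and that taking $\sigma$-orderings over $\mathcal{A}$ really exhausts all cubes (the equivalence $\sigma\in\mathcal{A}\Leftrightarrow \mathcal{Q}^\sigma\ne\emptyset$ foreshadowed by Lemma~\ref{lem:4} will be used here). Controlling these dependencies uniformly, while simultaneously tracking the potentials $\varphi_n^\sigma$ through the projections $\Pi_n^\sigma$, is where the argument requires the most care.
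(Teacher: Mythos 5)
Your proposal is correct and follows essentially the same route as the paper: grouping approximate cubes by their empirical types, using the method of types to count each type class, solving the triangular system from the $\delta$-stoppings to identify the block-length ratios with $C_n^{(d),\sigma}(\mathbf{P}_{\!\sigma})$ (whence the constraint set $\mathcal{Q}^\sigma$), and then exploiting the polynomial number of types for the upper bound and the density of realisable types in $\mathcal{Q}^\sigma$ (Lemma~\ref{lem:4}, together with the construction behind Lemma~\ref{lem:3}) for the lower bound. The points you flag as needing care are exactly those handled by Lemmas~\ref{lem:1}--\ref{lem:3} in the paper, so no genuinely different ideas are involved.
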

Formula~\eqref{eq:12} for $P(\boldsymbol{\varphi})$ clearly shows resemblance to the classical~\eqref{eq:16} and weighted~\eqref{eq:17} variational principle, but the differences are also apparent. Most notably, the supremum is taken over each coordinate separately for the different orderings rather than optimising over a single vector on $\mathcal{I}$ with its projections onto the subsets $\mathcal{I}_n^{\sigma}$. The interpretation of the formula is that for each $\sigma\in\mathcal{A}$ there is a \emph{dominant type} which `carries the pressure' for that ordering and determines the polynomial growth rate of $\sum_{B_{\delta}(\ii) \in \mathcal{B}_{\delta}^{\sigma}} \exp \left[\Phi\left(B_{\delta}(\ii)\right)\right]$. This rate is given by the sum in~\eqref{eq:12}, where for each coordinate $1\leq n\leq d$ the constant $C_{n}^{(d), \sigma}(\mathbf{P}_{\!\sigma})$ is related to the length of the block $\left(\Pi_{n}^{\sigma} i_{L_{\delta}(\ii, \sigma_{n+1})+1}, \ldots,\Pi_{n}^{\sigma} i_{L_{\delta}(\ii, \sigma_{n})}\right)$, which is where the restriction of $\mathbf{P}_{\!\sigma} \in \mathcal{Q}^{\sigma}$ comes into play. Furthermore, $H(\mathbf{p}_{\sigma_{n}})$ comes from the number of approximate cubes with this type and the `integral' is the contribution of $\boldsymbol{\varphi}$. Finally, the largest dominant type determines $P(\boldsymbol{\varphi})$.

If $\boldsymbol{\varphi}=\boldsymbol{0}$, i.e. $\varphi_n^{\sigma}\equiv 0$ for every $\sigma\in\mathcal{A}$ and $1\leq n\leq d$, then $P(\boldsymbol{0})$ gives the box dimension of any sponge satisfying the separation of principal projections condition, see Definition~\ref{def:SPPC} and Theorem~\ref{thm:Lqmain}. With another appropriate choice of $\boldsymbol{\varphi}$, see~\eqref{eq:38}, the pressure translates to the `symbolic' $L^q$ spectrum of $\widetilde{\nu}_{\boldsymbol{\mu}}$ which is then related to the actual $L^q$ spectrum of $\nu_{\boldsymbol{\mu}}$ under the same separation condition, see Theorem~\ref{thm:Lqmain}. We can thus see that the big advantage of this approach is that it unifies different arguments of numerous previous results and at the same time generalises them naturally to arbitrary dimensions.

For practical purposes, having a closed form formula for $P(\boldsymbol{\varphi})$ would be preferred over having to characterise the supremum over $\mathcal{Q}^{\sigma}$. We give a closed form which is always an upper bound for $P(\boldsymbol{\varphi})$ and equal to it in some instances. We define real numbers $T_0^{\sigma}\coloneqq 0, T_1^{\sigma},\ldots,T_d^{\sigma}$ recursively, where $T_n^{\sigma}=T_n^{(d),\sigma}(\boldsymbol{\varphi})$ is the unique solution to the equation
\begin{equation}\label{eq:14}
	\sum_{i\in\mathcal{I}_{n}^{\sigma}} \underbrace{ e^{\varphi_{n}^{\sigma}(i)} \prod_{\ell=1}^n \big( \lambda_i^{(\sigma_{\ell})} \big)^{T_{\ell}^{\sigma}-T_{\ell-1}^\sigma} }_{=:\, p_{\sigma_n}^{\ast}(i)} =1,
\end{equation}
and $\mathbf{P}_{\!\sigma}^{\ast}=(\mathbf{p}_{\sigma_d}^{\ast},\ldots,\mathbf{p}_{\sigma_1}^{\ast})\in \mathcal{P}^{\sigma}$, where $\mathbf{p}_{\sigma_n}^{\ast}=(p_{\sigma_n}^{\ast}(i))_{i\in\mathcal{I}_n^{\sigma}}$. 
\begin{prop}\label{prop:main}
For any $\sigma\in\mathcal{A}$ the supremum $\sup _{\mathbf{P}_{\!\sigma} \in \mathcal{P}^{\sigma}} t(\mathbf{P}_{\!\sigma}) = t(\mathbf{P}_{\!\sigma}^{\ast})=T_d^{\sigma}$.
\end{prop}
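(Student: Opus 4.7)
The plan is to derive a single identity that expresses $t(\mathbf{P}_{\!\sigma})$ as $T_d^{\sigma}$ minus a weighted sum of Kullback--Leibler divergences, and then to use this identity to identify the supremum. Taking logarithms in~\eqref{eq:14} produces the log-linear representation
\[
\log p_{\sigma_n}^{\ast}(i) \;=\; \varphi_n^{\sigma}(i) + \sum_{\ell=1}^n (T_\ell^{\sigma}-T_{\ell-1}^{\sigma})\, \log \lambda_i^{(\sigma_\ell)},
\]
and the recursion~\eqref{eq:15} rewrites as the identity $\sum_{m=\ell}^{d} C_m^{(d),\sigma}(\mathbf{P}_{\!\sigma})\, \chi_\ell^{\sigma}(\mathbf{p}_{\sigma_m}) = 1$, valid for every $1 \leq \ell \leq d$ and every $\mathbf{P}_{\!\sigma} \in \mathcal{P}^{\sigma}$. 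These two structural facts drive the entire argument.

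Setting $\Delta_\ell := T_\ell^{\sigma} - T_{\ell-1}^{\sigma}$, the log-linear representation reduces $H(\mathbf{p}_{\sigma_n}^{\ast}) + \int \varphi_n^{\sigma}\, \mathrm{d}\mathbf{p}_{\sigma_n}^{\ast}$ to $\sum_{\ell=1}^n \Delta_\ell\, \chi_\ell^{\sigma}(\mathbf{p}_{\sigma_n}^{\ast})$. Plugging this into the definition of $t$ and swapping the order of summation,
\[
t(\mathbf{P}_{\!\sigma}^{\ast}) \;=\; \sum_{\ell=1}^d \Delta_\ell \sum_{m=\ell}^d C_m^{(d),\sigma}(\mathbf{P}_{\!\sigma}^{\ast})\, \chi_\ell^{\sigma}(\mathbf{p}_{\sigma_m}^{\ast}) \;=\; \sum_{\ell=1}^d \Delta_\ell \;=\; T_d^{\sigma}.
\]

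Repeating the same calculation at a general $\mathbf{P}_{\!\sigma} \in \mathcal{P}^{\sigma}$, but now inserting the Kullback--Leibler expansion
\[
D(\mathbf{p}_{\sigma_n} \| \mathbf{p}_{\sigma_n}^{\ast}) \;=\; -H(\mathbf{p}_{\sigma_n}) - \int \varphi_n^{\sigma}\, \mathrm{d}\mathbf{p}_{\sigma_n} + \sum_{\ell=1}^n \Delta_\ell\, \chi_\ell^{\sigma}(\mathbf{p}_{\sigma_n}),
\]
delivers the \emph{key identity}
\[
t(\mathbf{P}_{\!\sigma}) \;=\; T_d^{\sigma} - \sum_{n=1}^d C_n^{(d),\sigma}(\mathbf{P}_{\!\sigma})\, D(\mathbf{p}_{\sigma_n} \| \mathbf{p}_{\sigma_n}^{\ast}).
\]
On $\mathcal{Q}^{\sigma}$, where every $C_n^{(d),\sigma}(\mathbf{P}_{\!\sigma}) \geq 0$, nonnegativity of KL divergence immediately yields $t(\mathbf{P}_{\!\sigma}) \leq T_d^{\sigma}$, with equality iff $\mathbf{P}_{\!\sigma} = \mathbf{P}_{\!\sigma}^{\ast}$.

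The main obstacle is extending this upper bound to $\mathbf{P}_{\!\sigma} \in \mathcal{P}^{\sigma} \setminus \mathcal{Q}^{\sigma}$: if some $C_n < 0$ the corresponding term $-C_n\, D(\mathbf{p}_{\sigma_n}\|\mathbf{p}_{\sigma_n}^{\ast})$ is nonnegative, so the bound does not follow directly. Since $\mathcal{P}^{\sigma}$ is compact (a product of simplices) and $t$ is continuous -- each $\chi_n^{\sigma}(\mathbf{p}_{\sigma_n}) \geq -\max_i \log \lambda_i^{(\sigma_n)} > 0$ keeps the denominators bounded away from zero -- the supremum is attained. A Lagrange-multiplier calculation based on the key identity shows that $\partial t/\partial p_{\sigma_k}(j)\big|_{\mathbf{P}_{\!\sigma}^{\ast}} = -C_k^{(d),\sigma}(\mathbf{P}_{\!\sigma}^{\ast})$ is constant in $j$, confirming $\mathbf{P}_{\!\sigma}^{\ast}$ is a critical point that attains the value $T_d^{\sigma}$. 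The hardest step will be ruling out any other critical point or boundary maximiser exceeding $T_d^{\sigma}$; this calls for tracking the cascaded effect of perturbing $\mathbf{p}_{\sigma_n}$ on the recursively defined coefficients $C_1, \ldots, C_n$, together with a reduction of the boundary strata $\{p_{\sigma_n}(i) = 0\}$ to lower-dimensional instances of the same proposition.
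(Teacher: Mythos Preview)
Your key identity $t(\mathbf{P}_{\!\sigma}) = T_d^{\sigma} - \sum_{n=1}^d C_n^{(d),\sigma}(\mathbf{P}_{\!\sigma})\, D(\mathbf{p}_{\sigma_n}\|\mathbf{p}_{\sigma_n}^{\ast})$ is correct --- it follows exactly as you say from the log-linear form of $\mathbf{p}_{\sigma_n}^{\ast}$ together with the telescoping relation $\sum_{m\ge\ell}C_m^{(d),\sigma}(\mathbf{P}_{\!\sigma})\,\chi_\ell^{\sigma}(\mathbf{p}_{\sigma_m})=1$ --- and it immediately yields both $t(\mathbf{P}_{\!\sigma}^{\ast})=T_d^{\sigma}$ and the upper bound on $\mathcal{Q}^{\sigma}$. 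But the proposal is explicitly unfinished: your final paragraph only \emph{announces} a strategy (Lagrange multipliers plus a reduction of the boundary faces $\{p_{\sigma_n}(i)=0\}$) for the region where some $C_n^{(d),\sigma}<0$, and does not carry it out. Since the proposition asks for the supremum over all of $\mathcal{P}^{\sigma}$, and the coefficients $C_n^{(d),\sigma}$ genuinely change sign there, this is the substantive part of the statement, and it is missing.

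The paper organises the argument differently and never invokes any global critical-point or boundary analysis. It proceeds by \emph{sequential replacement} (Lemmas~\ref{lem:fnsigma} and~\ref{lem:tpsigma}): starting from an arbitrary $(\mathbf{p}_{\sigma_d};\ldots;\mathbf{p}_{\sigma_1})\in\mathcal{P}^{\sigma}$, one substitutes $\mathbf{p}_{\sigma_1}\mapsto\mathbf{p}_{\sigma_1}^{\ast}$, then $\mathbf{p}_{\sigma_2}\mapsto\mathbf{p}_{\sigma_2}^{\ast}$, and so on. The inductive lemma shows that after the first $n-1$ substitutions, $t$ depends on $\mathbf{p}_{\sigma_n}$ only through a prefactor (involving $\mathbf{p}_{\sigma_{n+1}},\ldots,\mathbf{p}_{\sigma_d}$ alone) times the one-variable function
\[
f_n^{\sigma}(\mathbf{p}_{\sigma_n})=\frac{1}{\chi_n^{\sigma}(\mathbf{p}_{\sigma_n})}\Big(H(\mathbf{p}_{\sigma_n})+\!\int\!\varphi_n^{\sigma}\,\mathrm{d}\mathbf{p}_{\sigma_n}-\sum_{\ell=1}^{n-1}(T_\ell^{\sigma}-T_{\ell-1}^{\sigma})\,\chi_\ell^{\sigma}(\mathbf{p}_{\sigma_n})\Big),
\]
and a direct Kullback--Leibler computation shows $f_n^{\sigma}$ is maximised at $\mathbf{p}_{\sigma_n}^{\ast}$ with value $T_n^{\sigma}-T_{n-1}^{\sigma}$. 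From this the paper reads off the chain $t(\mathbf{P}_{\!\sigma})\le t(\mathbf{p}_{\sigma_d};\ldots;\mathbf{p}_{\sigma_2};\mathbf{p}_{\sigma_1}^{\ast})\le\cdots\le t(\mathbf{P}_{\!\sigma}^{\ast})=T_d^{\sigma}$. Your global identity packages the same Kullback--Leibler information in one line, whereas the paper distributes it across $d$ steps; what your write-up still needs is precisely an argument playing the role of this step-by-step monotonicity, to rule out that a negative $C_n$ ever lets $-C_n D(\mathbf{p}_{\sigma_n}\|\mathbf{p}_{\sigma_n}^{\ast})$ push $t$ above $T_d^{\sigma}$.
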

The proposition is proved in Section~\ref{sec:03}. Theorem~\ref{thm:main1} and Proposition~\ref{prop:main} imply the following. 
\begin{corollary}\label{cor:main}
The upper bound $P(\boldsymbol{\varphi}) \leq \max _{\sigma \in \mathcal{A}}\, T_d^{\sigma}$ holds for all $\boldsymbol{\varphi}$. If $\sigma\in\mathcal{A}$ is such that $\mathbf{P}_{\!\sigma}^{\ast}\in \mathcal{Q}^{\sigma}$, then $\sup _{\mathbf{P}_{\!\sigma} \in \mathcal{Q}^{\sigma}} t(\mathbf{P}_{\!\sigma}) = t(\mathbf{P}_{\!\sigma}^{\ast})=T_d^{\sigma}$. Furthermore, if $\omega\in\mathcal{A}$ is such that $\mathbf{P}_{\!\omega}^{\ast}\in \mathcal{Q}^{\omega}$ and $T_d^{\omega}=\max _{\sigma \in \mathcal{A}}\, T_d^{\sigma}$, then 
\begin{equation}\label{eq:18}
	P(\boldsymbol{\varphi}) = \max _{\sigma \in \mathcal{A}}\, T_d^{\sigma}.
\end{equation}
In particular, if $\mathcal{A}=\{\sigma\}$, then $P(\boldsymbol{\varphi}) = T_d^{\sigma}$. 
\end{corollary}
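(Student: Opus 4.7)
\emph{Strategy.} The corollary packages several consequences of Theorem~\ref{thm:main1} and Proposition~\ref{prop:main}. My plan is: (i) derive the upper bound from the inclusion $\mathcal{Q}^{\sigma}\subseteq\mathcal{P}^{\sigma}$; (ii) observe that whenever $\mathbf{P}_{\!\sigma}^{\ast}\in\mathcal{Q}^{\sigma}$, the supremum over the restricted set agrees with that over $\mathcal{P}^{\sigma}$; (iii) combine (i) and (ii) for the main equality; and (iv) show that the hypothesis of (iii) is automatic when $\mathcal{A}=\{\sigma\}$, by exploiting the monotonicity of contraction ratios that a single global ordering forces.

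\emph{Steps (i)--(iii).} For each $\sigma\in\mathcal{A}$, Proposition~\ref{prop:main} gives $\sup_{\mathbf{P}_{\!\sigma}\in\mathcal{P}^{\sigma}} t(\mathbf{P}_{\!\sigma})=T_d^{\sigma}$, so $\mathcal{Q}^{\sigma}\subseteq\mathcal{P}^{\sigma}$ yields $\sup_{\mathbf{P}_{\!\sigma}\in\mathcal{Q}^{\sigma}} t(\mathbf{P}_{\!\sigma})\leq T_d^{\sigma}$. Substituting this into the variational formula of Theorem~\ref{thm:main1} gives $P(\boldsymbol{\varphi})\leq \max_{\sigma\in\mathcal{A}} T_d^{\sigma}$. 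If moreover $\mathbf{P}_{\!\sigma}^{\ast}\in\mathcal{Q}^{\sigma}$, then the global maximiser of $t(\cdot)$ over $\mathcal{P}^{\sigma}$ already lies in $\mathcal{Q}^{\sigma}$, so $\sup_{\mathbf{P}_{\!\sigma}\in\mathcal{Q}^{\sigma}} t(\mathbf{P}_{\!\sigma}) = t(\mathbf{P}_{\!\sigma}^{\ast}) = T_d^{\sigma}$. For the third assertion, choose the $\omega$ provided by the hypothesis; then Theorem~\ref{thm:main1} gives $P(\boldsymbol{\varphi})\geq \sup_{\mathbf{P}_{\!\omega}\in\mathcal{Q}^{\omega}} t(\mathbf{P}_{\!\omega}) = T_d^{\omega} = \max_{\sigma\in\mathcal{A}} T_d^{\sigma}$, matching the upper bound.

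\emph{Step (iv) and the main obstacle.} The ``in particular'' claim is not a tautology: Theorem~\ref{thm:main1} reduces $P(\boldsymbol{\varphi})$ to $\sup_{\mathbf{P}_{\!\sigma}\in\mathcal{Q}^{\sigma}} t(\mathbf{P}_{\!\sigma})$, while Proposition~\ref{prop:main} identifies only the supremum over the larger set $\mathcal{P}^{\sigma}$, so one must confirm that $\mathbf{P}_{\!\sigma}^{\ast}\in\mathcal{Q}^{\sigma}$ when $\mathcal{A}=\{\sigma\}$. This is where I expect the real work. My plan is to use that $\mathcal{A}=\{\sigma\}$ forces every $\ii\in\Sigma$ to be $\sigma$-ordered at every scale; testing this against the constant sequences $\ii=(i,i,\ldots)$ and letting $\delta\to 0$ forces the uniform coordinatewise ordering $\lambda_{i}^{(\sigma_1)}\geq \lambda_{i}^{(\sigma_2)}\geq \ldots \geq \lambda_{i}^{(\sigma_d)}$ for every $i\in\mathcal{I}$. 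This yields $\chi_{n}^{\sigma}(\mathbf{p})\leq \chi_{m}^{\sigma}(\mathbf{p})$ whenever $n\leq m$, for any probability vector $\mathbf{p}$ on $\mathcal{I}_m^{\sigma}$.

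\emph{Positivity of the coefficients.} Rewriting the recursion~\eqref{eq:15} gives the telescoping identity
\begin{equation*}
\sum_{m=n}^{d} C_{m}^{(d),\sigma}(\mathbf{P}_{\!\sigma})\, \chi_{n}^{\sigma}(\mathbf{p}_{\sigma_m}) = 1 \quad \text{for every } 1\leq n\leq d.
\end{equation*}
I would argue by descending induction on $n$. The base $C_d^{(d),\sigma}=1/\chi_d^{\sigma}(\mathbf{p}_{\sigma_d})>0$ is immediate. Assuming $C_m^{(d),\sigma}\geq 0$ for $m>n$, the identity at level $n+1$ reads $\sum_{m=n+1}^{d} C_{m}^{(d),\sigma}\, \chi_{n+1}^{\sigma}(\mathbf{p}_{\sigma_m}) = 1$; monotonicity $\chi_{n}^{\sigma}\leq \chi_{n+1}^{\sigma}$ together with $C_m^{(d),\sigma}\geq 0$ therefore forces $\sum_{m=n+1}^{d} C_{m}^{(d),\sigma}\, \chi_{n}^{\sigma}(\mathbf{p}_{\sigma_m}) \leq 1$, so the numerator defining $C_{n}^{(d),\sigma}$ in~\eqref{eq:15} is non-negative. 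Hence $\mathcal{Q}^{\sigma}=\mathcal{P}^{\sigma}$ in this case, in particular $\mathbf{P}_{\!\sigma}^{\ast}\in\mathcal{Q}^{\sigma}$, and the previous assertion delivers $P(\boldsymbol{\varphi})=T_{d}^{\sigma}$. Beyond this coefficient-positivity induction, everything else is just bookkeeping on top of Theorem~\ref{thm:main1} and Proposition~\ref{prop:main}.
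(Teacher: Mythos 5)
Your proposal is correct, and parts (i)--(iii) coincide with the paper's argument: the upper bound is exactly Proposition~\ref{prop:main} combined with $\mathcal{Q}^{\sigma}\subseteq\mathcal{P}^{\sigma}$, and the two conditional equalities are direct bookkeeping on top of Theorem~\ref{thm:main1}. The difference is in the ``in particular'' claim, which is indeed the only step with real content. The paper disposes of it in one line: if $\mathcal{A}=\{\sigma\}$ then Lemma~\ref{lem:4} gives $\mathcal{Q}^{\omega}=\emptyset$ for every $\omega\neq\sigma$, and from this the author asserts $\mathcal{Q}^{\sigma}=\mathcal{P}^{\sigma}$, leaving the implication implicit. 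You instead verify $\mathcal{Q}^{\sigma}=\mathcal{P}^{\sigma}$ directly: you recover the coordinate ordering condition~\eqref{eq:21} from $\mathcal{A}=\{\sigma\}$ (the paper records this equivalence as immediate just below the corollary), deduce the monotonicity $\chi_n^{\sigma}(\mathbf{p})\leq\chi_m^{\sigma}(\mathbf{p})$ for $n\leq m$, and then run a descending induction on the telescoping identity $\sum_{m=n}^{d}C_m^{(d),\sigma}(\mathbf{P}_{\!\sigma})\,\chi_n^{\sigma}(\mathbf{p}_{\sigma_m})=1$, which is just~\eqref{eq:15} rearranged, to conclude $C_n^{(d),\sigma}(\mathbf{P}_{\!\sigma})\geq 0$ for every $\mathbf{P}_{\!\sigma}\in\mathcal{P}^{\sigma}$. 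This induction is sound (the base case is $C_d^{(d),\sigma}=1/\chi_d^{\sigma}(\mathbf{p}_{\sigma_d})>0$, and the inductive step only uses non-negativity of the later coefficients together with the monotonicity of the Lyapunov exponents), so your argument is a complete, self-contained substitute for the paper's terse appeal to Lemma~\ref{lem:4}: it makes explicit why a single ordering forces non-negativity of all the coefficients, at the cost of a slightly longer write-up, whereas the paper's route is shorter but leaves the step from ``$\mathcal{Q}^{\omega}=\emptyset$ for $\omega\neq\sigma$'' to ``$\mathcal{Q}^{\sigma}=\mathcal{P}^{\sigma}$'' to the reader. Either way one gets $\mathbf{P}_{\!\sigma}^{\ast}\in\mathcal{Q}^{\sigma}$ and hence $P(\boldsymbol{\varphi})=T_d^{\sigma}$.
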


It is immediate that $\#\mathcal{A}=1$ if and only if there is a $\sigma\in\mathcal{S}_d$ such that 
\begin{equation}\label{eq:21}
	0<\lambda_i^{(\sigma_d)}\leq \lambda_i^{(\sigma_{d-1})}\leq \ldots\leq \lambda_i^{(\sigma_1)}<1 \;\text{ for every } i\in\mathcal{I}.
\end{equation}
In this case we say that the sponge $F$ satisfies the \emph{coordinate ordering condition with ordering $\sigma$}. 

The value of $T_d^{\sigma}$ can be calculated by numerically solving the $d$ equations in~\eqref{eq:14}. If $\mathbf{P}_{\!\sigma}^{\ast}\in \mathcal{Q}^{\sigma}$, then $\mathbf{P}_{\!\sigma}^{\ast}$ is the dominant type for that particular ordering $\sigma\in\mathcal{A}$. However, if $\mathbf{P}_{\!\sigma}^{\ast}\notin \mathcal{Q}^{\sigma}$, then characterising the dominant type is a difficult non-linear optimisation problem with non-linear constraint. It is also not clear how $\sup _{\mathbf{P}_{\!\sigma} \in \mathcal{Q}^{\sigma}} t(\mathbf{P}_{\!\sigma})$ and $\sup _{\mathbf{P}_{\!\omega} \in \mathcal{Q}^{\omega}} t(\mathbf{P}_{\!\omega})$ relate to each other for two different orderings $\sigma,\omega\in\mathcal{A}$. Nevertheless, the dominant type which gives the value of $P(\boldsymbol{\varphi})$ can be thought of as the `equilibrium state' of the system. Getting a better understanding of when $\mathbf{P}_{\!\sigma}^{\ast}\in \mathcal{Q}^{\sigma}$ seems a subtle issue and is a natural direction for further study.

\begin{question}\label{ques:2}
Are there further easy to check sufficient and/or necessary conditions for $\mathbf{P}_{\!\sigma}^{\ast}\in \mathcal{Q}^{\sigma}$? More broadly, when does~\eqref{eq:18} hold? If $\mathbf{P}_{\!\sigma}^{\ast}\notin \mathcal{Q}^{\sigma}$, then is the supremum over $\mathcal{Q}^{\sigma}$ attained on the boundary of $\mathcal{Q}^{\sigma}$ (where $C_n^{(d),\sigma}(\cdot)=0$ for at least one $n\in\{1,\ldots,d\}$)? 
\end{question}

\begin{example}\label{ex:SelfSim}
The self-affine sponge $F$ is a \emph{self-similar set}, if for each $i\in\mathcal{I}$ there is $\lambda_i\in(0,1)$ such that $\lambda_i^{(n)}=\lambda_i$ for all $1\leq n\leq d$. Clearly, $L_{\delta}(\ii,n)=L_{\delta}(\ii,m)$ for all $1\leq  n\leq m\leq d$, so $\mathcal{A}=\{\mathrm{Id}\}$. Let $L_{\delta}(\ii)$ denote this common value. We have $B_{\delta}(\ii) = (i_1,\ldots,i_{L_{\delta}(\ii)})\in\mathcal{I}^{L_{\delta}(\ii)}$ and $\Phi\left(B_{\delta}(\ii)\right)= 
\sum_{\ell=1}^{L_{\delta}(\ii)} \varphi_{d}\left(i_{\ell}\right)$. Moreover, $\chi_{1}(\mathbf{p}_n)=\chi_{2}(\mathbf{p}_n)=\ldots=\chi_{n}(\mathbf{p}_n)$ for all $1\leq n\leq d$ giving $C_d^{(d)}(\mathbf{P})=1/\chi_d(\mathbf{p}_d)$ and $C_n^{(d)}(\mathbf{P})=0$ for all $1\leq n\leq d-1$. As a result, ~\eqref{eq:12} simplifies to
\begin{equation*}
P(\boldsymbol{\varphi}) = \sup _{\mathbf{p} \in \mathcal{P}_d}\, 
\frac{H(\mathbf{p}) + \int\! \varphi_d\,\mathrm{d}\mathbf{p}}{\chi_d(\mathbf{p})}.
\end{equation*}
Also, writing out~\eqref{eq:14} for $n=d$, we obtain
$\sum_{i\in\mathcal{I}} e^{\varphi_{d}(i)} \lambda_i^{T_d} =1$.
If $F$ satisfies the open set condition, i.e. $f_i((0,1)^d)\cap f_j((0,1)^d) = \emptyset$ for all $i\neq j$, then by taking $\boldsymbol{\varphi}=\boldsymbol{0}$, we recover the well-known fact that $\dim_{\mathrm B}F=T_d$, often called the \emph{similarity dimension}, which has the equivalent characterisation of maximising `entropy over Lyapunuv exponent'. For a fixed probability vector $\boldsymbol{\mu}$ on $\mathcal{I}$ and $q\in\R$ if $\varphi_{d}(i)=q\log \mu(i)$, then $T_d=T_d(\boldsymbol{\mu},q)$ is the $L^q$ spectrum of the self-similar measure $\nu_{\boldsymbol{\mu}}$.  
\end{example}

\subsubsection*{Main idea of proof} The key observation is that $\Phi(B_{\delta}(\ii))$ does not depend directly on the order of symbols in the symbolic representation~\eqref{eq:11} of $B_{\delta}(\ii)$, but rather just on the number of times a particular symbol $i\in\mathcal{I}_n^{\sigma}$ appears in the block $\left(\Pi_{n}^{\sigma} i_{L_{\delta}(\ii, \sigma_{n+1})+1}, \ldots,\Pi_{n}^{\sigma} i_{L_{\delta}(\ii, \sigma_{n})}\right)$. Therefore, we use digit frequencies to express $\Phi(B_{\delta}(\ii))$ and the `method of types' to count the number of different approximate cubes with given digit frequencies. As $\delta\to 0$, the set of different types becomes dense in the parameter space $\mathcal{Q}^{\sigma}$ ($\sigma\in\mathcal{A}$), however, the rate of growth of the number of different types is significantly smaller compared to the cardinality of a type. Hence, there is a type which `carries the pressure' at each scale $\delta$ and these types converge to the dominant type given by the variational principle~\eqref{eq:12}. While the general scheme is certainly not new in the dimension theory of dynamical systems, we are unaware of such a streamlined application in the context of determining box counting quantities.

\section{Application to the \texorpdfstring{$L^q$}{Lq} spectrum of self-affine sponges}\label{sec:02}

We begin by introducing the separation condition required for most of our bounds.
\begin{definition}\label{def:SPPC}
A self-affine sponge $F\subset [0,1]^d$ satisfies the \emph{separation of principal projections condition} (SPPC) if for every $\sigma\in\mathcal{A}$, $1\leq n\leq d$ and $i,j\in\mathcal{I}$,
\begin{equation}\label{eq:20}
	\text{either } f_i \text{ and } f_j \text { overlap exactly on } E_n^{\sigma} \text{ or } \Pi_n^{\sigma}\big(f_i((0,1)^d)\big)\cap \Pi_n^{\sigma}\big(f_j((0,1)^d)\big) = \emptyset.
\end{equation}
The sponge satisfies the very strong SPPC if $(0,1)^d$ can be replaced with $[0,1]^d$ in~\eqref{eq:20}.
\end{definition}
If~\eqref{eq:20} is only assumed for $n=d$, the rather weaker condition is known as the \emph{rectangular open set condition} in~\cite{FengWang2005, Fraser12Boxlike, Fraser_TAMS2016}. In particular, if $F$ is a self-similar set, recall Example~\ref{ex:SelfSim}, then the SPPC is equivalent to assuming~\eqref{eq:20} only for $n=d$. The SPPC was introduced simultaneously in~\cite{FK_DimAMeasure_arxiv} where the Assouad and lower dimensions of the self-affine measure $\nu_{\boldsymbol{\mu}}$ were studied. In that case assuming the very strong SPPC is necessary while for all results in this paper the SPPC suffices. 

\begin{example}\label{ex:HigherDImCarpets}
The following are the natural generalisations of Bara\'nski~\cite{BARANSKIcarpet_2007}, Lalley--Gatzouras~\cite{GatzourasLalley92} and Bedford--McMullen~\cite{Bedford84_phd, mcmullen84} carpets to higher dimensions. Assume that $0<a_i^{(n)}<1$ for all $1\leq n\leq d$ and $i\in\mathcal{I}$.
\begin{enumerate}
\item A \emph{Bara\'nski sponge} $F\subset[0,1]^d$ satisfies that for all $\sigma\in\mathcal{S}_d$,
\begin{equation*}
	\text{either } f_i \text{ and } f_j \text {overlap exactly on } E_1^{\sigma} \text{ or } \Pi_1^{\sigma}\big(f_i((0,1)^d)\big)\cap \Pi_1^{\sigma}\big(f_j((0,1)^d)\big) = \emptyset.
\end{equation*}
In other words, the IFSs generated on the coordinate axes by indices $\mathcal{I}_1^{\sigma}$ satisfy the open set condition. This clearly implies the SPPC.
\item A \emph{Lalley--Gatzouras sponge} $F\subset[0,1]^d$ satisfies the SPPC and the coordinate ordering condition~\eqref{eq:21} for some $\sigma\in\mathcal{S}_d$, hence, $\mathcal{A}=\{\sigma\}$.
\item A \emph{Bedford--McMullen sponge} $F\subset[0,1]^d$ is a Bara\'nski sponge which satisfies the coordinate ordering condition (hence, is also a Lalley--Gatzouras sponge) and
\begin{equation*}
	\lambda_1^{(n)} = \lambda_2^{(n)} =\ldots = \lambda_N^{(n)} \;\text{ for all } 1\leq n\leq d.
\end{equation*}
\end{enumerate}
\end{example}

On the plane either $\#\mathcal{A}=1$ or $\#\mathcal{A}=2$, hence, the SPPC combines Lalley--Gatzouras (when $\#\mathcal{A}=1$) and (genuine) Bara\'nski carpets (when $\#\mathcal{A}=2$) into a unified framework in a natural way. Moreover, for dimensions $d\geq 3$ it is a wider class of sponges than simply the union of the Bara\'nski and Lalley--Gatzouras class. We give one example here and refer the interested reader to~\cite[Section~4]{FK_DimAMeasure_arxiv} for a complete characterisation of the sponges satisfying the SPPC in three dimensions. Assume for all $i\in\mathcal{I}$ that $0<\max\{ a_i^{(y)}, a_i^{(z)}\} < a_i^{(x)}<1$ and there exist $j,k\in\mathcal{I}$ such that $a_j^{(y)}< a_j^{(z)}$ and $a_k^{(y)}> a_k^{(z)}$. In this case it is easy to see that $\mathcal{A}=\{(x,y,z),(x,z,y)\}$, moreover, the projection onto both the $xy$ and $xz$-plane is a Lalley--Gatzouras carpet with $x$ being the dominant side. Projection onto $yz$-plane does not play a role.

\subsection{Results for \texorpdfstring{$L^q$}{Lq} spectrum}\label{sec:ResLq}

We define the family of potentials which leads us to the $L^q$ spectrum of self-affine measures. Let $\boldsymbol{\mu}=\big( \mu(i)\big)_{i\in\mathcal{I}}$ be a probability vector on $\mathcal{I}$ with strictly positive entries. For $\sigma\in\mathcal{A}$ and $1\leq n\leq d$, we define its `projection' to $\mathcal{I}_n^{\sigma}$ to be
\begin{equation}\label{eq:300}
\boldsymbol{\mu}_n^{\sigma}\coloneqq \big( \mu_n^{\sigma}(i) \big)_{i\in\mathcal{I}_n^{\sigma}}, \;\text{ where } \mu_n^{\sigma}(i)\coloneqq \sum_{j\in\mathcal{I}:\, \Pi_n^{\sigma}j=i}\mu(j).
\end{equation}
Hence, $\boldsymbol{\mu}_n^{\sigma}\in\mathcal{P}_n^{\sigma}$. For $q\in\mathbb{R}$, we introduce the family of potentials
\begin{equation}\label{eq:38}
\boldsymbol{\psi}_q^{\boldsymbol{\mu}} \coloneqq \{ \psi_{q,n}^{\boldsymbol{\mu},\sigma}:\, \mathcal{I}_n^{\sigma}\to \mathbb{R}\, |\, \sigma\in\mathcal{A},\, 1\leq n\leq d\}, \;\text{ where } \psi_{q,n}^{\boldsymbol{\mu},\sigma}(i) \coloneqq q\cdot \log \mu_n^{\sigma}(i).
\end{equation} 
It follows from Theorem~\ref{thm:main1} that the limit $P(\boldsymbol{\psi}_q^{\boldsymbol{\mu}})$ exists for all $\boldsymbol{\mu}$ and $q\in\mathbb{R}$. We prove in Lemma~\ref{lem:MeasureSymbCube} that with this choice $\exp \left[\Phi\left(B_{\delta}(\ii)\right)\right]=\big(\widetilde{\nu}_{\boldsymbol{\mu}}\left(B_{\delta}(\ii)\right)\big)^q$ for any approximate cube. Translating this to the $L^q$ spectrum of $\nu_{\boldsymbol{\mu}}$ leads us to our main result.

\begin{theorem}\label{thm:Lqmain}
Let $\nu_{\boldsymbol{\mu}}$ be a self-affine measure on the self-affine sponge $F$ which satisfies the SPPC. Then
\begin{equation*}
T(\nu_{\boldsymbol{\mu}},q) =  P(\boldsymbol{\psi}_q^{\boldsymbol{\mu}}) \quad\text{ for all } q\in\mathbb{R}.
\end{equation*}
In particular, the box dimension of $F$ exists and $\dim_{\mathrm B}F = \dim_{\mathrm P}F =P(\boldsymbol{\psi}_0^{\boldsymbol{\mu}})$.
\end{theorem}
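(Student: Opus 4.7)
My plan is to reduce the theorem to a purely symbolic statement and then invoke Theorem~\ref{thm:main1}. First, using Lemma~\ref{lem:MeasureSymbCube} together with the product structure of the Bernoulli measure $\widetilde{\nu}_{\boldsymbol{\mu}} = \boldsymbol{\mu}^{\mathbb{N}}$ and the coordinate-wise definition of the projections $\Pi_n^{\sigma}$, the particular potentials $\boldsymbol{\psi}_q^{\boldsymbol{\mu}}$ in~\eqref{eq:38} satisfy $\exp\!\left[\Phi(B_\delta(\ii))\right] = \big(\widetilde{\nu}_{\boldsymbol{\mu}}(B_\delta(\ii))\big)^q$ for every symbolic approximate cube. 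The SPPC ensures that for each $\sigma\in\mathcal{A}_\delta$ and any two distinct $B,B'\in\mathcal{B}_\delta^\sigma$ the images $\pi(B),\pi(B')\subseteq F$ overlap only on boundaries of cylinders, which carry no $\nu_{\boldsymbol{\mu}}$-mass. Hence $\widetilde{\nu}_{\boldsymbol{\mu}}(B_\delta(\ii))=\nu_{\boldsymbol{\mu}}(\pi(B_\delta(\ii)))$ and
\[
	\sum_{\sigma\in\mathcal{A}_\delta}\sum_{B\in\mathcal{B}_\delta^\sigma}\exp[\Phi(B)] \;=\; \sum_{\sigma\in\mathcal{A}_\delta}\sum_{B\in\mathcal{B}_\delta^\sigma}\nu_{\boldsymbol{\mu}}(\pi(B))^q,
\]
so it suffices to show that this last sum is comparable to $T_\delta(\nu_{\boldsymbol{\mu}},q)$ up to multiplicative constants independent of $\delta$.

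Next I would pin down the geometry of $\pi(B_\delta(\ii))$. By the definition of the $\sigma$-ordered stopping in~\eqref{eq:10}, $\pi(B_\delta(\ii))$ is contained in an axis-parallel cuboid whose side length in each direction is at most $\delta$; combining~\eqref{eq:19} with the SPPC and iterating an appropriate map of the IFS inside $B_\delta(\ii)$, one shows this cuboid also contains a concentric cuboid of side at least $c\delta$ in every coordinate direction, for a constant $c=c(F)>0$. Consequently $\pi(B_\delta(\ii))$ is sandwiched between a ball of radius $c\delta/2$ and a ball of radius $\sqrt{d}\,\delta$, both centred in $F$. A standard volume argument then yields constants $M_1,M_2$, depending only on $d$ and $c$, so that every ball $B(x,\delta)$ with $x\in F$ meets at most $M_1$ symbolic approximate cubes, and every approximate cube contains the centres of at most $M_2$ balls of any centred $\delta$-packing.

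For $q\geq 0$, combining the two bounded-multiplicity statements with the elementary power-sum inequality $(a_1+\cdots+a_k)^q \leq k^{\max(q-1,0)}(a_1^q+\cdots+a_k^q)$ gives constants $c_1(q),c_2(q)>0$ with
\[
	c_1(q)\sum_{\sigma}\sum_B \nu_{\boldsymbol{\mu}}(\pi(B))^q \;\leq\; T_\delta(\nu_{\boldsymbol{\mu}},q) \;\leq\; c_2(q)\sum_{\sigma}\sum_B \nu_{\boldsymbol{\mu}}(\pi(B))^q,
\]
where the upper bound uses that each ball in a packing is covered by $O(1)$ approximate cubes, and the lower bound uses that each approximate cube is covered by $O(1)$ balls from an optimising packing (or generates one). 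For $q<0$ the monotonicity of $x\mapsto x^q$ reverses, and the same double inclusion provides the two-sided comparison: each $B(x,\delta)$ with $x\in F$ has $\nu_{\boldsymbol{\mu}}$-measure at least the mass of the largest approximate cube it meets, while each approximate cube contains an inscribed ball of radius $\geq c\delta/2$ centred in $F$ whose $\nu_{\boldsymbol{\mu}}$-mass is at least that of the cube. Applying $(-\log\delta)^{-1}\log$ and letting $\delta\to 0$ the multiplicative constants disappear; since Theorem~\ref{thm:main1} guarantees that $P(\boldsymbol{\psi}_q^{\boldsymbol{\mu}})$ exists as a genuine limit, so does $T(\nu_{\boldsymbol{\mu}},q)$, and the two coincide. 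Setting $q=0$ gives $\dim_{\mathrm B}F = P(\boldsymbol{\psi}_0^{\boldsymbol{\mu}})$; the equality $\dim_{\mathrm P}F=\dim_{\mathrm B}F$ follows from the standard Baire-category argument for self-affine sets, using that any countable closed cover of $F$ contains a piece with nonempty relative interior, hence contains an affine copy $f_{\mathbf i}(F)$ of full box dimension.

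\textbf{Main obstacle.}
The delicate step is the $q<0$ case, where the sum $T_\delta(\nu_{\boldsymbol{\mu}},q)$ is dominated by balls of tiny measure, so a uniform lower bound on $\nu_{\boldsymbol{\mu}}(B(x,\delta))$ in terms of the mass of an enclosing approximate cube is indispensable. Producing, for every approximate cube, an \emph{inscribed} ball centred in $F$ of radius $\gtrsim \delta$ while keeping multiplicity uniformly bounded in $\delta$ is precisely where the hypothesis~\eqref{eq:19} and the fact that the SPPC applies to every $\sigma\in\mathcal A$ (not just $\sigma=\mathrm{Id}$) both come into play; this is also the point at which the weaker SPPC suffices and the very strong SPPC of~\cite{FK_DimAMeasure_arxiv} is not needed.
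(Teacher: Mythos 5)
Your reduction to the symbolic sum via Lemma~\ref{lem:MeasureSymbCube}, and your treatment of $q\geq 0$ (bounded multiplicity of approximate cubes meeting a ball, subadditivity/Jensen for the upper bound, a bounded-degree selection of disjoint balls for the lower bound), follow the paper's route and are essentially sound. The genuine gap is exactly at the point you flag as the ``main obstacle'': your claim that for \emph{every} approximate cube $B_\delta(\ii)$ the surrounding cuboid contains a ball of radius $c(F)\delta$ centred at a point of $\pi(B_\delta(\ii))$ is false in general, and the paper explicitly says so just after~\eqref{eq:65}. The reason is that the symbols of $\ii$ between the stopping times $L_\delta(\ii,\sigma_n)$ and $L_\delta(\ii,\sigma_1)$ are only constrained through the projections $\Pi_m^{\sigma}$, $m<n$; if the relevant classes are singletons (no exact overlaps) and these symbols push towards a face, then \emph{every} point of $\pi(B_\delta(\ii))$ lies within $o(\delta)$ of a face of its defining cylinders in the $\sigma_n$ direction. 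No choice of centre inside that cube then works: a ball of radius $c\delta$ necessarily leaks into neighbouring cubes, which destroys both the disjointness of your would-be packing and the inequality $\nu_{\boldsymbol{\mu}}(\text{ball})\lesssim\nu_{\boldsymbol{\mu}}(\pi(B_\delta(\ii)))$ that the $q<0$ lower bound hinges on. Your phrase ``iterating an appropriate map of the IFS inside $B_\delta(\ii)$'' does not rescue this: applying the maps $k_u^{(n)}$ from~\eqref{eq:19} changes the word, i.e.\ moves you to a \emph{different} approximate cube, and that replacement is precisely what has to be controlled.

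This is the content of the paper's perturbation $\alpha_\delta$ in~\eqref{eq:73} and Lemma~\ref{lem:BallInCube}, for which your proposal has no counterpart: one must (i) show the perturbed cube carries comparable $\widetilde{\nu}_{\boldsymbol{\mu}}$-mass, which is not automatic because the insertions can slightly change the $\sigma$-ordering (this is~\eqref{eq:72} and~\eqref{eq:70}); (ii) show the inserted symbols force the image uniformly away from the faces so that an inscribed ball of radius $C_0\delta$ exists (\eqref{eq:71}); and (iii) use injectivity of $\alpha_\delta$ together with the SPPC so that the balls attached to the cubes of the dominant type still form a centred packing of essentially undiminished cardinality. Without these steps your claimed two-sided comparison $c_1(q)\sum_B\nu_{\boldsymbol{\mu}}(\pi(B))^q\leq T_\delta(\nu_{\boldsymbol{\mu}},q)\leq c_2(q)\sum_B\nu_{\boldsymbol{\mu}}(\pi(B))^q$ is unproved for $q<0$ (the upper bound for $q<0$ is fine, but should be phrased via a cube \emph{contained} in each packing ball as in~\eqref{eq:65}, not ``the largest cube it meets''). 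Also note that, read literally, $\pi(B_\delta(\ii))$ cannot contain any ball since $F$ typically has empty interior; the statement must be about the surrounding cuboid, and even in that form it fails without the perturbation. The remaining parts (the $q=0$ specialisation and $\dim_{\mathrm P}F=\dim_{\mathrm B}F$) are fine.
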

\begin{remark}\label{rem:upperbound}
Observe from~\eqref{eq:38} that $\boldsymbol{\psi}_0^{\boldsymbol{\mu}}$ is independent of the choice of $\boldsymbol{\mu}$. The box and packing dimensions are equal because $F$ is compact and every open set intersecting $F$ contains a bi-Lipschitz image of $F$, see~\cite[Corollary 3.9]{FalconerBook}.

The theorem also gives a clear indication of how the $L^q$ spectrum can be non-differentiable at a point $\hat{q}$: the maximum in~\eqref{eq:12} is attained for a different $\sigma\in\mathcal{A}$ when $q\to\hat{q}^-$ than when $q\to\hat{q}^+$.  
\end{remark}

Adapting~\eqref{eq:14}, we define functions $T_n^{\boldsymbol{\mu},\sigma}(q):\mathbb{R}\to\mathbb{R}$ for $0\leq n\leq d$ recursively, by first setting $T_0^{\boldsymbol{\mu},\sigma}(q)\equiv 0$ and then defining  $T_n^{\boldsymbol{\mu},\sigma}(q)$ to be the unique solution to the equation
\begin{equation}\label{eq:31}
\sum_{i\in\mathcal{I}_{n}^{\sigma}} (\mu_n^{\sigma}(i))^q \prod_{\ell=1}^n \big( \lambda_i^{(\sigma_{\ell})} \big)^{T_{\ell}^{\boldsymbol{\mu},\sigma}(q)- T_{\ell-1}^{\boldsymbol{\mu},\sigma}(q)} =1.
\end{equation}
Combining Theorem~\ref{thm:Lqmain} and Corollary~\ref{cor:main} immediately give the following two statements.

\begin{corollary}\label{cor:Lqmain}
Let $\nu_{\boldsymbol{\mu}}$ be a self-affine measure on the self-affine sponge $F\subset \mathbb{R}^d$ that satisfies the SPPC. Then 
\begin{equation*}
T(\nu_{\boldsymbol{\mu}},q) \leq \max _{\sigma \in \mathcal{A}}\, T_d^{\boldsymbol{\mu},\sigma}(q) \quad\text{ for all } q\in\mathbb{R}.
\end{equation*}
\end{corollary}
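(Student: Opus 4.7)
The plan is to obtain the inequality by composing two results already established in the excerpt, so that the proof is essentially a substitution. First I would invoke Theorem~\ref{thm:Lqmain}, which identifies the $L^q$ spectrum of $\nu_{\boldsymbol{\mu}}$ with the pressure of the specific family of potentials $\boldsymbol{\psi}_q^{\boldsymbol{\mu}}$, giving the equality $T(\nu_{\boldsymbol{\mu}},q) = P(\boldsymbol{\psi}_q^{\boldsymbol{\mu}})$ for every $q\in\mathbb{R}$.

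Next I would apply Corollary~\ref{cor:main}, which provides the universal upper bound $P(\boldsymbol{\varphi}) \leq \max_{\sigma\in\mathcal{A}} T_d^{\sigma}$ for \emph{any} family of potentials $\boldsymbol{\varphi}$ as in~\eqref{eq:22}, where the numbers $T_d^{\sigma}=T_d^{(d),\sigma}(\boldsymbol{\varphi})$ are defined by the implicit equation~\eqref{eq:14}. Applying this with $\boldsymbol{\varphi}=\boldsymbol{\psi}_q^{\boldsymbol{\mu}}$ yields $P(\boldsymbol{\psi}_q^{\boldsymbol{\mu}})\leq \max_{\sigma\in\mathcal{A}} T_d^{(d),\sigma}(\boldsymbol{\psi}_q^{\boldsymbol{\mu}})$.

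The one remaining check is that $T_d^{(d),\sigma}(\boldsymbol{\psi}_q^{\boldsymbol{\mu}})$ agrees with the quantity $T_d^{\boldsymbol{\mu},\sigma}(q)$ defined by~\eqref{eq:31}. Substituting $\psi_{q,n}^{\boldsymbol{\mu},\sigma}(i)=q\log\mu_n^{\sigma}(i)$ into~\eqref{eq:14} replaces $e^{\varphi_n^{\sigma}(i)}$ with $(\mu_n^{\sigma}(i))^q$, so the defining equation for $T_n^{\sigma}$ becomes exactly the defining equation~\eqref{eq:31} for $T_n^{\boldsymbol{\mu},\sigma}(q)$, and by uniqueness the two sequences coincide for each $\sigma\in\mathcal{A}$. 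Chaining the two steps then gives the stated bound.

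There is essentially no obstacle here: the statement is a direct corollary, and all the analytical content (existence of $P(\boldsymbol{\varphi})$, the variational upper bound via $\mathbf{P}_{\!\sigma}^{\ast}$, and the identification of the pressure with the $L^q$ spectrum) has already been done in Theorem~\ref{thm:main1}, Proposition~\ref{prop:main}, Corollary~\ref{cor:main}, and Theorem~\ref{thm:Lqmain}. The only point worth spelling out carefully is the verification that the implicitly defined numbers coming from $\boldsymbol{\psi}_q^{\boldsymbol{\mu}}$ match those in~\eqref{eq:31}, since the paper uses slightly different notation in the two places.
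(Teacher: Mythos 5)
Your proposal is correct and matches the paper's own argument: the paper states that Corollary~\ref{cor:Lqmain} follows immediately by combining Theorem~\ref{thm:Lqmain} with Corollary~\ref{cor:main}, which is exactly your chain $T(\nu_{\boldsymbol{\mu}},q)=P(\boldsymbol{\psi}_q^{\boldsymbol{\mu}})\leq \max_{\sigma\in\mathcal{A}}T_d^{\sigma}$. Your extra check that substituting $\psi_{q,n}^{\boldsymbol{\mu},\sigma}(i)=q\log\mu_n^{\sigma}(i)$ into~\eqref{eq:14} reproduces~\eqref{eq:31}, so that $T_d^{(d),\sigma}(\boldsymbol{\psi}_q^{\boldsymbol{\mu}})=T_d^{\boldsymbol{\mu},\sigma}(q)$, is precisely the (implicit) identification the paper relies on.
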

A better understanding of Question~\ref{ques:2} would have direct implications on when $T(\nu_{\boldsymbol{\mu}},q) = \max _{\sigma \in \mathcal{A}}\, T_d^{\boldsymbol{\mu},\sigma}(q)$. Nevertheless, our results for the Lalley--Gatzouras class are more complete.
\begin{corollary}\label{cor:LGsponge}
If $F$ is a $\sigma$-ordered Lalley--Gatzouras sponge, then $T(\nu_{\boldsymbol{\mu}},q) = T_d^{\boldsymbol{\mu},\sigma}(q)$ for all $q\in\mathbb{R}$. Since $T_d^{\boldsymbol{\mu},\sigma}(q)$ is differentiable everywhere, the result from~\cite{Ngai_97PAMS} yields that
\begin{equation*}
	\dim_{\mathrm H} \nu_{\boldsymbol{\mu}} = \dim_{\mathrm P} \nu_{\boldsymbol{\mu}} = \dim_{\mathrm e} \nu_{\boldsymbol{\mu}} = -T'(\nu_{\boldsymbol{\mu}},1).
\end{equation*}
\end{corollary}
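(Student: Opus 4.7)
The plan is to chain together results already proved in the paper; the only substantive work is showing that $q\mapsto T_d^{\boldsymbol{\mu},\sigma}(q)$ is differentiable everywhere. For the first claim, I would first observe that a $\sigma$-ordered Lalley--Gatzouras sponge satisfies the coordinate ordering condition~\eqref{eq:21}, which forces $L_{\delta}(\ii,\sigma_d)\leq\cdots\leq L_{\delta}(\ii,\sigma_1)$ for every $\ii\in\Sigma$ and every $\delta>0$, so $\mathcal{A}=\{\sigma\}$. Theorem~\ref{thm:Lqmain} then gives $T(\nu_{\boldsymbol{\mu}},q)=P(\boldsymbol{\psi}_q^{\boldsymbol{\mu}})$, and the final sentence of Corollary~\ref{cor:main} applied with $\boldsymbol{\varphi}=\boldsymbol{\psi}_q^{\boldsymbol{\mu}}$ yields $P(\boldsymbol{\psi}_q^{\boldsymbol{\mu}})=T_d^{\sigma}(\boldsymbol{\psi}_q^{\boldsymbol{\mu}})$. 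Since $e^{\psi_{q,n}^{\boldsymbol{\mu},\sigma}(i)}=(\mu_n^{\sigma}(i))^q$, the defining equation~\eqref{eq:14} coincides with~\eqref{eq:31}, so $T_d^{\sigma}(\boldsymbol{\psi}_q^{\boldsymbol{\mu}})=T_d^{\boldsymbol{\mu},\sigma}(q)$ and the first claim follows.

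For differentiability I would induct on $n=1,\ldots,d$, showing that $q\mapsto T_n^{\boldsymbol{\mu},\sigma}(q)$ is $C^{\infty}$. Rewrite~\eqref{eq:31} as $F_n(q,T_1^{\boldsymbol{\mu},\sigma}(q),\ldots,T_n^{\boldsymbol{\mu},\sigma}(q))=1$, where
\[ F_n(q,s_1,\ldots,s_n) := \sum_{i\in\mathcal{I}_{n}^{\sigma}} (\mu_n^{\sigma}(i))^q \prod_{\ell=1}^n \bigl(\lambda_i^{(\sigma_{\ell})}\bigr)^{s_{\ell}-s_{\ell-1}}, \qquad s_0:=0. \]
This $F_n$ is smooth in all of its variables, and for each fixed $(q,s_1,\ldots,s_{n-1})$ it is strictly decreasing in $s_n$ from $+\infty$ to $0$ (because every $\lambda_i^{(\sigma_n)}\in(0,1)$), so the solution $s_n=T_n^{\boldsymbol{\mu},\sigma}(q)$ is unique. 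At this solution the partial derivative $\partial F_n/\partial s_n$ equals $\sum_{i\in\mathcal{I}_n^{\sigma}} p_{\sigma_n}^{\ast}(i)\log\lambda_i^{(\sigma_n)}$, which is strictly negative since $(p_{\sigma_n}^{\ast}(i))_i$ is a probability vector by~\eqref{eq:31} and every $\log\lambda_i^{(\sigma_n)}<0$. The implicit function theorem, together with the inductive smoothness of $T_1^{\boldsymbol{\mu},\sigma},\ldots,T_{n-1}^{\boldsymbol{\mu},\sigma}$, then delivers smoothness of $T_n^{\boldsymbol{\mu},\sigma}$ on all of $\mathbb{R}$; the case $n=d$ is what we need.

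The chain of equalities $\dim_{\mathrm H}\nu_{\boldsymbol{\mu}}=\dim_{\mathrm P}\nu_{\boldsymbol{\mu}}=\dim_{\mathrm e}\nu_{\boldsymbol{\mu}}=-T'(\nu_{\boldsymbol{\mu}},1)$ then follows immediately from Ngai's theorem~\cite{Ngai_97PAMS} applied at $q=1$, as indicated in the statement. There is no serious obstacle in this argument: the first step is direct bookkeeping with Theorem~\ref{thm:Lqmain} and Corollary~\ref{cor:main}, the final conclusion is a citation, and the only calculation of substance is the routine implicit function argument, which is clean because $\partial F_n/\partial s_n$ is strictly bounded away from $0$ on a neighbourhood of the solution uniformly in $q$ on any compact interval.
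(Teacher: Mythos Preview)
Your proposal is correct and follows the same approach as the paper: the paper states that Corollary~\ref{cor:LGsponge} follows immediately from combining Theorem~\ref{thm:Lqmain} with the last clause of Corollary~\ref{cor:main} (since $\mathcal{A}=\{\sigma\}$), and merely remarks that implicit differentiation of $T_d^{\boldsymbol{\mu},\sigma}(q)$ gives the derivative at $q=1$, then cites Ngai. Your write-up fills in the implicit function theorem details that the paper leaves implicit, but the underlying argument is identical.
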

Implicit differentiation of $T_d^{\boldsymbol{\mu},\sigma}(q)$ gives the value of $T'(\nu_{\boldsymbol{\mu}},1)$. Theorem~\ref{thm:Lqmain} is proved in Section~\ref{sec:proofLq}.

\subsection{Box and Frostman dimension of self-affine measures}\label{sec:BoxFrostmanRes}

Given $\boldsymbol{\mu}$ and $\sigma\in\mathcal{A}$, we define two sequences of numbers $\overline{S}_0^{\boldsymbol{\mu},\sigma}\coloneqq0, \overline{S}_1^{\boldsymbol{\mu},\sigma},\ldots, \overline{S}_d^{\boldsymbol{\mu},\sigma}$ and $\underline{S}_0^{\boldsymbol{\mu},\sigma}\coloneqq0, \underline{S}_1^{\boldsymbol{\mu},\sigma},\ldots, \underline{S}_d^{\boldsymbol{\mu},\sigma}$ by
\begin{equation}\label{eq:39}
	\overline{S}_n^{\boldsymbol{\mu},\sigma} \coloneqq \overline{S}_{n-1}^{\boldsymbol{\mu},\sigma} +  \max_{i\in\mathcal{I}_n^{\sigma}} \frac{1}{\log \lambda_i^{(\sigma_n)}} \bigg( \log \mu_n^{\sigma}(i)  +  \sum_{m=1}^{n-1}\big(\overline{S}_{m-1}^{\boldsymbol{\mu},\sigma} - \overline{S}_{m}^{\boldsymbol{\mu},\sigma}\big) \log \big( \lambda_i^{(\sigma_{m})} \big) \bigg),
\end{equation}
and
\begin{equation}\label{eq:390}
	\underline{S}_n^{\boldsymbol{\mu},\sigma} \coloneqq \underline{S}_{n-1}^{\boldsymbol{\mu},\sigma} +  \min_{i\in\mathcal{I}_n^{\sigma}} \frac{1}{\log \lambda_i^{(\sigma_n)}} \bigg( \log \mu_n^{\sigma}(i)  +  \sum_{m=1}^{n-1}\big(\underline{S}_{m-1}^{\boldsymbol{\mu},\sigma} - \underline{S}_{m}^{\boldsymbol{\mu},\sigma}\big) \log \big( \lambda_i^{(\sigma_{m})} \big) \bigg),
\end{equation}
where the empty sum equals $0$ in case $n=1$. Let $\overline{k}_n^{\sigma}\in\mathcal{I}_n^{\sigma}$ denote any of the symbols which attain the maximum in~\eqref{eq:39} and $\underline{k}_n^{\sigma}\in\mathcal{I}_n^{\sigma}$ be any of the symbols which attain the minimum in~\eqref{eq:390}. Also let $\overline{\mathbf{K}}_{\sigma}\coloneqq (\overline{\mathbf{k}}_{\sigma_d},\ldots,\overline{\mathbf{k}}_{\sigma_1})$ and $\underline{\mathbf{K}}_{\sigma}\coloneqq (\underline{\mathbf{k}}_{\sigma_d},\ldots,\underline{\mathbf{k}}_{\sigma_1})$, where $\overline{\mathbf{k}}_{\sigma_n}$ denotes the degenerate probability vector on $\mathcal{I}_n^{\sigma}$ which puts all mass on $\overline{k}_n^\sigma$ and similarly $\underline{\mathbf{k}}_{\sigma_n}$ puts mass 1 on $\underline{k}_n^\sigma$. The quantity of interest now is
\begin{equation*}
S_{\sigma}(\mathbf{P}_{\!\sigma})=S(\mathbf{P}_{\!\sigma}) = S(\mathbf{p}_{\sigma_d};\ldots;\mathbf{p}_{\sigma_1}) \coloneqq -\sum_{n=1}^{d} 
C_{n}^{(d), \sigma}(\mathbf{P}_{\!\sigma}) \cdot \int\! \log \boldsymbol{\mu}_n^{\sigma}\,\mathrm{d}\mathbf{p}_{\sigma_n},
\end{equation*}
where $\int\! \log \boldsymbol{\mu}_n^{\sigma}\,\mathrm{d}\mathbf{p}_{\sigma_n} = \sum_{i \in \mathcal{I}_{n}^{\sigma}} p_{\sigma_n}(i)\cdot \log \mu_n^{\sigma}(i)$.
\begin{prop}\label{prop:maxminexponent}
For any $\sigma\in\mathcal{A}$ the supremum $\sup _{\mathbf{P}_{\!\sigma} \in \mathcal{P}^{\sigma}} S(\mathbf{P}_{\!\sigma}) = S(\overline{\mathbf{K}}_{\sigma})=\overline{S}_d^{\boldsymbol{\mu},\sigma}$ and the infimum $\inf_{\mathbf{P}_{\!\sigma} \in \mathcal{P}^{\sigma}} S(\mathbf{P}_{\!\sigma}) = S(\underline{\mathbf{K}}_{\sigma})=\underline{S}_d^{\boldsymbol{\mu},\sigma}$.
\end{prop}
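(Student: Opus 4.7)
My plan is to prove Proposition~\ref{prop:maxminexponent} in two steps: a direct linear-algebra evaluation of $S(\overline{\mathbf{K}}_\sigma)$ and $S(\underline{\mathbf{K}}_\sigma)$, followed by a scaling argument based on Proposition~\ref{prop:main} showing that these are the extremal values on $\mathcal{P}^\sigma$. Since the infimum case is entirely symmetric, I describe the supremum in detail.

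First, I evaluate $S$ at the degenerate vector $\overline{\mathbf{K}}_\sigma$. For a degenerate $\overline{\mathbf{k}}_{\sigma_m}$ one has $\chi_n^\sigma(\overline{\mathbf{k}}_{\sigma_m}) = -\log \lambda_{\overline{k}_m^\sigma}^{(\sigma_n)}$ and $\int \log \boldsymbol{\mu}_n^\sigma \, \mathrm{d}\overline{\mathbf{k}}_{\sigma_n} = \log \mu_n^\sigma(\overline{k}_n^\sigma)$. Rearranging~\eqref{eq:15} gives the identity $\sum_{m=n}^d C_m^{(d),\sigma}(\overline{\mathbf{K}}_\sigma)\, \chi_n^\sigma(\overline{\mathbf{k}}_{\sigma_m}) = 1$ for $n=1,\ldots,d$, while the recursion~\eqref{eq:39} evaluated at $\overline{k}_n^\sigma$ can be rewritten as $\sum_{m=1}^n \Delta_m\, \log \lambda_{\overline{k}_n^\sigma}^{(\sigma_m)} = \log \mu_n^\sigma(\overline{k}_n^\sigma)$ with $\Delta_m := \overline{S}_m^{\boldsymbol{\mu},\sigma} - \overline{S}_{m-1}^{\boldsymbol{\mu},\sigma}$. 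These are two triangular linear systems whose coefficient matrices are transposes of one another. Pairing them gives
\[ \overline{S}_d^{\boldsymbol{\mu},\sigma} \;=\; \sum_{n=1}^d \Delta_n \;=\; -\sum_{n=1}^d C_n^{(d),\sigma}(\overline{\mathbf{K}}_\sigma)\, \log\mu_n^\sigma(\overline{k}_n^\sigma) \;=\; S(\overline{\mathbf{K}}_\sigma), \]
and an identical argument yields $S(\underline{\mathbf{K}}_\sigma) = \underline{S}_d^{\boldsymbol{\mu},\sigma}$.

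Second, I apply Proposition~\ref{prop:main} to the family $\boldsymbol{\psi}_\beta^{\boldsymbol{\mu}}$ from~\eqref{eq:38} with parameter $\beta \in \mathbb{R}$: by the definition of $S$, one has $t_\sigma(\mathbf{P}_{\!\sigma}, \boldsymbol{\psi}_\beta^{\boldsymbol{\mu}}) = \sum_{n=1}^d C_n^{(d),\sigma}(\mathbf{P}_{\!\sigma})\, H(\mathbf{p}_{\sigma_n}) - \beta\, S(\mathbf{P}_{\!\sigma})$, and the proposition identifies its supremum over $\mathcal{P}^\sigma$ with $T_d^{\boldsymbol{\mu},\sigma}(\beta)$ of~\eqref{eq:31}. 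Since each $\chi_n^\sigma$ is bounded above and bounded below away from zero on $\mathcal{P}_n^\sigma$, the coefficients $C_n^{(d),\sigma}$ and the entropies $H(\mathbf{p}_{\sigma_n})$ are uniformly bounded on $\mathcal{P}^\sigma$; dividing by $-\beta$ and sending $\beta \to -\infty$ kills the entropy contribution uniformly and yields $\lim_{\beta \to -\infty} T_d^{\boldsymbol{\mu},\sigma}(\beta)/(-\beta) = \sup_{\mathbf{P}_{\!\sigma}\in\mathcal{P}^\sigma} S(\mathbf{P}_{\!\sigma})$. In parallel, substituting the ansatz $T_n^{\boldsymbol{\mu},\sigma}(\beta) = -\beta\, s_n + o(\beta)$ into~\eqref{eq:31} and identifying the dominant balance of each sum forces $\min_{i \in \mathcal{I}_n^\sigma}\big[\log\mu_n^\sigma(i) - \sum_{\ell=1}^n (s_\ell - s_{\ell-1})\log\lambda_i^{(\sigma_\ell)}\big] = 0$ for each $n$; solving for $s_n - s_{n-1}$ reproduces exactly the maximum in~\eqref{eq:39}, so $s_n = \overline{S}_n^{\boldsymbol{\mu},\sigma}$. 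Combining the two limits gives $\sup_{\mathbf{P}_{\!\sigma} \in \mathcal{P}^\sigma} S = \overline{S}_d^{\boldsymbol{\mu},\sigma} = S(\overline{\mathbf{K}}_\sigma)$; taking $\beta \to +\infty$ instead reproduces~\eqref{eq:390} and handles the infimum. The main technical obstacle will be to justify this dominant-balance asymptotic for the coupled system~\eqref{eq:31} --- one must show that the $o(\beta)$ corrections propagate consistently through all $d$ equations and that non-extremal summands are genuinely subdominant as $\beta \to \pm\infty$. I expect this can be handled by a careful logarithmic bookkeeping using uniform bounds on the $\lambda$- and $\mu$-ranges.
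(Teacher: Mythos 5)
Your plan is sound and it is a genuinely different route from the paper. The paper argues directly on the simplex: in Lemma~\ref{lem:fnsigma} it observes that the relevant single-coordinate functionals $g_n^{\sigma}$ and $h_n^{\sigma}$ are ratios of functions affine in each $p_{\sigma_n}(i)$, hence extremised at degenerate vectors (which by definition are $\overline{\mathbf{k}}_{\sigma_n}$, $\underline{\mathbf{k}}_{\sigma_n}$), and then Lemma~\ref{lem:Spsigma} runs an induction substituting these optimisers one coordinate at a time, producing the monotone chain $S(\mathbf{P}_{\!\sigma})\leq S(\mathbf{p}_{\sigma_d};\ldots;\mathbf{p}_{\sigma_2};\overline{\mathbf{k}}_{\sigma_1})\leq\ldots\leq S(\overline{\mathbf{K}}_{\sigma})=\overline{S}_d^{\boldsymbol{\mu},\sigma}$; the value $S(\overline{\mathbf{K}}_{\sigma})=\overline{S}_d^{\boldsymbol{\mu},\sigma}$ falls out of the induction rather than being computed separately. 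You instead (i) evaluate $S(\overline{\mathbf{K}}_{\sigma})$ by pairing the two triangular systems $\sum_{m\geq n}C_m^{(d),\sigma}(\overline{\mathbf{K}}_{\sigma})\chi_n^{\sigma}(\overline{\mathbf{k}}_{\sigma_m})=1$ (from~\eqref{eq:15}) and $\sum_{m\leq n}\Delta_m\log\lambda_{\overline{k}_n^{\sigma}}^{(\sigma_m)}=\log\mu_n^{\sigma}(\overline{k}_n^{\sigma})$ (from~\eqref{eq:39}) --- this duality computation is correct and is arguably cleaner than the paper's implicit derivation --- and (ii) identify $\sup_{\mathcal{P}^{\sigma}}S$ with the slope of $T_d^{\boldsymbol{\mu},\sigma}(\beta)$ as $\beta\to-\infty$ via Proposition~\ref{prop:main}, then read off that slope from~\eqref{eq:31}. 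Two remarks on making (ii) airtight. First, your uniform-boundedness claim must hold on all of $\mathcal{P}^{\sigma}$, not just $\mathcal{Q}^{\sigma}$ (where the paper's bound $0\leq C_n^{(d),\sigma}\leq-1/\log\lambda_{\min}$ lives); this is fine, since the Lyapunov exponents lie in $[-\log\lambda_{\max},-\log\lambda_{\min}]\subset(0,\infty)$ and the recursion~\eqref{eq:15} then bounds $|C_n^{(d),\sigma}|$ uniformly by induction, but it should be said. Second, the ``dominant balance'' step you flag as the main obstacle is genuinely fillable and needs no delicate bookkeeping: setting $u_n(\beta)\coloneqq T_n^{\boldsymbol{\mu},\sigma}(\beta)-T_{n-1}^{\boldsymbol{\mu},\sigma}(\beta)$, the facts that every summand in~\eqref{eq:31} is at most $1$ and that some summand is at least $1/\#\mathcal{I}_n^{\sigma}$ give, after dividing by $\log\lambda_i^{(\sigma_n)}<0$ and using the inductive hypothesis $u_\ell(\beta)=-\beta(\overline{S}_\ell^{\boldsymbol{\mu},\sigma}-\overline{S}_{\ell-1}^{\boldsymbol{\mu},\sigma})+O(1)$ for $\ell<n$, the two-sided bound $u_n(\beta)=-\beta(\overline{S}_n^{\boldsymbol{\mu},\sigma}-\overline{S}_{n-1}^{\boldsymbol{\mu},\sigma})+O(1)$ as $\beta\to-\infty$ (and the analogous statement with $\underline{S}$ as $\beta\to+\infty$), which is stronger than the $o(\beta)$ ansatz and legitimises the identification with the recursion~\eqref{eq:39}. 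With those two points spelled out your argument is complete; what it buys is an independent confirmation of the paper's asymptote picture (it essentially re-derives Lemma~\ref{lem:asymptotes} with $\mathcal{P}^{\sigma}$ in place of $\mathcal{Q}^{\sigma}$), at the cost of being less elementary and more indirect than the paper's vertex-plus-induction proof, and of not exhibiting the monotone substitution chain that the paper reuses elsewhere.
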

\begin{theorem}\label{thm:dimBFmeasure}
Let $\nu_{\boldsymbol{\mu}}$ be a self-affine measure on the sponge $F\subset \mathbb{R}^d$ that satisfies the SPPC. Then
\begin{equation*}
\dim_{\mathrm F} \nu_{\boldsymbol{\mu}} = \min_{\sigma \in \mathcal{A}}\, \inf_{\mathbf{P}_{\! \sigma}\in\mathcal{Q}^{\sigma}} S(\mathbf{P}_{\!\sigma}) = \lim_{q\to +\infty} \frac{T(\nu_{\boldsymbol{\mu}},q)}{-q} \geq \max\Big\{ 0, \min_{\sigma \in \mathcal{A}} \underline{S}_d^{\boldsymbol{\mu},\sigma} \Big\},
\end{equation*}
and
\begin{equation*}
\dim_{\mathrm B} \nu_{\boldsymbol{\mu}} = \max_{\sigma \in \mathcal{A}}\, \sup_{\mathbf{P}_{\! \sigma}\in\mathcal{Q}^{\sigma}} S(\mathbf{P}_{\!\sigma}) = \lim_{q\to -\infty} \frac{T(\nu_{\boldsymbol{\mu}},q)}{-q} \leq \max_{\sigma \in \mathcal{A}} \overline{S}_d^{\boldsymbol{\mu},\sigma}.
\end{equation*}
In particular, if $F$ is a $\sigma$-ordered Lalley--Gatzouras sponge, then $\dim_{\mathrm F} \nu_{\boldsymbol{\mu}} = \underline{S}_d^{\boldsymbol{\mu},\sigma}$ and $\dim_{\mathrm B} \nu_{\boldsymbol{\mu}} = \overline{S}_d^{\boldsymbol{\mu},\sigma}$.
\end{theorem}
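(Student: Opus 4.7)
The plan is to read off the Frostman and box dimensions of $\nu_{\boldsymbol{\mu}}$ from the identity $T(\nu_{\boldsymbol{\mu}},q)=P(\boldsymbol{\psi}_q^{\boldsymbol{\mu}})$ of Theorem~\ref{thm:Lqmain} together with the variational formula~\eqref{eq:12} by letting $q\to\pm\infty$ and using the asymptotic characterisations $\dim_{\mathrm F}\nu=\lim_{q\to+\infty} T(\nu,q)/(-q)$ and $\overline{\dim}_{\mathrm B}\nu=\lim_{q\to-\infty}T(\nu,q)/(-q)$ stated after the definitions of $\dim_{\mathrm F}$ and $\overline{\dim}_{\mathrm B}$ in Section~\ref{sec:IntroLqSpectrum} (using $(1-q)/(-q)\to 1$).

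Setting $A_\sigma(\mathbf{P}_{\!\sigma})\coloneqq\sum_{n=1}^d C_n^{(d),\sigma}(\mathbf{P}_{\!\sigma})\, H(\mathbf{p}_{\sigma_n})$, Theorems~\ref{thm:Lqmain} and~\ref{thm:main1} yield
\begin{equation*}
T(\nu_{\boldsymbol{\mu}},q)=\max_{\sigma\in\mathcal{A}}\sup_{\mathbf{P}_{\!\sigma}\in\mathcal{Q}^\sigma}\bigl[A_\sigma(\mathbf{P}_{\!\sigma})-q\, S_\sigma(\mathbf{P}_{\!\sigma})\bigr].
\end{equation*}
The first technical point to settle is that $A_\sigma$ is uniformly bounded on $\mathcal{Q}^\sigma$: the recursion~\eqref{eq:15} forces $C_n^{(d),\sigma}\chi_n^\sigma(\mathbf{p}_{\sigma_n})\leq 1$ in $\mathcal{Q}^\sigma$, and $\chi_n^\sigma(\mathbf{p}_{\sigma_n})\geq\min_{i\in\mathcal{I}_n^\sigma}(-\log\lambda_i^{(\sigma_n)})>0$, so $C_n^{(d),\sigma}$ is bounded; combined with $H(\mathbf{p}_{\sigma_n})\leq\log|\mathcal{I}_n^\sigma|$ this gives $0\leq A_\sigma\leq M_\sigma<\infty$. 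For $q>0$, dividing by $-q<0$ flips $\sup$ to $\inf$ and $\max$ to $\min$:
\begin{equation*}
\frac{T(\nu_{\boldsymbol{\mu}},q)}{-q}=\min_{\sigma\in\mathcal{A}}\inf_{\mathbf{P}_{\!\sigma}\in\mathcal{Q}^\sigma}\Bigl[S_\sigma(\mathbf{P}_{\!\sigma})-\frac{A_\sigma(\mathbf{P}_{\!\sigma})}{q}\Bigr].
\end{equation*}
The correction $A_\sigma/q$ is uniformly $O(1/q)$, so $q\to+\infty$ delivers the Frostman chain of equalities. For $q<0$, $-q>0$ preserves $\max$ and $\sup$, and $q\to-\infty$ analogously gives $\overline{\dim}_{\mathrm B}\nu_{\boldsymbol{\mu}}=\max_\sigma\sup_{\mathbf{P}_{\!\sigma}\in\mathcal{Q}^\sigma} S_\sigma(\mathbf{P}_{\!\sigma})$. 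The outer bounds follow from $\mathcal{Q}^\sigma\subseteq\mathcal{P}^\sigma$ together with Proposition~\ref{prop:maxminexponent} and the trivial $\dim_{\mathrm F}\nu\geq 0$. For the Lalley--Gatzouras case ($\mathcal{A}=\{\sigma\}$) I would show $\mathcal{Q}^\sigma=\mathcal{P}^\sigma$ by downward induction on $n$: the ordering~\eqref{eq:21} gives $\chi_n^\sigma(\mathbf{p}_{\sigma_m})\leq\chi_{n+1}^\sigma(\mathbf{p}_{\sigma_m})$ for $m>n$, and coupled with the identity $\sum_{m=n+1}^d C_m^{(d),\sigma}\chi_{n+1}^\sigma(\mathbf{p}_{\sigma_m})=1$ (obtained by applying~\eqref{eq:15} at level $n+1$) this yields $\sum_{m=n+1}^d C_m^{(d),\sigma}\chi_n^\sigma(\mathbf{p}_{\sigma_m})\leq 1$, hence $C_n^{(d),\sigma}\geq 0$; Proposition~\ref{prop:maxminexponent} then closes the case.

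The main obstacle is upgrading $\overline{\dim}_{\mathrm B}\nu_{\boldsymbol{\mu}}$ to the \emph{existence} of $\dim_{\mathrm B}\nu_{\boldsymbol{\mu}}$, since the $L^q$-spectrum limit only captures the upper box dimension of the measure. The natural remedy is to produce, for every $\delta>0$ and every $x\in\supp(\nu_{\boldsymbol{\mu}})$, a uniform lower bound $\nu_{\boldsymbol{\mu}}(B(x,\delta))\geq c\delta^{s}$ with $s=\max_\sigma\sup_{\mathcal{Q}^\sigma} S_\sigma$. Via the SPPC this reduces to bounding $\widetilde\nu_{\boldsymbol{\mu}}(B_\delta(\ii))$ from below uniformly over symbolic approximate cubes at every scale, which should follow from the same method-of-types machinery driving Theorem~\ref{thm:main1}: the minimal cube mass at scale $\delta$ is controlled, up to subexponential factors, by the type attaining $\sup_{\mathbf{P}_{\!\sigma}\in\mathcal{Q}^\sigma}S_\sigma(\mathbf{P}_{\!\sigma})$.
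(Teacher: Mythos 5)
Your asymptote computation is essentially the paper's Lemma~\ref{lem:asymptotes} (uniform bounds on $C_n^{(d),\sigma}$ and on the entropies over $\mathcal{Q}^{\sigma}$, then divide by $-q$), the outer bounds via $\mathcal{Q}^{\sigma}\subseteq\mathcal{P}^{\sigma}$ and Proposition~\ref{prop:maxminexponent} are fine, and your downward induction showing $\mathcal{Q}^{\sigma}=\mathcal{P}^{\sigma}$ under the coordinate ordering condition is correct (and more direct than the paper's appeal to Lemma~\ref{lem:4}). The trouble is with the two dimension identities themselves. The relation $\dim_{\mathrm F}\nu=\lim_{q\to+\infty}T(\nu,q)/(-q)$ is \emph{not} stated in Section~\ref{sec:IntroLqSpectrum}: the paper offers it only as a heuristic (the precise citation to the literature is for the dual claim about $\overline{\dim}_{\mathrm B}\nu$), and this identity is itself part of what Theorem~\ref{thm:dimBFmeasure} asserts, so it cannot be assumed. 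It is provable in this setting (one direction from a one-ball packing together with the existence of the limit supplied by Theorem~\ref{thm:Lqmain}, the other from a $\delta^{-d}$ bound on packing cardinality), but you neither prove it nor have a citation; the paper sidesteps it entirely and proves $\dim_{\mathrm F}\nu_{\boldsymbol{\mu}}=\min_{\sigma}\inf_{\mathcal{Q}^{\sigma}}S(\mathbf{P}_{\!\sigma})$ by a direct geometric argument (Lemma~\ref{lem:Frostman}), built on the uniform cube-measure bounds of Lemma~\ref{lem:UniformBoundMeasure} and the density of types from Lemma~\ref{lem:4}, and only then matches this with the $q\to+\infty$ limit via Lemma~\ref{lem:asymptotes}.

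The serious gap is in the box-dimension part. You correctly identify that the existence of $\dim_{\mathrm B}\nu_{\boldsymbol{\mu}}$ is the obstacle, but your proposed remedy points in the wrong direction: a uniform lower bound $\nu_{\boldsymbol{\mu}}(B(x,\delta))\geq c\delta^{s}$ for all $x$ and all scales, with $s=\max_{\sigma}\sup_{\mathcal{Q}^{\sigma}}S(\mathbf{P}_{\!\sigma})$, gives by the very definitions $\overline{\dim}_{\mathrm B}\nu_{\boldsymbol{\mu}}\leq s$ and a fortiori $\underline{\dim}_{\mathrm B}\nu_{\boldsymbol{\mu}}\leq s$; it cannot deliver the missing inequality $\underline{\dim}_{\mathrm B}\nu_{\boldsymbol{\mu}}\geq s$. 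What is actually needed is the opposite type of statement: at \emph{every} sufficiently small scale $\delta$ there must exist a point $x\in F$ whose ball has \emph{small} measure, $\nu_{\boldsymbol{\mu}}(B(x,C_0\delta))\lesssim\delta^{s-\varepsilon}$, so that no sequence of scales can witness a smaller exponent in the definition of $\underline{\dim}_{\mathrm B}$. This is precisely where the paper needs new geometric input: the image of an approximate cube of near-extremal type need not contain a comparable ball centred in $F$, so the coding is perturbed by the map $\alpha_{\delta}$ of~\eqref{eq:73}, and Lemma~\ref{lem:BallInCube} shows both that a ball of radius $C_0\delta$ sits inside $\overline{\pi(B_{\delta}(\alpha_{\delta}(\ii)))}$ and that the perturbation changes the cube's measure only by a bounded factor; Lemma~\ref{lem:box} then combines this with types satisfying $S(\tau_{\delta}^{\sigma}(\ii))\geq\sup_{\mathcal{Q}^{\sigma}}S-\varepsilon$. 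Your sketch contains neither the correct direction of the estimate nor this inscribing construction, so the existence of $\dim_{\mathrm B}\nu_{\boldsymbol{\mu}}$, and with it the second displayed chain of equalities, is not established by your argument.
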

Proposition~\ref{prop:maxminexponent} is proved in Section~\ref{sec:03} and Theorem~\ref{thm:dimBFmeasure} in Section~\ref{sec:proofFrostmanBox}.

\section{Discussion and two worked out examples}\label{sec:discussion}

In this section, we give further context to our results by relating it to previous papers and demonstrate on two worked out examples how our approach tackles problems where earlier ones fell short. These examples can also help the reader get more comfortable with our notation. 

The closest related work is due to Olsen~\cite{Olsen_PJM98}, who amongst other things, calculated the $L^q$ spectrum of $\nu_{\boldsymbol{\mu}}$ supported on Bedford--McMullen sponges and also both asymptotes of the spectrum. To the best of our knowledge this is the only result in the non-conformal higher dimensional setting which additionally even handles the $q<0$ case. One slight drawback is that it assumes the VSSC which is equivalent to the very strong SPPC in our setting. Our approach allows us to weaken the separation condition to the SPPC while still obtaining the $L^q$ spectrum for the whole range of $q\in\mathbb{R}$, also $\dim_{\mathrm F} \nu_{\boldsymbol{\mu}}$ and $\dim_{\mathrm B} \nu_{\boldsymbol{\mu}}$ for a substantially larger class of sponges.   

Existing results for the $L^q$ spectrum on the plane restrict to $q\geq 0$ but allow for box-like sets outside the class of Lalley--Gatzouras and Bara\'nski carpets~\cite{FengWang2005, Fraser_TAMS2016} even with non-linear maps~\cite{falconer_fraser_lee_2021}. This is due in part to the fact that the $L^q$ spectrum of self-conformal IFSs on the line is known to exist~\cite{PeresSolomyak_Indiana00}, hence, the formulas on the plane can at least be stated depending on the $L^q$ spectrum of the projections onto the two coordinate axes. Assuming the SPPC, Theorem~\ref{thm:Lqmain} recovers the variational formula proved by Feng and Wang~\cite{FengWang2005}.  It follows from Fraser's work~\cite[Theorem 2.10 and 2.12]{Fraser_TAMS2016} that assuming the SPPC on the plane $T(\nu_{\boldsymbol{\mu}},q) = \max _{\sigma \in \mathcal{A}}\, T_d^{\boldsymbol{\mu},\sigma}(q)$ for $q\in(0,1]$ and $T(\nu_{\boldsymbol{\mu}},q)$ is differentiable at $q=1$. Uncovering the connection between the variational formula and the closed form expression is closely connected to Question~\ref{ques:2}. Already on the plane, this closed form expression need not hold for $q>1$ as was shown by the example presented in~\cite[Theorem 3.8]{FraserMorrisetal_Nonlin21} which we revisit in Section~\ref{sec:planarBaranski}.
\begin{question}\label{ques:1}
Is it true in higher dimensions as well that for a self-affine measure supported on a sponge satisfying the SPPC there is an interval of $q$ for which $T(\nu_{\boldsymbol{\mu}},q) = \max _{\sigma \in \mathcal{A}}\, T_d^{\boldsymbol{\mu},\sigma}(q)$? If so, does the interval include $q=0$? Is $T(\nu_{\boldsymbol{\mu}},q)$ always differentiable at $q=1$?
\end{question}

In case of the box dimension, Kenyon and Peres~\cite{KenyonPeres_ETDS96} calculated it for Bedford--McMullen sponges. The Lalley--Gatzouras class in arbitrary dimensions was also handled independently from our work in~\cite{Raoetal_BoxCountingMeasure_arxiv}. Recently, Fraser and Jurga~\cite{fraserJurga2021AdvMath} considered sponges in $d=3$ in the more general setting where each diagonal matrix can be composed with a permutation matrix. Amongst sponges which satisfy the SPPC, their main result only covers the Lalley--Gatzouras class. More importantly, they present an example in~\cite[Theorem 5.5]{fraserJurga2021AdvMath} which shows that their bounds are not applicable in general to the Bara\'nski class. In Section~\ref{sec:3DBaranski} we calculate the box dimension of this sponge and show the qualitative difference of our pressure compared to the one in~\cite{fraserJurga2021AdvMath}. Feng and Hu~\cite[Theorem 2.15]{FengHu09} considered diagonal systems with equal matrices.

Existing results on the plane go well beyond the SPPC, though it is still an open folklore conjecture that the box dimension of any self-affine set exists regardless of overlaps. It does not exist for all \emph{sub}-self-affine sets introduced in~\cite{KaenmakiVilppolainen_2010}, see the recent example of Jurga~\cite{Jurga_DimBsubselfaffine_arxiv}. Carpets satisfying the rectangular open set condition are covered in~\cite{FengWang2005, Fraser_TAMS2016}, so it would be particularly interesting to look at diagonal (and anti-diagonal) systems with overlaps. There has been some progress in this direction~\cite{fraser_shmerkin_2016,KolSimon_TriagGL2019,pardo-simon}, where the authors consider a carpet satisfying the SPPC and then shift complete rows and/or columns and give sufficient conditions under which $\dim_{\mathrm{B}}F$ does not drop, i.e. $\dim_{\mathrm{B}}F=\max_{\sigma \in \mathcal{A}}T_2^{\sigma}$. Assuming the SPPC to begin with ensures that $T_1^{\sigma}\leq 1$ and $T_2^{\sigma}\leq 2$ also for the shifted system. It makes sense to define $T_1^{\sigma}$ and $T_2^{\sigma}$ for general diagonal systems using the projections of the first level cylinders to the $x$ and $y$ coordinate axis. If $T_1^{\sigma}>1$ then it is appropriate to adjust the definition of $T_2^{\sigma}$ to the solution of the equation 
\begin{equation*}
\sum_{i\in\mathcal{I}}  \big( \lambda_i^{(\sigma_{1})} \big)^{\min\{ T_{1}^{\sigma},1\}} \big( \lambda_i^{(\sigma_{2})} \big)^{T_{2}^{\sigma}- \min \{ T_{1}^{\sigma},1\}} =1.
\end{equation*}
\begin{question}
Given an arbitrary diagonal system on the plane, under what overlapping conditions is it true that $\dim_{\mathrm{B}}F=\min\{ \max_{\sigma \in \mathcal{A}}T_2^{\sigma}, 2 \}$? Is it sufficient to assume the exponential separation condition introduced in~\cite{Hochman_Annals14} for both projected IFSs?
\end{question}

B\'ar\'any, Rams and Simon~\cite[Theorem B]{BRS_2016PAMS} partially answered the second question in the affirmative. Their result does not cover the case when $\min_{\sigma\in\mathcal{A}}\{T_1^{\sigma}\}>1$, in which case it is reasonable to suspect that the box dimension is equal to the affinity dimension introduced in~\cite{falconer_1988}.

The variational formula sheds some light on the differences between Hausdorff and box dimension. To illustrate this, consider the class of $\sigma$-ordered Lalley-Gatzouras sponges with $\mathbf{P}_{\!\sigma}=(\mathbf{p}_{\sigma_d},\ldots,\mathbf{p}_{\sigma_1})\in\mathcal{P}^{\sigma}$ such that $\mathbf{p}_{\sigma_n}= (\mathbf{p}_{\sigma_d})_{n}^{\sigma}$, i.e. $\mathbf{p}_{\sigma_n}$ is just the `projection' of $\mathbf{p}_{\sigma_d}$ onto $\mathcal{I}_n^{\sigma}$ defined in~\eqref{eq:300}. A simple induction argument shows that in this case $C_{n}^{(d), \sigma}(\mathbf{P}_{\!\sigma}) = 1/\chi_{n}^{\sigma}(\mathbf{p}_{\sigma_d})-1/\chi_{n+1}^{\sigma}(\mathbf{p}_{\sigma_d})\geq 0$ (due to the coordinate ordering property), hence,
\begin{equation*}
t(\mathbf{P}_{\!\sigma}) = \sum_{n=1}^{d} \bigg( 
\frac{1}{\chi_{n}^{\sigma}(\mathbf{p}_{\sigma_d})}-\frac{1}{\chi_{n+1}^{\sigma}(\mathbf{p}_{\sigma_d})} \bigg) \cdot H(\mathbf{p}_{\sigma_{n}}) = 
\sum_{n=1}^{d}  
\frac{H(\mathbf{p}_{\sigma_{n}})-H(\mathbf{p}_{\sigma_{n-1}})}{\chi_{n}^{\sigma}(\mathbf{p}_{\sigma_d})},
\end{equation*}
where $H(\mathbf{p}_{\sigma_{0}})\coloneqq 0$. By~\cite[Theorem 1.3]{feng2020LYformulaArxiv}, this is precisely the Hausdorff dimension of the self-affine measure $\nu_{\mathbf{p}_{\sigma_d}}$. This \emph{Ledrappier--Young formula} holds in much higher generality for measures on self-affine sets~\cite{BARANYAnti201788,BaranyRams_TransAMS18,feng2020LYformulaArxiv,FengHu09} and has been a key technical tool in recent advancements in the dimension theory of self-affine sets and measures, see \cite{BaranyHochmanRapaport,hochmanRapaport2021,MorrisShmerkin_TransAMS19,Rapaport_TransAMS18} to name a few.

In particular, Lalley and Gatzouras~\cite{GatzourasLalley92} proved on the plane the variational formula
\begin{equation}\label{eq:50}
\dim_{\mathrm H}F = \sup_{\mathbf{p}_{\sigma_2}\in\mathcal{P}_{2}^{\sigma}}\; t(\mathbf{p}_{\sigma_2};\, (\mathbf{p}_{\sigma_2})_1^{\sigma} ),
\end{equation}
which is attained by a unique choice of $\mathbf{p}_{\sigma_2}$. This is to be compared with 
\begin{equation*}
	\dim_{\mathrm B}F = \max_{(\mathbf{p}_{\sigma_2};\mathbf{p}_{\sigma_1})\in\mathcal{P}_2^{\sigma}\times\mathcal{P}_1^{\sigma}}\; t(\mathbf{p}_{\sigma_2};\, \mathbf{p}_{\sigma_1} ),
\end{equation*}
where the maximum is uniquely attained by $(\mathbf{p}_{\sigma_2}^{\ast};\mathbf{p}_{\sigma_1}^{\ast})$. Therefore, we see that
\begin{equation*}
	\dim_{\mathrm H} F = \dim_{\mathrm B} F \;\Longleftrightarrow\; (\mathbf{p}_{\sigma_2}^*)_{1}^{\sigma} = \mathbf{p}_{\sigma_1}^* \;\Longleftrightarrow\; \sum_{j\in\mathcal{I}_2:\, \Pi_1^{\sigma}j=i} \big(\lambda_j^{(\sigma_2)}\big)^{T_2^{\sigma}-T_1^{\sigma}} =1  \;\text{ for every } i\in\mathcal{I}_1^{\sigma}.
\end{equation*}
This is referred to as the \emph{uniform fibre} case in the literature. In stark contrast, the main result of Das and Simmons~\cite{das2017hausdorff} is that the analogue of the variational formula~\eqref{eq:50} does not necessarily hold in higher dimensions for shift invariant measures. In fact, the example they provide is a Lalley--Gatzouras sponge in $\R^3$. Instead, one needs to consider a wider class of measures, called \emph{pseudo-Bernoulli measures}, which are not invariant to obtain a similar variational principle. Our variational principle~\eqref{eq:12} can be thought of as a Ledrappier--Young like formula for box counting quantities on sponges satisfying the SPPC which holds regardless of the dimension. 
\begin{question}
	Does a Ledrappier--Young like formula hold more generally for the box dimension of self-affine sets on the plane? What about higher dimensions?
\end{question}

For $d=3$, suppressing $\sigma$ from the notation, the expression to be maximised for $\dim_{\mathrm B}F$ is
\begin{equation*}
	\frac{H(\mathbf{p}_3)}{\chi_3(\mathbf{p}_3)} + \left( 1-\frac{\chi_2(\mathbf{p}_3)}{\chi_3(\mathbf{p}_3)} \right)\frac{H(\mathbf{p}_2)}{\chi_2(\mathbf{p}_2)} + \left[ 1-\frac{\chi_1(\mathbf{p}_3)}{\chi_3(\mathbf{p}_3)} - \left( 1-\frac{\chi_2(\mathbf{p}_3)}{\chi_3(\mathbf{p}_3)} \right) \frac{\chi_1(\mathbf{p}_2)}{\chi_2(\mathbf{p}_2)}\right]\frac{H(\mathbf{p}_1)}{\chi_1(\mathbf{p}_1)},
\end{equation*}
over the vectors $(\mathbf{p}_3;\mathbf{p}_2;\mathbf{p}_1)\in\mathcal{P}_3\times\mathcal{P}_2\times\mathcal{P}_1$. The maximum is uniquely attained by $(\mathbf{p}_3^{\ast};\mathbf{p}_2^{\ast};\mathbf{p}_1^{\ast})$. The constants $C_{n}^{(d)}(\mathbf{P})$ can be similarly expressed in terms of Lyapunov exponents for $d>3$, however, the calculations get increasingly involved and cumbersome.

\subsection{A planar Bara\'nski carpet}\label{sec:planarBaranski}

In~\cite[Theorem 3.8]{FraserMorrisetal_Nonlin21} the authors considered a family of Bara\'nski carpets on the plane given by the two maps
\begin{equation*}
	f_{1}(x, y)=\begin{pmatrix}
		c & 0 \\
		0 & d
	\end{pmatrix}\begin{pmatrix}
		x \\
		y
	\end{pmatrix} \quad \text { and } \quad f_{2}(x, y)=\begin{pmatrix}
		d & 0 \\
		0 & c
	\end{pmatrix}\begin{pmatrix}
		x \\
		y
	\end{pmatrix}+\begin{pmatrix}
		1-d \\
		1-c
	\end{pmatrix}
\end{equation*}
with $c>d>0$ and $c+d\leq 1$. Let $\sigma=(1,2)$ and $\omega=(2,1)$ denote the two orderings on the plane. The maps are arranged so that $\mathcal{I}_1^{\sigma}=\mathcal{I}_2^{\sigma}=\mathcal{I}_1^{\omega}=\mathcal{I}_2^{\omega}=\mathcal{I}=\{1,2\}$. Thus, for any $\boldsymbol{\mu}=(u,1-u)$,  definition~\eqref{eq:31} gives that $T_1^{\boldsymbol{\mu},\sigma}(q) = T_2^{\boldsymbol{\mu},\sigma}(q)$ and $T_1^{\boldsymbol{\mu},\omega}(q) = T_2^{\boldsymbol{\mu},\omega}(q)$. Let $T_u^{\sigma}(q)$ and $T_u^{\omega}(q)$ denote these two values, respectively. See~\eqref{eq:44} for the explicit formula. If $u=1/2$, then symmetry of the system implies that $T_{1/2}^{\sigma}(q)=T_{1/2}^{\omega}(q)$. The authors of~\cite{FraserMorrisetal_Nonlin21} showed for this particular $\boldsymbol{\mu}=(1/2,1/2)$ that $T(\nu_{\boldsymbol{\mu}},q)\leq g(q)< T_{1/2}^{\sigma}(q)$ for all $q>1$, where $g(q)$ is given by~\cite[eq. (3.2)]{FraserMorrisetal_Nonlin21}. Moreover, $T(\nu_{\boldsymbol{\mu}},q)$ is differentiable at $q=1$, but not analytic in any neighbourhood of $q=1$. They ask~\cite[Question 3.10]{FraserMorrisetal_Nonlin21} how many derivatives does $T(\nu_{\boldsymbol{\mu}},q)$ have at $q=1$ for $\boldsymbol{\mu}=(1/2,1/2)$? We answer this now by giving an explicit formula for $T(\nu_{\boldsymbol{\mu}},q)$.

On one hand, we simplify their example by choosing $c=1/2$ and $d=1/4$ in order to make all calculations completely explicit. On the other hand, we handle all $\boldsymbol{\mu}=(u,1-u)$ in order to uncover an interesting phase transition by varying the parameter $u$. Due to symmetry, we assume without loss of generality that $u\in[1/2,1)$. Define $s$ to be the unique solution of $(1/2)^s+(1/4)^s=1$, i.e. $s=\log\big( (\sqrt{5}-1)/2\big) / \log(1/2)$.

\begin{prop}
The $L^q$ spectrum of the Bara\'nski carpet defined above is given by the following formula:
\begin{itemize}
\item if $u\in\big[\frac{1}{2}, \frac{1}{2^s}\big)$, then
\begin{equation*}
T(\nu_{\boldsymbol{\mu}},q) = 
\begin{cases*}
T_u^{\omega}(q) & if\, $q\leq 0$,\\
T_u^{\sigma}(q) & if\, $0<q\leq \frac{\log 2}{\log \frac{1-u}{u^2}}$,\\
\frac{2}{3}+\frac{\log( u(1-u))}{3 \log 2}\, q & if\, $q> \frac{\log 2}{\log \frac{1-u}{u^2}}$;
\end{cases*}
\end{equation*}
\item if $u\in \big[\frac{1}{2^s},1\big)$, then
\begin{equation*}
T(\nu_{\boldsymbol{\mu}},q) = 
\begin{cases*}
	T_u^{\omega}(q) & if\, $q\leq 0$,\\
	T_u^{\sigma}(q) & if\, $q>0$.
\end{cases*}
\end{equation*}
\end{itemize}
\end{prop}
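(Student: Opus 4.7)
The plan is to combine Theorem~\ref{thm:Lqmain} with the variational formula of Theorem~\ref{thm:main1} applied to both orderings $\sigma=(1,2)$ and $\omega=(2,1)$ making up $\mathcal{A}$, so that
\[
T(\nu_{\boldsymbol{\mu}},q)\;=\;\max\Big\{\sup_{\mathbf{P}_{\!\sigma}\in\mathcal{Q}^{\sigma}} t_{\sigma}(\mathbf{P}_{\!\sigma};q),\ \sup_{\mathbf{P}_{\!\omega}\in\mathcal{Q}^{\omega}} t_{\omega}(\mathbf{P}_{\!\omega};q)\Big\}.
\]
Because $f_{1}$ and $f_{2}$ have non-zero translations in both coordinate directions, no pair overlaps exactly on any principal subspace; hence $\mathcal{I}_{n}^{\sigma}=\mathcal{I}_{n}^{\omega}=\{1,2\}$ and $\boldsymbol{\mu}_{n}^{\sigma}=\boldsymbol{\mu}_{n}^{\omega}=\boldsymbol{\mu}$ for every $n$, the recursion~\eqref{eq:31} collapses to $T_{1}^{\boldsymbol{\mu},\sigma}=T_{2}^{\boldsymbol{\mu},\sigma}=:T_{u}^{\sigma}(q)$ (analogously for $\omega$), and the coordinate swap $(x,y)\mapsto(y,x)$ yields the symmetry $T_{u}^{\omega}(q)=T_{1-u}^{\sigma}(q)$, which I use to transport arguments between the two orderings.

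Proposition~\ref{prop:main} then supplies the unique unconstrained maximiser $\mathbf{P}_{\sigma}^{\ast}$ with $\mathbf{p}_{\sigma_{1}}^{\ast}=\mathbf{p}_{\sigma_{2}}^{\ast}=:\mathbf{p}^{\ast}$ and
\[
p^{\ast}(1)=u^{q}(1/2)^{T_{u}^{\sigma}(q)},\qquad p^{\ast}(2)=(1-u)^{q}(1/4)^{T_{u}^{\sigma}(q)},\qquad t_{\sigma}(\mathbf{P}_{\sigma}^{\ast};q)=T_{u}^{\sigma}(q).
\]
In dimension two the constraint $C_{1}^{\sigma}\ge 0$ reduces to $\chi_{2}^{\sigma}(\mathbf{p}^{\ast})\ge\chi_{1}^{\sigma}(\mathbf{p}^{\ast})$, equivalently $p^{\ast}(1)\ge 1/2$. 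Requiring $p^{\ast}(1)=p^{\ast}(2)=1/2$ together with normalisation pins down the transition
\[
q^{\ast}(u)\;=\;\frac{\log 2}{\log((1-u)/u^{2})},
\]
which is positive precisely when $u<1/2^{s}$. Since the algebraic system $\{p^{\ast}(1)=p^{\ast}(2)=1/2\}$ has this unique $(q,T)$-solution, the crossing $p^{\ast}(1;q)=1/2$ occurs at the single value $q=q^{\ast}(u)$; together with $p^{\ast}(1;0)=(1/2)^{s}>1/2$ and the large-$|q|$ asymptotics of $u^{q}(1/2)^{T}+(1-u)^{q}(1/4)^{T}=1$, this fixes the sign of $p^{\ast}(1;q)-1/2$ on each of the two half-lines.

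When $\mathbf{P}_{\sigma}^{\ast}\notin\mathcal{Q}^{\sigma}$ the supremum of $t_{\sigma}$ over $\mathcal{Q}^{\sigma}$ is attained on the face $\{C_{1}^{\sigma}=0\}$, which forces $\mathbf{p}_{\sigma_{2}}=(1/2,1/2)$ and annihilates the $\mathbf{p}_{\sigma_{1}}$-dependence of $t_{\sigma}$; substituting $H((1/2,1/2))=\log 2$, $\chi_{2}^{\sigma}((1/2,1/2))=\tfrac{3}{2}\log 2$, and $\int\!\log\boldsymbol{\mu}\,\mathrm{d}(1/2,1/2)=\tfrac{1}{2}\log(u(1-u))$ produces the boundary value
\[
L(q)\;=\;\frac{2}{3}+\frac{\log(u(1-u))}{3\log 2}\,q
\]
of the proposition; by the coordinate symmetry the $\omega$-boundary value equals $L(q)$ as well. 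To compare $T_{u}^{\sigma},T_{u}^{\omega},L$ I will subtract the two defining equations at a common $T>0$,
\[
\bigl[u^{q}(1/2)^{T}+(1-u)^{q}(1/4)^{T}\bigr]-\bigl[u^{q}(1/4)^{T}+(1-u)^{q}(1/2)^{T}\bigr]=\bigl((1/2)^{T}-(1/4)^{T}\bigr)\bigl(u^{q}-(1-u)^{q}\bigr),
\]
whose sign matches that of $q$ when $u\ge 1/2$; combined with the strict monotonicity of each LHS in $T>0$ this yields $T_{u}^{\sigma}(q)\ge T_{u}^{\omega}(q)$ for $q\ge 0$ and the reverse for $q\le 0$. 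Since $q^{\ast}(1-u)>0$ for every $u\ge 1/2$, the $\omega$-dominant type lies in $\mathcal{Q}^{\omega}$ throughout $q\le 0$, so the $\omega$-supremum equals $T_{u}^{\omega}(q)$ there and dominates the $\sigma$-contribution; this handles $q\le 0$. The remaining two regimes follow by combining the above inequalities with the tautological $T_{u}^{\sigma}\ge L$ (equality only at $q^{\ast}(u)$) and splitting according to whether $u\lessgtr 1/2^{s}$. The main obstacle is justifying that the constrained supremum on $\mathcal{Q}^{\sigma}$ really is attained on the interior face $\{C_{1}^{\sigma}=0\}$ rather than on a lower-dimensional stratum of $\partial\mathcal{P}^{\sigma}$; I handle this via Proposition~\ref{prop:main} to exclude further interior critical points together with a direct Lagrange-multiplier check on each face of $\partial\mathcal{P}^{\sigma}$ verifying its contribution is bounded by $L(q)$.
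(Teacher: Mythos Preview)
Your overall strategy coincides with the paper's: apply Theorem~\ref{thm:Lqmain} and Theorem~\ref{thm:main1}, compute $\mathbf{P}^{\ast}$ for each ordering, determine when $\mathbf{P}^{\ast}\in\mathcal{Q}$, and on the complementary range identify the constrained supremum with the boundary value $L(q)$. The paper does this via the explicit closed form~\eqref{eq:44} and a direct monotonicity check on the diagonal restriction $t(r)$; your route via the unique crossing $q^{\ast}(u)$, large--$|q|$ asymptotics, and a boundary/Lagrange argument is a legitimate variant.

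There is, however, a genuine sign error in your comparison of $T_u^{\sigma}$ and $T_u^{\omega}$. You write that the sign of
\[
\bigl((1/2)^{T}-(1/4)^{T}\bigr)\bigl(u^{q}-(1-u)^{q}\bigr)
\]
``matches that of $q$ when $u\ge 1/2$'' and deduce $T_u^{\sigma}(q)\ge T_u^{\omega}(q)$ for all $q\ge 0$. But $(1/2)^{T}-(1/4)^{T}$ has the sign of $T$, not $+1$; since $T_u^{\sigma}(1)=T_u^{\omega}(1)=0$ and both functions are negative for $q>1$, the product has the sign of $qT$, not of $q$. The correct conclusion (which the paper states after its ``tedious calculations'') is $T_u^{\sigma}(q)\ge T_u^{\omega}(q)$ precisely for $q\in[0,1]$ and the reverse for $q\notin[0,1]$. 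This matters: for $q\in(1,q^{\ast}(u)]$ with $u<1/2^{s}$ you cannot conclude $T_u^{\sigma}\ge T_u^{\omega}$, and your argument as written breaks. The fix is to observe that $q^{\ast}(1-u)<1$ for every $u\ge 1/2$, so for $q>1$ one always has $\mathbf{P}_{\omega}^{\ast}\notin\mathcal{Q}^{\omega}$ and hence $\sup_{\mathcal{Q}^{\omega}}t_{\omega}=L(q)\le T_u^{\sigma}(q)$; the direct comparison of $T_u^{\sigma}$ with $T_u^{\omega}$ is then never needed beyond $[0,1]$.

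A second point: your claim that the constrained supremum is attained on $\{C_1=0\}$ is correct but not quite for the reason you give. Proposition~\ref{prop:main} yields a unique \emph{global} maximiser, not a unique critical point, so ``exclude further interior critical points'' requires more. The cleanest justification uses Lemmas~\ref{lem:fnsigma}--\ref{lem:tpsigma}: for fixed $\mathbf{p}_{\sigma_2}$ the optimal $\mathbf{p}_{\sigma_1}$ is $\mathbf{p}_{\sigma_1}^{\ast}$, and then $t_{\sigma}(\mathbf{p}_{\sigma_2};\mathbf{p}_{\sigma_1}^{\ast})=T_u^{\sigma}-H(\mathbf{p}_{\sigma_2}\|\mathbf{p}_{\sigma_2}^{\ast})/\chi_2^{\sigma}(\mathbf{p}_{\sigma_2})$; a short calculus argument (the derivative of the quotient has the sign of $r-r^{\ast}$) shows this is monotone on the constrained half-interval, forcing the maximum to $r=1/2$. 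The paper instead checks monotonicity of the \emph{diagonal} restriction $t(r)$, which gives the same boundary value but strictly speaking only a lower bound for the full constrained supremum; your boundary argument, once repaired, is actually cleaner here.
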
 
There is a point of non-differentiability at $q=0$ for every value of $u$. Moreover, if $u\in\big[\frac{1}{2}, \frac{1}{2^s}\big)$, then there is a further point of interest at $q=\frac{\log 2}{\log \frac{1-u}{u^2}}$, where $T(\nu_{\boldsymbol{\mu}},q)$ is differentiable but no further derivative exists. This answers~\cite[Question 3.10]{FraserMorrisetal_Nonlin21}. As $u\to (1/2)^s$, this phase transition ``escapes'' to $\infty$, explaining why it ``disappears'' for $u\geq (1/2)^s$.
\begin{proof}
Applying definition~\eqref{eq:31}, the function $T_u^{\sigma}(q)$ satisfies the equation
\begin{equation*}
u^q\cdot \Big(\frac{1}{2}\Big)^{T_u^{\sigma}(q)} + (1-u)^q\cdot \Big(\frac{1}{2}\Big)^{2T_u^{\sigma}(q)} = 1,
\end{equation*} 
from which after algebraic manipulations one obtains the explicit formula
\begin{equation}\label{eq:44}
T_u^{\sigma}(q) = \frac{-1}{\log 2} \left( q\cdot \log \Big(\frac{u}{1-u}\Big) + \log\left( \frac{1}{2}\sqrt{1+4\Big(\frac{1-u}{u^2}\Big)^q}  - \frac{1}{2}\right)\right). 
\end{equation} 
Moreover, $T_u^{\omega}(q) =T_{1-u}^{\sigma}(q)$. Some tedious calculations show that
\begin{equation*}
T_u^{\omega}(q)  \leq T_{u}^{\sigma}(q) \;\Longleftrightarrow\; q\in[0,1], \text{ with } T_u^{\omega}(q) = T_{u}^{\sigma}(q) \;\Longleftrightarrow\; q\in\{0,1\}.
\end{equation*}
The dominant types $\mathbf{P}_{\!\sigma,u}^{\ast}=(\mathbf{p}_{\sigma_2}^{\ast},\mathbf{p}_{\sigma_1}^{\ast})$ and $\mathbf{P}_{\!\omega,u}^{\ast}=(\mathbf{p}_{\omega_3}^{\ast},\mathbf{p}_{\omega_1}^{\ast})$ from~\eqref{eq:14} are
\begin{equation*}
\mathbf{p}_{\sigma_1}^{\ast} = \mathbf{p}_{\sigma_2}^{\ast} = \bigg( u^q\cdot \Big(\frac{1}{2}\Big)^{T_u^{\sigma}(q)}, (1-u)^q\cdot \Big(\frac{1}{2}\Big)^{2T_u^{\sigma}(q)} \bigg)
\end{equation*}
and
\begin{equation*}
\mathbf{p}_{\omega_1}^{\ast} = \mathbf{p}_{\omega_2}^{\ast} = \bigg( u^q\cdot \Big(\frac{1}{2}\Big)^{2T_u^{\omega}(q)}, (1-u)^q\cdot \Big(\frac{1}{2}\Big)^{T_u^{\omega}(q)} \bigg).
\end{equation*}
The main task is to determine when $\mathbf{P}_{\!\sigma,u}^{\ast}\in\mathcal{Q}^{\sigma}$ and $\mathbf{P}_{\!\omega,u}^{\ast}\in\mathcal{Q}^{\omega}$. Since $\mathbf{p}_{\sigma_1}^{\ast} = \mathbf{p}_{\sigma_2}^{\ast}$ and $\mathbf{p}_{\omega_1}^{\ast} = \mathbf{p}_{\omega_2}^{\ast}$, it is enough to consider types of the form $\big((r,1-r);(r,1-r)\big)$. Simple application of~\eqref{eq:15} yields that
\begin{equation*}
C_1^{(2),\sigma}(r) = \Big( \frac{1}{2-r} - \frac{1}{1+r} \Big)\frac{1}{\log 2} \geq 0 \;\Longleftrightarrow\; r\geq \frac{1}{2}
\end{equation*} 
and $C_1^{(2),\omega}(r) = -C_1^{(2),\sigma}(r)\geq 0 \;\Longleftrightarrow\; r\leq 1/2$. Therefore,
\begin{equation*}
\mathbf{P}_{\!\sigma,u}^{\ast}\in\mathcal{Q}^{\sigma} \;\;\Longleftrightarrow\;\; u^q\cdot \Big(\frac{1}{2}\Big)^{T_u^{\sigma}(q)} \geq \frac{1}{2} \;\;\text{ and }\;\; \mathbf{P}_{\!\omega,u}^{\ast}\in\mathcal{Q}^{\omega} \;\;\Longleftrightarrow\;\; (1-u)^q\cdot \Big(\frac{1}{2}\Big)^{T_u^{\omega}(q)} \geq \frac{1}{2}.
\end{equation*}
Using formula~\eqref{eq:44}, we obtain the following equivalences,
\begin{equation*}
\mathbf{P}_{\!\sigma,u}^{\ast}\in\mathcal{Q}^{\sigma} \;\;\Longleftrightarrow\;\; 
\begin{cases*}
q\leq \frac{\log 2}{\log \frac{1-u}{u^2}}, & if  $u\in\big[\frac{1}{2}, \frac{1}{2^s}\big]$\\
q\geq \frac{\log 2}{\log \frac{1-u}{u^2}}, & if  $u\in\big(\frac{1}{2^s},1\big)$
\end{cases*}
\end{equation*}
and
\begin{equation*}
\mathbf{P}_{\!\omega,u}^{\ast}\in\mathcal{Q}^{\omega} \;\;\Longleftrightarrow\;\; q\leq \frac{\log 2}{\log \frac{u}{(1-u)^2}}\; \text{ for every } u\in\Big[ \frac{1}{2},1\Big).
\end{equation*}
We can now determine $T(\nu_{\boldsymbol{\mu}},q)$ for $q\leq 1$. If $q\leq 0$, then $T_u^{\omega}(q) > T_{u}^{\sigma}(q)$ and $\mathbf{P}_{\!\omega,u}^{\ast}\in\mathcal{Q}^{\omega}$, hence, $T(\nu_{\boldsymbol{\mu}},q) = T_u^{\omega}(q)$. If $q\in[0,1]$, then $T_{u}^{\sigma}(q)\geq T_{u}^{\omega}(q)$ and $\mathbf{P}_{\!\sigma,u}^{\ast}\in\mathcal{Q}^{\sigma}$, hence, $T(\nu_{\boldsymbol{\mu}},q) = T_u^{\sigma}(q)$.

If $q>1$, then $\mathbf{P}_{\!\omega,u}^{\ast}\notin\mathcal{Q}^{\omega}$ for all $u\in[1/2,1)$. We abbreviate $\underline{r}* \log \underline{u} = r \log u + (1-r)\log(1-u)$. For fixed $q>1$ and $u\in[1/2,1)$, we need to maximise
\begin{equation*}
t(r) = \big( C_2^{(2),\omega} + C_1^{(2),\omega}\big) \big(-\underline{r}*\log\underline{r}+q\cdot \underline{r}* \log \underline{u}\big) = \frac{-\underline{r}*\log\underline{r}+q\cdot \underline{r}* \log \underline{u}}{(1+r)\log 2}
\end{equation*}
with respect to $r$ directly using types $\big((r,1-r);(r,1-r)\big)$ with $r\leq 1/2$. Elementary calculus shows that $t(r)$ is strictly increasing on $(0,1/2]$, so
\begin{equation*}
\sup _{\mathbf{P}_{\!\omega} \in \mathcal{Q}^{\omega}}\, t(\mathbf{P}_{\omega}) = \max_{r\in(0,1/2]} t(r) = t(1/2) = \frac{2}{3}+\frac{\log( u(1-u))}{3 \log 2}\cdot q.
\end{equation*}
If $1/2\leq u\leq (1/2)^s$ and $q\geq \log 2 / \log \frac{1-u}{u^2}\geq 1$, then $\mathbf{P}_{\!\sigma,u}^{\ast}\notin\mathcal{Q}^{\sigma}$. An analogous calculation shows that in this case as well $\sup _{\mathbf{P}_{\!\sigma} \in \mathcal{Q}^{\sigma}}\, t(\mathbf{P}_{\sigma}) = t(1/2)$. We leave it to the reader to check that
\begin{equation*}
\frac{2}{3}+\frac{\log( u(1-u))}{3 \log 2}\cdot q \leq \min\{ T_u^{\sigma}(q), T_u^{\omega}(q) \}
\end{equation*}
with equality with $T_u^{\sigma}(q)$ if and only if $q= \log 2 / \log \frac{1-u}{u^2}$ and equality with $T_u^{\omega}(q)$ if and only if $q= \log 2 /\log \frac{u}{(1-u)^2}$. The formula for $T(\nu_{\boldsymbol{\mu}},q)$ follows.
\end{proof}

\subsection{A Bara\'nski sponge in three dimensions}\label{sec:3DBaranski}

This example appeared in~\cite[Section 9]{fraserJurga2021AdvMath}. Let $0<1/N<c<b<a<d=1-b<1$ with $a+c<1$ and consider the affine IFS with maps $f_i(x)=A_i x +t_i$, where
\begin{align*}
A_i &= 	\mathrm{diag}(a,b,1/N), \quad t_i=(0,0,(i-1)/N) \;\text{ for } i=1,\ldots,N; \\
A_{N+1} &= 	\mathrm{diag}(c,d,1/N), \quad t_{N+1}=(1-c,b,0).
\end{align*} 
The attractor $F$ is a Bara\'nski sponge, recall Example~\ref{ex:HigherDImCarpets} and projection to the $xy$-plane is a Bara\'nski carpet.

Fraser and Jurga introduce a pressure function $\widehat{P}$ using `modified singular value functions' and show that the unique $s_0$ which satisfies $\widehat{P}(s_0)=1$ is always an upper bound for $\overline{\dim}_B F$ which can be strict for particular choices of parameters in this example. We now show why this happens. Their pressure in this example is
\begin{equation}\label{eq:41}
\widehat{P}(s) = N^{1-s} \cdot \max \big\{ N a^t b^{1-t}+c^t d^{1-t}, Nb+d \big\},
\end{equation}   
where $t$ satisfies $a^t+c^t=1$.

Now let us apply our notation and results. Since we are only interested in the box dimension, we simplify notation in~\eqref{eq:31} to $T_n^{\sigma}= T_n^{\boldsymbol{\mu},\sigma}(0)$ for $n=0,1,2,3$.  First observe that contraction along the $z$-axis is the strongest, hence, the only two orderings in $\mathcal{A}$ are $\sigma=(1,2,3)$ and $\omega=(2,1,3)$. Furthermore, $\mathcal{I}_1^{\sigma}=\mathcal{I}_2^{\sigma} = \{1,N+1\} = \mathcal{I}_1^{\omega}=\mathcal{I}_2^{\omega}$ and $\mathcal{I}_3^{\sigma}=\mathcal{I}=\mathcal{I}_3^{\omega}$. Applying~\eqref{eq:31}, we obtain $T_1^{\sigma}=t=T_2^{\sigma}$ and $T_1^{\omega}=1=T_2^{\omega}$, moreover, $T_3^{\sigma}$ and $T_3^{\omega}$ are the solutions to
\begin{equation}\label{eq:43}
N^{t-T_3^{\sigma}} \big( N a^t +c^t \big) =1 \;\text{ and }\; N^{1-T_3^{\omega}} \big( Nb+d \big) =1,
\end{equation}
respectively. From here, we get the closed forms
\begin{equation*}
T_3^{\sigma} = t+\frac{\log(N a^t+c^t)}{\log N} \;\text{ and }\; T_3^{\omega} = 1+\frac{\log(N b+d)}{\log N}.
\end{equation*}
Corollary~\ref{cor:Lqmain} implies that $\dim_B F\leq \max\{T_3^{\sigma},T_3^{\omega}\}$. Comparing~\eqref{eq:41} with~\eqref{eq:43}, some algebraic manipulations yield that $\max\{T_3^{\sigma},T_3^{\omega}\}\leq s_0$. More precisely, if $T_3^{\sigma}\leq T_3^{\omega}$, then $\max\{T_3^{\sigma},T_3^{\omega}\}= s_0$, however, if $T_3^{\sigma}> T_3^{\omega}$, then $\max\{T_3^{\sigma},T_3^{\omega}\}< s_0$ indicating that $s_0$ is not the correct value. Therefore, the qualitative difference between the two approaches is that distinguishing between the orderings is a necessary and crucial new feature of our method.

\begin{conjecture}
For the Bara\'nski sponge in this section, $\dim_{\mathrm{B}} F = \max\{ T_3^{\sigma},T_3^{\omega}\}$.
\end{conjecture}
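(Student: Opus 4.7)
Since the upper bound $\dim_{\mathrm{B}} F \leq \max\{T_3^{\sigma}, T_3^{\omega}\}$ is already in place from Corollary~\ref{cor:Lqmain}, the plan is to establish the matching lower bound via the general framework of Section~\ref{sec:01}. Taking $\boldsymbol{\varphi} = \boldsymbol{0}$ in Theorem~\ref{thm:Lqmain} and then combining the variational principle of Theorem~\ref{thm:main1} with Proposition~\ref{prop:main} gives
\[
\dim_{\mathrm{B}} F \;=\; \max_{\tau \in \mathcal{A}}\, \sup_{\mathbf{P}_\tau \in \mathcal{Q}^{\tau}} t(\mathbf{P}_\tau) \quad\text{ and }\quad \sup_{\mathbf{P}_\tau \in \mathcal{P}^{\tau}} t(\mathbf{P}_\tau) \;=\; T_3^{\tau},
\]
with the unconstrained supremum attained uniquely at the explicit $\mathbf{P}^*_\tau$ defined by~\eqref{eq:14}. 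By Corollary~\ref{cor:main} it suffices to verify $\mathbf{P}^*_\tau \in \mathcal{Q}^{\tau}$ for some $\tau \in \{\sigma,\omega\}$ realising $\max\{T_3^{\sigma}, T_3^{\omega}\}$. I would carry out this verification for \emph{both} orderings, so that whichever one wins the competition is covered automatically.

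For $\omega = (2,1,3)$, equation~\eqref{eq:14} gives $T_1^{\omega} = T_2^{\omega} = 1$, hence $\mathbf{p}^*_{\omega_1} = \mathbf{p}^*_{\omega_2} = (b,d)$ on $\{1, N+1\}$ and $\mathbf{p}^*_{\omega_3}$ places mass $bN^{1-T_3^{\omega}}$ on each of $1,\ldots,N$ and $dN^{1-T_3^{\omega}}$ on $N+1$; the analogous formulas hold for $\sigma = (1,2,3)$ with $(a^t, c^t)$ in place of $(b,d)$. Every required Lyapunov exponent $\chi_n^{\tau}(\mathbf{p}^*_{\tau_m})$ is then an explicit convex combination of $-\log a, -\log b, -\log c, -\log d$ and $\log N$. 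The first constant $C_3^{(3),\tau}(\mathbf{P}^*_\tau) = 1/\log N$ is trivially positive, and $C_2^{(3),\tau}(\mathbf{P}^*_\tau) \geq 0$ reduces to $\chi_{\tau_2}^{\tau}(\mathbf{p}^*_{\tau_3}) \leq \log N$, which follows from the standing hypotheses $1/N < c < b < a$ together with $d = 1-b > c > 1/N$ (the latter using $a + c \leq 1$).

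The heart of the matter is $C_1^{(3),\tau}(\mathbf{P}^*_\tau) \geq 0$. Substituting into~\eqref{eq:15} and clearing denominators rearranges this inequality to
\[
\frac{\log N - \chi_{\tau_1}^{\tau}(\mathbf{p}^*_{\tau_3})}{\chi_{\tau_1}^{\tau}(\mathbf{p}^*_{\tau_1})} \;\geq\; \frac{\log N - \chi_{\tau_2}^{\tau}(\mathbf{p}^*_{\tau_3})}{\chi_{\tau_2}^{\tau}(\mathbf{p}^*_{\tau_2})}.
\]
Using the defining relations~\eqref{eq:43}, the relevant weights on $\{1,\ldots,N\}$ and $N+1$ take the form $Nb/(Nb+d)$, $d/(Nb+d)$ for $\tau = \omega$, and $Na^t/(Na^t+c^t)$, $c^t/(Na^t+c^t)$ for $\tau = \sigma$, turning the display into a polynomial inequality in $a,b,c,d,N,t$ constrained by $a+c \leq 1$, $d = 1-b$ and $a^t + c^t = 1$. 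Numerical checks support the expectation that the winning ordering always satisfies this inequality, with the guiding heuristic that the ordering maximising $T_3^\tau$ is precisely the one for which the effective contractions under $\mathbf{p}^*_{\tau_3}$ are self-consistent with the $\tau$-ordering. Proving this analytically over the full admissible parameter range is the main obstacle I foresee.

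If a direct polynomial-inequality proof turns out to be stubborn, I would fall back on a more combinatorial route: at each scale $\delta$, construct an explicit family of $\tau$-ordered words whose symbolic $\delta$-approximate cubes realise a digit-frequency profile close to $\mathbf{P}^*_\tau$, and apply the method-of-types counting from the proof of Theorem~\ref{thm:main1} directly to show that the number of such cubes is at least $\delta^{-T_3^\tau + o(1)}$. This bypasses $\mathcal{Q}^{\tau}$-membership in favour of a direct cardinality estimate, though under the hood it still hinges on the same quantitative comparison between the Lyapunov exponents produced by $\mathbf{P}^*_\tau$ and $\log N$. In either approach, a clean answer to Question~\ref{ques:2} for this two-ordering family would immediately settle the conjecture.
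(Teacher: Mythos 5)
There is a genuine gap, and it is worth being precise about where it sits, because the statement you are addressing is deliberately left as a \emph{conjecture} in the paper: the paper only sketches the same reduction you propose and then reports an exhaustive numerical search, exactly because the decisive step is unproven. Your reduction via Corollary~\ref{cor:main} is the right one, but your plan to ``carry out this verification for \emph{both} orderings'' cannot succeed as stated: the paper points out that $\{(a,b,c,N):\, \mathbf{P}_{\!\sigma}^{\ast}\notin\mathcal{Q}^{\sigma}\}\neq\emptyset$ and $\{(a,b,c,N):\, \mathbf{P}_{\!\omega}^{\ast}\notin\mathcal{Q}^{\omega}\}\neq\emptyset$, so the inequality $C_1^{(3),\tau}(\mathbf{P}_{\!\tau}^{\ast})\geq 0$ genuinely fails for part of the admissible parameter range for each ordering. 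What must actually be shown is the conditional statement (the paper's conditions (1)--(3)): the two failure sets are disjoint, and whenever $\mathbf{P}_{\!\tau}^{\ast}\notin\mathcal{Q}^{\tau}$ the \emph{other} ordering attains $\max\{T_3^{\sigma},T_3^{\omega}\}$ and has its dominant type in its own $\mathcal{Q}$. Your ``guiding heuristic'' is precisely this claim, but you offer only numerical support for it, which is the same evidence the paper already has; so the proposal does not close the conjecture, it restates the open step.

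Your fallback route does not bypass the obstruction either. The types of $\delta$-approximate cubes are, by Lemma~\ref{lem:2}, automatically in $\mathcal{Q}^{\tau}$ (and by Lemma~\ref{lem:4} they are dense only in $\mathcal{Q}^{\tau}$, not in $\mathcal{P}^{\tau}$), since the block lengths $L_{\delta}(\ii,\tau_n)-L_{\delta}(\ii,\tau_{n+1})$ are nonnegative. Hence when $\mathbf{P}_{\!\tau}^{\ast}\notin\mathcal{Q}^{\tau}$ there simply are no $\tau$-ordered cubes with digit frequencies close to $\mathbf{P}_{\!\tau}^{\ast}$, and a direct method-of-types count can never produce $\delta^{-T_3^{\tau}+o(1)}$ cubes; at best it recovers $\sup_{\mathbf{P}_{\!\tau}\in\mathcal{Q}^{\tau}}t(\mathbf{P}_{\!\tau})$, which is then strictly smaller than $T_3^{\tau}$. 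So both of your routes reduce to the same unproven polynomial inequality over the full parameter range (with the constraint $a^t+c^t=1$), and until that inequality, or the paper's conditions (2) and (3), is established analytically, the conjecture remains open.
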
 

We give a sketch of a possible proof of this conjecture. It is straightforward to determine the dominant types $\mathbf{P}_{\!\sigma}^{\ast}=(\mathbf{p}_{\sigma_3}^{\ast},\mathbf{p}_{\sigma_2}^{\ast},\mathbf{p}_{\sigma_1}^{\ast})$ and $\mathbf{P}_{\!\omega}^{\ast}=(\mathbf{p}_{\omega_3}^{\ast},\mathbf{p}_{\omega_3}^{\ast},\mathbf{p}_{\omega_1}^{\ast})$ from~\eqref{eq:14},
\begin{equation*}
\mathbf{p}_{\sigma_1}^{\ast}=\mathbf{p}_{\sigma_2}^{\ast} = (a^t,c^t), \;\text{ and }\; \mathbf{p}_{\sigma_3}^{\ast} = \Big( \underbrace{\frac{a^t}{N a^t+c^t}\,,\ldots,\,\frac{a^t}{N a^t+c^t}}_{N \text{ times}},\frac{c^t}{N a^t+c^t} \Big),
\end{equation*}
moreover,
\begin{equation*}
\mathbf{p}_{\omega_1}^{\ast}=\mathbf{p}_{\omega_2}^{\ast} = (b,d), \;\text{ and }\; \mathbf{p}_{\omega_3}^{\ast} = \Big( \underbrace{\frac{b}{N b+d}\,,\ldots,\,\frac{b}{N b+d}}_{N \text{ times}},\frac{d}{N b+d} \Big).
\end{equation*}
The main task is to determine for which parameters $(a,b,c,N)$ is $\mathbf{P}_{\!\sigma}^{\ast}\in\mathcal{Q}^{\sigma}$ and $\mathbf{P}_{\!\omega}^{\ast}\in\mathcal{Q}^{\omega}$, i.e. when is  $C_n^{(3),\sigma}(\mathbf{P}_{\!\sigma}^{\ast})\geq 0$ for $n=1,2,3$ and same for $\omega$. This automatically holds for $C_3^{(3),\sigma}(\mathbf{P}_{\!\sigma}^{\ast})$ and also easy for $C_2^{(3),\sigma}(\mathbf{P}_{\!\sigma}^{\ast})$ since $1/N$ is the strongest contraction. It is much more cumbersome to check that $C_1^{(3),\sigma}(\mathbf{P}_{\!\sigma}^{\ast})\geq 0$ and $C_1^{(3),\omega}(\mathbf{P}_{\!\omega}^{\ast})\geq 0$. Both $C_1^{(3),\sigma}(\mathbf{P}_{\!\sigma}^{\ast})$ and $C_1^{(3),\omega}(\mathbf{P}_{\!\omega}^{\ast})$ are functions of $(a,b,c,N)$. To prove the conjecture, it is enough to verify the following three things:
\begin{enumerate}
\item $\{(a,b,c,N):\, \mathbf{P}_{\!\sigma}^{\ast}\notin\mathcal{Q}^{\sigma} \text{ and } \mathbf{P}_{\!\omega}^{\ast}\notin\mathcal{Q}^{\omega}\}=\emptyset$ (otherwise $\dim_B F < \max\{ T_3^{\sigma},T_3^{\omega}\}$);
\item if $(a,b,c,N)$ is such that $\mathbf{P}_{\!\sigma}^{\ast}\notin\mathcal{Q}^{\sigma}$, then $T_3^{\omega}\geq T_3^{\sigma}$;
\item if $(a,b,c,N)$ is such that $\mathbf{P}_{\!\omega}^{\ast}\notin\mathcal{Q}^{\omega}$, then $T_3^{\sigma}\geq T_3^{\omega}$;
\end{enumerate}
Verifying these seems possible but certainly tedious. Instead, we conducted an exhaustive search on the parameter space to see whether we can find a counterexample. Using \emph{Mathematica 13.1}, we chose $N=100$ up to $1000$ with increments of $50$, furthermore, $0.02\leq c\leq 0.49$, $c+0.01\leq b\leq 0.5$ and $b+0.01\leq a\leq 1-c-0.01$ all with increments of $0.01$. For all instances we found that all three conditions are true, supporting the conjecture. We note that $\{(a,b,c,N):\, \mathbf{P}_{\!\sigma}^{\ast}\notin\mathcal{Q}^{\sigma}\}\neq \emptyset$ and also $\{(a,b,c,N):\, \mathbf{P}_{\!\omega}^{\ast}\notin\mathcal{Q}^{\omega}\}\neq \emptyset$, so (2) and (3) are not empty statements.

\section{Proof of Proposition~\ref{prop:main} and~\ref{prop:maxminexponent}}\label{sec:03}

The proof of these two propositions follow a very similar argument, therefore, we present them side-by-side. Recall notation from Section~\ref{sec:01} and~\ref{sec:02}. In particular, 
\begin{equation}\label{eq:36}
t(\mathbf{P}_{\!\sigma}) = t(\mathbf{p}_{\sigma_d};\ldots;\mathbf{p}_{\sigma_1}) = \sum_{n=1}^{d} 
C_{n}^{(d), \sigma}(\mathbf{P}_{\!\sigma}) \cdot\bigg(H(\mathbf{p}_{\sigma_{n}}) + \int\! \varphi_n^{\sigma}\,\mathrm{d}\mathbf{p}_{\sigma_n}\bigg),
\end{equation}
and
\begin{equation}\label{eq:360}
S(\mathbf{P}_{\!\sigma}) = S(\mathbf{p}_{\sigma_d};\ldots;\mathbf{p}_{\sigma_1}) = -\sum_{n=1}^{d} 
C_{n}^{(d), \sigma}(\mathbf{P}_{\!\sigma}) \cdot \int\! \log \boldsymbol{\mu}_n^{\sigma}\,\mathrm{d}\mathbf{p}_{\sigma_n}.
\end{equation}
For $\sigma\in\mathcal{A}$, $1\leq n\leq d$ and $\mathbf{p}_{\sigma_{n}}\in\mathcal{P}_n^{\sigma}$ let
\begin{equation*}
f_{n}^{\sigma}(\mathbf{p}_{\sigma_{n}})\coloneqq \frac{1}{\chi_{n}^{\sigma}(\mathbf{p}_{\sigma_{n}})} \bigg(H(\mathbf{p}_{\sigma_{n}}) + \int\! \varphi_n^{\sigma}\,\mathrm{d}\mathbf{p}_{\sigma_n} - \sum_{k=1}^{n-1}\chi_k^{\sigma}(\mathbf{p}_{\sigma_n})\big( T_k^{\sigma}-T_{k-1}^{\sigma} \big)\bigg),
\end{equation*}
and
\begin{align*}
g_{n}^{\sigma}(\mathbf{p}_{\sigma_{n}}) &\coloneqq \frac{-1}{\chi_n^{\sigma}(\mathbf{p}_{\sigma_{n}})} \bigg( \int\! \log \boldsymbol{\mu}_n^{\sigma}\,\mathrm{d}\mathbf{p}_{\sigma_n} + \sum_{k=1}^{n-1}\chi_k^{\sigma}(\mathbf{p}_{\sigma_n})\big( \overline{S}_{k}^{\boldsymbol{\mu},\sigma}-\overline{S}_{k-1}^{\boldsymbol{\mu},\sigma} \big)\bigg), \\
h_{n}^{\sigma}(\mathbf{p}_{\sigma_{n}}) &\coloneqq \frac{-1}{\chi_n^{\sigma}(\mathbf{p}_{\sigma_{n}})} \bigg( \int\! \log \boldsymbol{\mu}_n^{\sigma}\,\mathrm{d}\mathbf{p}_{\sigma_n} + \sum_{k=1}^{n-1}\chi_k^{\sigma}(\mathbf{p}_{\sigma_n})\big( \underline{S}_{k}^{\boldsymbol{\mu},\sigma}-\underline{S}_{k-1}^{\boldsymbol{\mu},\sigma} \big)\bigg).
\end{align*}
For $n=1$ the empty sum is taken to equal 0. 

\begin{lemma}\label{lem:fnsigma}
For every $\sigma\in\mathcal{A}$ and $1\leq n\leq d$,
\begin{equation*}
\sup_{\mathbf{p} \in \mathcal{P}_n^{\sigma}} f_{n}^{\sigma}(\mathbf{p}) = f_{n}^{\sigma}(\mathbf{p}_{\sigma_{n}}^{\ast}) = T_n^{\sigma}-T_{n-1}^{\sigma},
\end{equation*}
moreover,
\begin{equation*}
\sup_{\mathbf{p} \in \mathcal{P}_n^{\sigma}} g_{n}^{\sigma}(\mathbf{p}) = g_{n}^{\sigma}(\overline{\mathbf{k}}_{\sigma_{n}}) = \overline{S}_{n}^{\boldsymbol{\mu},\sigma}-\overline{S}_{n-1}^{\boldsymbol{\mu},\sigma} \;\;\text{ and } \inf_{\mathbf{p} \in \mathcal{P}_n^{\sigma}} h_{n}^{\sigma}(\mathbf{p}) = h_{n}^{\sigma}(\underline{\mathbf{k}}_{\sigma_{n}}) = \underline{S}_{n}^{\boldsymbol{\mu},\sigma}-\underline{S}_{n-1}^{\boldsymbol{\mu},\sigma}.
\end{equation*}
\end{lemma}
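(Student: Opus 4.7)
\medskip

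The proof splits naturally into two independent arguments: the optimisation of $f_n^\sigma$ reduces to a Gibbs-type inequality, while those of $g_n^\sigma$ and $h_n^\sigma$ are ratios of linear functionals on the probability simplex. The plan is to handle the three claims separately but with parallel structure, starting with the ``entropy'' case and then passing to the ``degenerate'' case.

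\medskip

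\emph{Step 1: the supremum of $f_n^\sigma$.} The key observation is that the recursive definition~\eqref{eq:14} of $T_n^\sigma$ is precisely set up so that
\begin{equation*}
p_{\sigma_n}^{\ast}(i) \;=\; \exp\!\Big(\varphi_n^\sigma(i) + \sum_{\ell=1}^{n}(T_\ell^\sigma-T_{\ell-1}^\sigma)\log\lambda_i^{(\sigma_\ell)}\Big)
\end{equation*}
is a probability vector on $\mathcal{I}_n^\sigma$. Using this, I would show that the inequality $f_n^\sigma(\mathbf{p})\le T_n^\sigma-T_{n-1}^\sigma$ is equivalent, after multiplying by $\chi_n^\sigma(\mathbf{p})>0$ and regrouping the $\sum_{k=1}^{n-1}$ term with a new $k=n$ term, to
\begin{equation*}
-\sum_{i\in\mathcal{I}_n^\sigma} p(i)\log p(i) \;\le\; -\sum_{i\in\mathcal{I}_n^\sigma} p(i)\log p_{\sigma_n}^\ast(i),
\end{equation*}
which is the non-negativity of the Kullback--Leibler divergence (Gibbs' inequality), and is an equality if and only if $\mathbf{p}=\mathbf{p}_{\sigma_n}^\ast$. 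The value $f_n^\sigma(\mathbf{p}_{\sigma_n}^\ast)=T_n^\sigma-T_{n-1}^\sigma$ is then a direct substitution: one takes $-\log p_{\sigma_n}^\ast(i)$ from the display above, multiplies by $p_{\sigma_n}^\ast(i)$, sums in $i$, and reads off $H(\mathbf{p}_{\sigma_n}^\ast)=-\int\varphi_n^\sigma\,\mathrm{d}\mathbf{p}_{\sigma_n}^\ast+\sum_{\ell=1}^n(T_\ell^\sigma-T_{\ell-1}^\sigma)\chi_\ell^\sigma(\mathbf{p}_{\sigma_n}^\ast)$.

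\medskip

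\emph{Step 2: the extrema of $g_n^\sigma$ and $h_n^\sigma$.} Both of these functions are of the form
\begin{equation*}
\mathbf{p}\longmapsto \frac{\sum_{i\in\mathcal{I}_n^\sigma} p(i)\, a_i}{\sum_{i\in\mathcal{I}_n^\sigma} p(i)\, b_i},
\end{equation*}
where $b_i=-\log\lambda_i^{(\sigma_n)}>0$ and the $a_i$ collect the $\log\mu_n^\sigma(i)$ term together with a linear combination of the $\log\lambda_i^{(\sigma_k)}$ for $k<n$ with the (already-fixed) coefficients $\overline{S}_k^{\boldsymbol{\mu},\sigma}-\overline{S}_{k-1}^{\boldsymbol{\mu},\sigma}$ in the $g$ case, or $\underline{S}_k^{\boldsymbol{\mu},\sigma}-\underline{S}_{k-1}^{\boldsymbol{\mu},\sigma}$ in the $h$ case. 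For any such ratio of linear functionals with a strictly positive denominator on the simplex, a standard argument gives $\sup_\mathbf{p}=\max_i a_i/b_i$ and $\inf_\mathbf{p}=\min_i a_i/b_i$, each attained at the corresponding Dirac measure: the bound $\sum_i p(i)(a_i-Mb_i)\le 0$ holds for every $\mathbf{p}\in\mathcal{P}_n^\sigma$ exactly when $M\ge\max_i a_i/b_i$. Matching the resulting expressions with the recursive definitions~\eqref{eq:39} and~\eqref{eq:390} of $\overline{S}_n^{\boldsymbol{\mu},\sigma}$ and $\underline{S}_n^{\boldsymbol{\mu},\sigma}$ (and recalling the choice of the maximising symbol $\overline{k}_n^\sigma$, resp. minimising $\underline{k}_n^\sigma$) finishes the identification.

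\medskip

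\emph{Main obstacle.} Conceptually none of the three optimisations is deep: one is Gibbs' inequality, the other two are convexity on a simplex. The only real care is bookkeeping, namely verifying that the linear combination $\sum_{k=1}^{n-1}\chi_k^\sigma(\mathbf{p})(\overline{S}_k-\overline{S}_{k-1})$ inside the numerators of $g_n^\sigma$ and $h_n^\sigma$ is exactly what is needed to reproduce the recursion for $\overline{S}_n$ and $\underline{S}_n$ after dividing by $\chi_n^\sigma$ and maximising/minimising over $i$. I would carry this out by writing out the $n$-th recursive step and matching terms symbol by symbol.
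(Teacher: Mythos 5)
Your proposal is correct and takes essentially the same route as the paper: for $f_n^\sigma$ the paper also uses the non-negativity of the Kullback--Leibler divergence with respect to $\mathbf{p}_{\sigma_n}^\ast$ (written there as the identity $f_n^\sigma(\mathbf{p})=T_n^\sigma-T_{n-1}^\sigma-H(\mathbf{p}\,\|\,\mathbf{p}_{\sigma_n}^\ast)/\chi_n^\sigma(\mathbf{p})$, which is your Gibbs-inequality rearrangement), and for $g_n^\sigma,h_n^\sigma$ it likewise exploits that numerator and denominator are linear in $\mathbf{p}$ to reduce to degenerate (Dirac) vectors and then matches the recursions \eqref{eq:39} and \eqref{eq:390}. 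Your explicit $\max_i a_i/b_i$ bound for the linear-fractional functional is just a cleaner phrasing of the paper's coordinate-wise vertex reduction; no gap.
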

\begin{proof}
Let $\mathbf{p}$ and $\mathbf{q}$ be two probability vectors of the same length with strictly positive entries. The \emph{Kullback--Leibler divergence} (or \emph{relative entropy}) of $\mathbf{p}$ with respect to $\mathbf{q}$ is
\begin{equation*}
	H(\mathbf{p}\| \mathbf{q}) \coloneqq \sum_{i} p_{i} \log (p_{i} / q_{i}).
\end{equation*}
It is asymmetric and $H(\mathbf{p}\| \mathbf{q})\geq 0$ with equality if and only if $\mathbf{p}=\mathbf{q}$.
	
Let $\mathbf{p} \in \mathcal{P}_n^{\sigma}$. Then using~\eqref{eq:14},
\begin{align*}
	f_{n}^{\sigma}(\mathbf{p})
	&= \frac{1}{\chi_{n}^{\sigma}(\mathbf{p})} \bigg( -\sum_{i\in\mathcal{I}_{n}^{\sigma}}p(i)\log \Big( p_{\sigma_{n}}^{\ast}(i) \frac{p(i)}{p_{\sigma_{n}}^{\ast}(i)} \Big) + \int\! \varphi_n^{\sigma}\,\mathrm{d}\mathbf{p} - \sum_{k=1}^{n-1}\chi_k^{\sigma}(\mathbf{p})\big( T_k^{\sigma}-T_{k-1}^{\sigma} \big)\bigg) \\
	&= T_n^{\sigma}-T_{n-1}^{\sigma} - \frac{H(\mathbf{p}\| \mathbf{p}_{\sigma_{n}}^{\ast})}{\chi_{n}^{\sigma}(\mathbf{p})} = T_n^{\sigma}-T_{n-1}^{\sigma} \;\Longleftrightarrow\; \mathbf{p} = \mathbf{p}_{\sigma_{n}}^{\ast},
\end{align*}
otherwise $f_{n}^{\sigma}(\mathbf{p})<f_{n}^{\sigma}(\mathbf{p}_{\sigma_{n}}^{\ast})=T_n^{\sigma}-T_{n-1}^{\sigma}$. Since $\mathbf{p}_{\sigma_{n}}^{\ast}$ is uniformly bounded away from the boundary of $\mathcal{P}_n^{\sigma}$ and $f_{n}^{\sigma}(\mathbf{p})$ is continuous in $\mathbf{p}$, these imply that $\sup_{\mathbf{p} \in \mathcal{P}_n^{\sigma}} f_{n}^{\sigma}(\mathbf{p}) = f_{n}^{\sigma}(\mathbf{p}_{\sigma_{n}}^{\ast})$.

The extreme value for $g_n^{\sigma}$ and $h_n^{\sigma}$ is even simpler. For every $i\in\mathcal{I}_n^{\sigma}$, the numerator and the denominator in both $g_n^{\sigma}$ and $h_n^{\sigma}$ are linear in $p_{\sigma_n}(i)$. Therefore, $g_n^{\sigma}$ and $h_n^{\sigma}$ considered as one variable functions of $p_{\sigma_n}(i)$ take their extreme values when $p_{\sigma_n}(i)$ is equal to $0$ or $1$. So it is enough to consider the degenerate probability vectors putting all mass on one of the coordinates of $\mathcal{I}_n^{\sigma}$. Out of these vectors, by definition, $\overline{\mathbf{k}}_{\sigma_{n}}$ maximises $g_n^{\sigma}$ while $\underline{\mathbf{k}}_{\sigma_{n}}$ minimises $h_n^{\sigma}$. To conclude, observe from~\eqref{eq:39} and~\eqref{eq:390} that $g_{n}^{\sigma}(\overline{\mathbf{k}}_{\sigma_{n}}) = \overline{S}_{n}^{\boldsymbol{\mu},\sigma}-\overline{S}_{n-1}^{\boldsymbol{\mu},\sigma}$ and $h_{n}^{\sigma}(\underline{\mathbf{k}}_{\sigma_{n}}) = \underline{S}_{n}^{\boldsymbol{\mu},\sigma}-\underline{S}_{n-1}^{\boldsymbol{\mu},\sigma}$.
\end{proof}

\begin{lemma}\label{lem:tpsigma}
For every $1\leq n\leq d-1$, $t(\mathbf{p}_{\sigma_d};\ldots;\mathbf{p}_{\sigma_{n+1}};\mathbf{p}_{\sigma_n}^{\ast};\ldots;\mathbf{p}_{\sigma_1}^{\ast})$ is equal to
\begin{equation}\label{eq:37}
	T_n^{\sigma} +\sum_{k=n+1}^{d} C_{k}^{(d), \sigma}(\mathbf{P}_{\!\sigma}) \cdot\bigg(H(\mathbf{p}_{\sigma_{k}}) + \int\! \varphi_k^{\sigma}\,\mathrm{d}\mathbf{p}_{\sigma_k} - \sum_{\ell=1}^{n} \chi_{\ell}^{\sigma}(\mathbf{p}_{\sigma_k})\big( T_{\ell}^{\sigma}-T_{\ell-1}^{\sigma} \big) \bigg),
\end{equation}
moreover, $t(\mathbf{p}_{\sigma_d}^{\ast};\ldots;\mathbf{p}_{\sigma_1}^{\ast})=T_d^{\sigma}$.
\end{lemma}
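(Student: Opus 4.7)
The plan is to prove the displayed formula by induction on $n$ running from $1$ to $d-1$, and then derive the final identity $t(\mathbf{p}_{\sigma_d}^{\ast};\ldots;\mathbf{p}_{\sigma_1}^{\ast})=T_d^{\sigma}$ from the $n=d-1$ case by a one-line computation. The two ingredients driving the induction are (i) Lemma~\ref{lem:fnsigma}, rewritten as
\[
H(\mathbf{p}_{\sigma_n}^{\ast})+\int\!\varphi_n^\sigma\,\mathrm{d}\mathbf{p}_{\sigma_n}^{\ast}-\sum_{\ell=1}^{n-1}\chi_\ell^\sigma(\mathbf{p}_{\sigma_n}^{\ast})\bigl(T_\ell^\sigma-T_{\ell-1}^\sigma\bigr)=(T_n^\sigma-T_{n-1}^\sigma)\,\chi_n^\sigma(\mathbf{p}_{\sigma_n}^{\ast}),
\]
and (ii) the recursion~\eqref{eq:15} rewritten in the form
\[
C_n^{(d),\sigma}(\mathbf{P}_{\!\sigma})\cdot\chi_n^\sigma(\mathbf{p}_{\sigma_n})=1-\sum_{m=n+1}^{d}C_m^{(d),\sigma}(\mathbf{P}_{\!\sigma})\cdot\chi_n^\sigma(\mathbf{p}_{\sigma_m}).
\]
A quick check of \eqref{eq:15} also shows that $C_k^{(d),\sigma}$ for $k\geq n$ depends only on the coordinates $\mathbf{p}_{\sigma_n},\ldots,\mathbf{p}_{\sigma_d}$; this is what allows me to freely substitute $\mathbf{p}_{\sigma_\ell}^{\ast}$ for $\ell<n$ without altering these constants.

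For the base case $n=1$, I would expand the definition~\eqref{eq:36} of $t$ with $\mathbf{p}_{\sigma_1}$ replaced by $\mathbf{p}_{\sigma_1}^{\ast}$ and use (i) with $n=1$ (noting $T_0^\sigma=0$) to turn the $k=1$ summand into $T_1^\sigma\cdot C_1^{(d),\sigma}\cdot\chi_1^\sigma(\mathbf{p}_{\sigma_1}^{\ast})$; then (ii) replaces this product by $T_1^\sigma\bigl(1-\sum_{k\geq 2}C_k^{(d),\sigma}\,\chi_1^\sigma(\mathbf{p}_{\sigma_k})\bigr)$. Redistributing the $-T_1^\sigma\chi_1^\sigma(\mathbf{p}_{\sigma_k})$ contributions into the remaining $k\geq 2$ summands produces exactly \eqref{eq:37} at $n=1$.

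For the inductive step, assuming \eqref{eq:37} at level $n-1$, the plan is to substitute $\mathbf{p}_{\sigma_n}=\mathbf{p}_{\sigma_n}^{\ast}$ and extract the $k=n$ summand. By (i) it collapses to $C_n^{(d),\sigma}\cdot(T_n^\sigma-T_{n-1}^\sigma)\cdot\chi_n^\sigma(\mathbf{p}_{\sigma_n}^{\ast})$, and by (ii) this equals $(T_n^\sigma-T_{n-1}^\sigma)\bigl(1-\sum_{m\geq n+1}C_m^{(d),\sigma}\,\chi_n^\sigma(\mathbf{p}_{\sigma_m})\bigr)$. Adding the resulting $T_n^\sigma-T_{n-1}^\sigma$ to the leading $T_{n-1}^\sigma$ upgrades the constant to $T_n^\sigma$, while absorbing the remaining pieces $-C_m^{(d),\sigma}(T_n^\sigma-T_{n-1}^\sigma)\chi_n^\sigma(\mathbf{p}_{\sigma_m})$ into the $k\geq n+1$ summands extends each inner sum from $\sum_{\ell=1}^{n-1}$ to $\sum_{\ell=1}^{n}$, yielding \eqref{eq:37} at level $n$.

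For the last assertion I would apply the $n=d-1$ case, substitute $\mathbf{p}_{\sigma_d}=\mathbf{p}_{\sigma_d}^{\ast}$, and note that $C_d^{(d),\sigma}(\mathbf{P}_{\!\sigma}^{\ast})=1/\chi_d^\sigma(\mathbf{p}_{\sigma_d}^{\ast})$; the unique surviving $k=d$ summand is then exactly $f_d^\sigma(\mathbf{p}_{\sigma_d}^{\ast})=T_d^\sigma-T_{d-1}^\sigma$ by Lemma~\ref{lem:fnsigma}, so the total is $T_{d-1}^\sigma+(T_d^\sigma-T_{d-1}^\sigma)=T_d^\sigma$. The argument is entirely algebraic; the main obstacle is purely bookkeeping, namely keeping careful track of which $C_k^{(d),\sigma}$ depend on which coordinates as the starred components are substituted one at a time. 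Once one invokes the observation that \eqref{eq:15} makes $C_k^{(d),\sigma}$ for $k\geq n$ blind to the positions $1,\ldots,n-1$, the induction proceeds cleanly with no further hidden difficulty.
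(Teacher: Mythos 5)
Your proposal is correct and follows essentially the same route as the paper: induction on $n$ driven by Lemma~\ref{lem:fnsigma} evaluated at $\mathbf{p}_{\sigma_n}^{\ast}$, the recursion~\eqref{eq:15} to convert $C_n^{(d),\sigma}\chi_n^{\sigma}$ into $1-\sum_{m>n}C_m^{(d),\sigma}\chi_n^{\sigma}(\mathbf{p}_{\sigma_m})$, redistribution of the resulting terms into the $k\geq n+1$ summands, and the observation that $C_k^{(d),\sigma}$ for $k\geq n+1$ is unaffected by the coordinates being starred. The final step via $C_d^{(d),\sigma}=1/\chi_d^{\sigma}(\mathbf{p}_{\sigma_d})$ and $f_d^{\sigma}(\mathbf{p}_{\sigma_d}^{\ast})=T_d^{\sigma}-T_{d-1}^{\sigma}$ is also exactly the paper's argument.
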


\begin{proof}
The proof goes by induction. First for $n=1$, using the definition of $C_{1}^{(d), \sigma}(\mathbf{P}_{\!\sigma})$ from~\eqref{eq:15}, $t(\mathbf{p}_{\sigma_d};\ldots;\mathbf{p}_{\sigma_1})$ is equal to
\begin{equation*}
	\underbrace{ \sum_{m=2}^{d} 
		C_{m}^{(d), \sigma}(\mathbf{P}_{\!\sigma}) \cdot\bigg(\! H(\mathbf{p}_{\sigma_{m}}) + \int\! \varphi_m^{\sigma}\,\mathrm{d}\mathbf{p}_{\sigma_m}\!\bigg)}_{\text{independent of }\mathbf{p}_{\sigma_1}} + 
	\underbrace{ \bigg(\! 1-\!\sum_{m=2}^{d} C_m^{(d),\sigma}(\mathbf{P}_{\!\sigma})\cdot \chi_1^{\sigma}(\mathbf{p}_{\sigma_m})\! \bigg) }_{\text{independent of }\mathbf{p}_{\sigma_1}} 
	\underbrace{ \frac{H(\mathbf{p}_{\sigma_{1}}) + \int\! \varphi_1^{\sigma}\,\mathrm{d}\mathbf{p}_{\sigma_1}}{\chi_1^{\sigma}(\mathbf{p}_{\sigma_1})} }_{=f_1^{\sigma}(\mathbf{p}_{\sigma_1})}.
\end{equation*}
From Lemma~\ref{lem:fnsigma} we know that $f_1^{\sigma}(\mathbf{p}_{\sigma_1})\leq f_{1}^{\sigma}(\mathbf{p}_{\sigma_{1}}^{\ast}) = T_1^{\sigma}$ and so
\begin{equation*}
	t(\mathbf{p}_{\sigma_d};\ldots;\mathbf{p}_{\sigma_1})\leq t(\mathbf{p}_{\sigma_d};\ldots;\mathbf{p}_{\sigma_2};\mathbf{p}_{\sigma_1}^{\ast}) =
	T_1^{\sigma} +\sum_{k=2}^{d} C_{k}^{(d), \sigma}(\mathbf{P}_{\!\sigma}) \cdot\bigg(\!H(\mathbf{p}_{\sigma_{k}}) + \int\! \varphi_k^{\sigma}\,\mathrm{d}\mathbf{p}_{\sigma_k} - \chi_{1}^{\sigma}(\mathbf{p}_{\sigma_k})T_1^{\sigma} \!\bigg),
\end{equation*}
proving the assertion for $n=1$ (recalling that $T_0^{\sigma}=0$).
	
Assume that~\eqref{eq:37} holds for $n-1$. Then using the definition of $C_{n}^{(d), \sigma}(\mathbf{P}_{\!\sigma})$ from~\eqref{eq:15} and the induction hypothesis, $t(\mathbf{p}_{\sigma_d};\ldots;\mathbf{p}_{\sigma_{n}};\mathbf{p}_{\sigma_{n-1}}^{\ast};\ldots;\mathbf{p}_{\sigma_1}^{\ast})$ is equal to 
\begin{multline*}
	\underbrace{ T_{n-1}^{\sigma} +\sum_{k=n+1}^{d} C_{k}^{(d), \sigma}(\mathbf{P}_{\!\sigma}) \cdot\bigg(H(\mathbf{p}_{\sigma_{k}}) + \int\! \varphi_k^{\sigma}\,\mathrm{d}\mathbf{p}_{\sigma_k} - \sum_{\ell=1}^{n-1} \chi_{\ell}^{\sigma}(\mathbf{p}_{\sigma_k})\big( T_{\ell}^{\sigma}-T_{\ell-1}^{\sigma} \big) \bigg) }_{\text{independent of }\mathbf{p}_{\sigma_n}} \\
	+
	\underbrace{ \bigg(\! 1-\!\sum_{m=n+1}^{d} C_m^{(d),\sigma}(\mathbf{P}_{\!\sigma})\cdot \chi_n^{\sigma}(\mathbf{p}_{\sigma_m})\! \bigg) }_{\text{independent of }\mathbf{p}_{\sigma_n}}
	\underbrace{ \frac{ H(\mathbf{p}_{\sigma_{n}}) + \int\! \varphi_n^{\sigma}\,\mathrm{d}\mathbf{p}_{\sigma_n} - \sum_{\ell=1}^{n-1}\chi_{\ell}^{\sigma}(\mathbf{p}_{\sigma_n})\big( T_{\ell}^{\sigma}-T_{\ell-1}^{\sigma} \big) }{\chi_n^{\sigma}(\mathbf{p}_{\sigma_n})} }_{=f_{n}^{\sigma}(\mathbf{p}_{\sigma_{n}})\leq f_{n}^{\sigma}(\mathbf{p}_{\sigma_{n}}^{\ast}) = T_n^{\sigma}-T_{n-1}^{\sigma} \;\text{ by Lemma~\ref{lem:fnsigma}}}.
\end{multline*}
Hence,
\begin{multline*}
	t(\mathbf{p}_{\sigma_d};\ldots;\mathbf{p}_{\sigma_{n}};\mathbf{p}_{\sigma_{n-1}}^{\ast};\ldots;\mathbf{p}_{\sigma_1}^{\ast})
	\leq
	t(\mathbf{p}_{\sigma_d};\ldots;\mathbf{p}_{\sigma_{n+1}};\mathbf{p}_{\sigma_{n}}^{\ast};\ldots;\mathbf{p}_{\sigma_1}^{\ast})
	= \\
	T_{n-1}^{\sigma} +T_n^{\sigma}-T_{n-1}^{\sigma} +\sum_{k=n+1}^{d} C_{k}^{(d), \sigma}(\mathbf{P}_{\!\sigma}) \cdot\bigg(H(\mathbf{p}_{\sigma_{k}}) + \int\! \varphi_k^{\sigma}\,\mathrm{d}\mathbf{p}_{\sigma_k} - \sum_{\ell=1}^{n} \chi_{\ell}^{\sigma}(\mathbf{p}_{\sigma_k})\big( T_{\ell}^{\sigma}-T_{\ell-1}^{\sigma} \big) \bigg),
\end{multline*} 
proving the assertion for $n\leq d-1$.
	
Finally, for $n=d$, we use that $C_{d}^{(d), \sigma}(\mathbf{P}_{\!\sigma}) = 1/\chi_d^{\sigma}(\mathbf{p}_{\sigma_d})$ to obtain
\begin{equation*}
	t(\mathbf{p}_{\sigma_d};\mathbf{p}_{\sigma_{d-1}}^{\ast};\ldots;\mathbf{p}_{\sigma_1}^{\ast}) = T_{d-1}^{\sigma} +
	\underbrace{ \frac{ H(\mathbf{p}_{\sigma_{d}}) + \int\! \varphi_d^{\sigma}\,\mathrm{d}\mathbf{p}_{\sigma_d} - \sum_{\ell=1}^{d-1}\chi_{\ell}^{\sigma}(\mathbf{p}_{\sigma_d})\big( T_{\ell}^{\sigma}-T_{\ell-1}^{\sigma} \big) }{\chi_d^{\sigma}(\mathbf{p}_{\sigma_d})} }_{=f_{d}^{\sigma}(\mathbf{p}_{\sigma_{d}})\leq f_{d}^{\sigma}(\mathbf{p}_{\sigma_{d}}^{\ast}) = T_d^{\sigma}-T_{d-1}^{\sigma} \;\text{ by Lemma~\ref{lem:fnsigma}}}.
\end{equation*}
To conclude, $t(\mathbf{p}_{\sigma_d};\mathbf{p}_{\sigma_{d-1}}^{\ast};\ldots;\mathbf{p}_{\sigma_1}^{\ast})\leq t(\mathbf{P}_{\!\sigma}^{\ast})=T_d^{\sigma}$.
\end{proof}

\begin{proof}[Proof of Proposition~\ref{prop:main}]
In the process of proving Lemma~\ref{lem:tpsigma}, we actually showed that for any $\sigma\in\mathcal{A}$ and $\mathbf{P}_{\!\sigma} \in \mathcal{P}^{\sigma}$,
\begin{equation*}
	t(\mathbf{P}_{\!\sigma}) \leq t(\mathbf{p}_{\sigma_d};\ldots;\mathbf{p}_{\sigma_2};\mathbf{p}_{\sigma_1}^{\ast}) \leq \ldots  \leq t(\mathbf{p}_{\sigma_d};\mathbf{p}_{\sigma_{d-1}}^{\ast};\ldots;\mathbf{p}_{\sigma_1}^{\ast}) \leq t(\mathbf{P}_{\!\sigma}^{\ast})=T_d^{\sigma},
\end{equation*}
with equality throughout if and only if $\mathbf{P}_{\!\sigma}=\mathbf{P}_{\!\sigma}^{\ast}$. Since $\mathbf{p}_{\sigma_{n}}^{\ast}$ is uniformly bounded away from the boundary of $\mathcal{P}_n^{\sigma}$ for every $1\leq n\leq d$ and $t(\mathbf{P}_{\!\sigma})$ is continuous in $\mathbf{P}_{\!\sigma}$, these imply that $\sup _{\mathbf{P}_{\!\sigma} \in \mathcal{P}^{\sigma}}\, t(\mathbf{P}_{\!\sigma}) = t(\mathbf{P}_{\!\sigma}^{\ast})=T_d^{\sigma}$.
\end{proof}

\begin{lemma}\label{lem:Spsigma}
For every $1\leq n\leq d-1$, $S(\mathbf{p}_{\sigma_d};\ldots;\mathbf{p}_{\sigma_{n+1}};\overline{\mathbf{k}}_{\sigma_n};\ldots;\overline{\mathbf{k}}_{\sigma_1})$ is equal to
\begin{equation}\label{eq:42}
	\overline{S}_n^{\boldsymbol{\mu},\sigma} -\sum_{k=n+1}^{d} C_{k}^{(d), \sigma}(\mathbf{P}_{\!\sigma}) \cdot\bigg( \int\! \log \boldsymbol{\mu}_k^{\sigma}\,\mathrm{d}\mathbf{p}_{\sigma_k} + \sum_{\ell=1}^{n} \chi_{\ell}^{\sigma}(\mathbf{p}_{\sigma_k})\big( \overline{S}_{\ell}^{\boldsymbol{\mu},\sigma}-\overline{S}_{\ell-1}^{\boldsymbol{\mu},\sigma} \big) \bigg),
\end{equation}
moreover, $S(\overline{\mathbf{k}}_{\sigma_d};\ldots;\overline{\mathbf{k}}_{\sigma_1})=\overline{S}_d^{\boldsymbol{\mu},\sigma}$. Similarly, $S(\mathbf{p}_{\sigma_d};\ldots;\mathbf{p}_{\sigma_{n+1}};\underline{\mathbf{k}}_{\sigma_n};\ldots;\underline{\mathbf{k}}_{\sigma_1})$ is equal to
\begin{equation*}
	\underline{S}_n^{\boldsymbol{\mu},\sigma} -\sum_{k=n+1}^{d} C_{k}^{(d), \sigma}(\mathbf{P}_{\!\sigma}) \cdot\bigg( \int\! \log \boldsymbol{\mu}_k^{\sigma}\,\mathrm{d}\mathbf{p}_{\sigma_k} + \sum_{\ell=1}^{n} \chi_{\ell}^{\sigma}(\mathbf{p}_{\sigma_k})\big( \underline{S}_{\ell}^{\boldsymbol{\mu},\sigma}-\underline{S}_{\ell-1}^{\boldsymbol{\mu},\sigma} \big) \bigg),
\end{equation*}
moreover, $S(\underline{\mathbf{k}}_{\sigma_d};\ldots;\underline{\mathbf{k}}_{\sigma_1})=\underline{S}_d^{\boldsymbol{\mu},\sigma}$.
\end{lemma}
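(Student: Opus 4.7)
\emph{Proof proposal.} The proof will mirror the induction used for Lemma~\ref{lem:tpsigma}. The functional $S$ in~\eqref{eq:360} is obtained from $t$ in~\eqref{eq:36} by deleting the entropy contributions $H(\mathbf{p}_{\sigma_n})$ and replacing every $\int\!\varphi_n^{\sigma}\,\mathrm{d}\mathbf{p}_{\sigma_n}$ by $-\int\!\log\boldsymbol{\mu}_n^{\sigma}\,\mathrm{d}\mathbf{p}_{\sigma_n}$. Under this substitution $g_n^{\sigma}$ plays the role played by $f_n^{\sigma}$, the quantities $\overline{S}_n^{\boldsymbol{\mu},\sigma}$ and $\underline{S}_n^{\boldsymbol{\mu},\sigma}$ take over from $T_n^{\sigma}$, and the degenerate vectors $\overline{\mathbf{k}}_{\sigma_n}$ (respectively $\underline{\mathbf{k}}_{\sigma_n}$) replace $\mathbf{p}_{\sigma_n}^{\ast}$. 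Since~\eqref{eq:42} is an equality rather than an inequality, no sign tracking on $C_n^{(d),\sigma}(\mathbf{P}_{\!\sigma})$ is required at this stage.

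For the base case $n=1$, expand $C_1^{(d),\sigma}(\mathbf{P}_{\!\sigma})$ via~\eqref{eq:15} and substitute $\mathbf{p}_{\sigma_1}=\overline{\mathbf{k}}_{\sigma_1}$. The only $\mathbf{p}_{\sigma_1}$-dependent summand in $S$ rewrites as $(1-\sum_{m=2}^{d}C_m^{(d),\sigma}(\mathbf{P}_{\!\sigma})\chi_1^{\sigma}(\mathbf{p}_{\sigma_m}))\cdot g_1^{\sigma}(\overline{\mathbf{k}}_{\sigma_1})$, and Lemma~\ref{lem:fnsigma} supplies $g_1^{\sigma}(\overline{\mathbf{k}}_{\sigma_1})=\overline{S}_1^{\boldsymbol{\mu},\sigma}-\overline{S}_0^{\boldsymbol{\mu},\sigma}=\overline{S}_1^{\boldsymbol{\mu},\sigma}$. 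Distributing this constant and collecting the resulting $\chi_1^{\sigma}$-terms inside each $C_m^{(d),\sigma}$-summand reproduces~\eqref{eq:42} at $n=1$.

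For the induction step from $n-1$ to $n\leq d-1$, apply the induction hypothesis to $S(\mathbf{p}_{\sigma_d};\ldots;\mathbf{p}_{\sigma_n};\overline{\mathbf{k}}_{\sigma_{n-1}};\ldots;\overline{\mathbf{k}}_{\sigma_1})$; the only $\mathbf{p}_{\sigma_n}$-dependent piece rewrites as $(1-\sum_{m=n+1}^{d}C_m^{(d),\sigma}(\mathbf{P}_{\!\sigma})\chi_n^{\sigma}(\mathbf{p}_{\sigma_m}))\cdot g_n^{\sigma}(\mathbf{p}_{\sigma_n})$ by the same refactoring as in Lemma~\ref{lem:tpsigma}. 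Setting $\mathbf{p}_{\sigma_n}=\overline{\mathbf{k}}_{\sigma_n}$, Lemma~\ref{lem:fnsigma} gives $g_n^{\sigma}(\overline{\mathbf{k}}_{\sigma_n})=\overline{S}_n^{\boldsymbol{\mu},\sigma}-\overline{S}_{n-1}^{\boldsymbol{\mu},\sigma}$; the telescoping $\overline{S}_{n-1}^{\boldsymbol{\mu},\sigma}+(\overline{S}_n^{\boldsymbol{\mu},\sigma}-\overline{S}_{n-1}^{\boldsymbol{\mu},\sigma})=\overline{S}_n^{\boldsymbol{\mu},\sigma}$ updates the leading constant, and the freshly produced $\chi_n^{\sigma}(\mathbf{p}_{\sigma_k})\cdot(\overline{S}_n^{\boldsymbol{\mu},\sigma}-\overline{S}_{n-1}^{\boldsymbol{\mu},\sigma})$ terms extend the inner sum in~\eqref{eq:42} from $\ell\leq n-1$ to $\ell\leq n$. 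For the final identity, apply~\eqref{eq:42} at $n=d-1$ and substitute $\mathbf{p}_{\sigma_d}=\overline{\mathbf{k}}_{\sigma_d}$; because $C_d^{(d),\sigma}(\mathbf{P}_{\!\sigma})=1/\chi_d^{\sigma}(\mathbf{p}_{\sigma_d})$, the remaining $k=d$ summand collapses to $g_d^{\sigma}(\overline{\mathbf{k}}_{\sigma_d})=\overline{S}_d^{\boldsymbol{\mu},\sigma}-\overline{S}_{d-1}^{\boldsymbol{\mu},\sigma}$, giving $\overline{S}_d^{\boldsymbol{\mu},\sigma}$ overall.

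The lower counterpart is proved verbatim by replacing $g_n^{\sigma}$, $\overline{\mathbf{k}}_{\sigma_n}$, and $\overline{S}_n^{\boldsymbol{\mu},\sigma}$ throughout with $h_n^{\sigma}$, $\underline{\mathbf{k}}_{\sigma_n}$, and $\underline{S}_n^{\boldsymbol{\mu},\sigma}$. No serious obstacle is expected: the absence of the Shannon entropy in $S$ simplifies the algebra relative to Lemma~\ref{lem:tpsigma}, and the required extremal values of $g_n^{\sigma}$ and $h_n^{\sigma}$ at degenerate vectors are already supplied by Lemma~\ref{lem:fnsigma}. The main bookkeeping is verifying that the $\chi_n^{\sigma}(\mathbf{p}_{\sigma_k})$-terms produced at step $n$ assemble correctly into the inner sum of~\eqref{eq:42}; this check is straightforward since it uses only the definition of $g_n^{\sigma}$ (respectively $h_n^{\sigma}$) together with distributivity.
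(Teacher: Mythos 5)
Your proposal is correct and follows essentially the same route as the paper: the paper's own proof is exactly this induction parallel to Lemma~\ref{lem:tpsigma}, refactoring the $k=n$ summand via~\eqref{eq:15} into a $\mathbf{p}_{\sigma_n}$-independent prefactor times $g_n^{\sigma}$ (respectively $h_n^{\sigma}$) and invoking Lemma~\ref{lem:fnsigma} at the degenerate vectors, with the $n=d$ step collapsing via $C_d^{(d),\sigma}=1/\chi_d^{\sigma}(\mathbf{p}_{\sigma_d})$. The only cosmetic difference is that the paper additionally records the inequalities $g_n^{\sigma}(\mathbf{p}_{\sigma_n})\leq g_n^{\sigma}(\overline{\mathbf{k}}_{\sigma_n})$ and $h_n^{\sigma}(\mathbf{p}_{\sigma_n})\geq h_n^{\sigma}(\underline{\mathbf{k}}_{\sigma_n})$ along the way for later use in Proposition~\ref{prop:maxminexponent}, whereas you correctly note these are not needed for the equality stated in the lemma itself.
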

\begin{proof}
We just sketch the proof since it is a very similar induction argument to the one in the proof of Lemma~\ref{lem:tpsigma}. First, $S(\mathbf{P}_{\!\sigma})$ is equal to
\begin{equation*}
\underbrace{ -\sum_{m=2}^{d} C_{m}^{(d), \sigma}(\mathbf{P}_{\!\sigma}) \cdot \int\! \log \boldsymbol{\mu}_m^{\sigma}\,\mathrm{d}\mathbf{p}_{\sigma_m}}_{\text{independent of }\mathbf{p}_{\sigma_1}} + 
	\underbrace{ \bigg(\! 1-\!\sum_{m=2}^{d} C_m^{(d),\sigma}(\mathbf{P}_{\!\sigma})\cdot \chi_1^{\sigma}(\mathbf{p}_{\sigma_m})\! \bigg) }_{\text{independent of }\mathbf{p}_{\sigma_1}} 
	\underbrace{ \frac{- \int\! \log  \boldsymbol{\mu}_1^{\sigma}\,\mathrm{d}\mathbf{p}_{\sigma_1}}{\chi_1^{\sigma}(\mathbf{p}_{\sigma_1})} }_{(\ast)},
\end{equation*}
where $(\ast)=g_1^{\sigma}(\mathbf{p}_{\sigma_1})\leq \overline{S}_1^{\boldsymbol{\mu},\sigma}$ by Lemma~\ref{lem:fnsigma}. After rearranging, we obtain~\eqref{eq:42} for $n=1$.

Now assume that~\eqref{eq:42} holds for $n-1$. Then $S(\mathbf{p}_{\sigma_d};\ldots;\mathbf{p}_{\sigma_{n}};\overline{\mathbf{k}}_{\sigma_{n-1}};\ldots;\overline{\mathbf{k}}_{\sigma_1})$ equals
\begin{multline*}
\underbrace{ \overline{S}_{n-1}^{\boldsymbol{\mu},\sigma} -\sum_{k=n+1}^{d} C_{k}^{(d), \sigma}(\mathbf{P}_{\!\sigma}) \cdot\bigg( \int\! \log \boldsymbol{\mu}_k^{\sigma}\,\mathrm{d}\mathbf{p}_{\sigma_k} + \sum_{\ell=1}^{n-1} \chi_{\ell}^{\sigma}(\mathbf{p}_{\sigma_k})\big( \overline{S}_{\ell}^{\boldsymbol{\mu},\sigma}-\overline{S}_{\ell-1}^{\boldsymbol{\mu},\sigma} \big) \bigg) }_{\text{independent of }\mathbf{p}_{\sigma_n}} \\
	+
\underbrace{ \bigg(\! 1-\!\sum_{m=n+1}^{d} C_m^{(d),\sigma}(\mathbf{P}_{\!\sigma})\cdot \chi_n^{\sigma}(\mathbf{p}_{\sigma_m})\! \bigg) }_{\text{independent of }\mathbf{p}_{\sigma_n}}
\underbrace{ \frac{ - \int\! \log \boldsymbol{\mu}_n^{\sigma}\,\mathrm{d}\mathbf{p}_{\sigma_n} - \sum_{\ell=1}^{n-1}\chi_{\ell}^{\sigma}(\mathbf{p}_{\sigma_n})\big(\overline{S}_{\ell}^{\boldsymbol{\mu},\sigma}-\overline{S}_{\ell-1}^{\boldsymbol{\mu},\sigma} \big) }{\chi_n^{\sigma}(\mathbf{p}_{\sigma_n})} }_{=g_{n}^{\sigma}(\mathbf{p}_{\sigma_{n}})\leq g_{n}^{\sigma}(\overline{\mathbf{k}}_{\sigma_{n}}) = \overline{S}_{n}^{\boldsymbol{\mu},\sigma}-\overline{S}_{n-1}^{\boldsymbol{\mu},\sigma} \;\text{ by Lemma~\ref{lem:fnsigma}}}.
\end{multline*}
After rearranging, we again see that~\eqref{eq:42} holds for $n\leq d-1$.

Finally, for $n=d$, we use that $C_{d}^{(d), \sigma}(\mathbf{P}_{\!\sigma}) = 1/\chi_d^{\sigma}(\mathbf{p}_{\sigma_d})$ to obtain
\begin{equation*}
S(\mathbf{p}_{\sigma_d};\overline{\mathbf{k}}_{\sigma_{d-1}};\ldots;\overline{\mathbf{k}}_{\sigma_1}) = \overline{S}_{d-1}^{\boldsymbol{\mu},\sigma} +
	\underbrace{ \frac{ - \int\! \log \boldsymbol{\mu}_d^{\sigma}\,\mathrm{d}\mathbf{p}_{\sigma_d} - \sum_{\ell=1}^{d-1}\chi_{\ell}^{\sigma}(\mathbf{p}_{\sigma_d})\big( \overline{S}_{\ell}^{\boldsymbol{\mu},\sigma}-\overline{S}_{\ell-1}^{\boldsymbol{\mu},\sigma} \big) }{\chi_d^{\sigma}(\mathbf{p}_{\sigma_d})} }_{=g_{d}^{\sigma}(\mathbf{p}_{\sigma_{d}})\leq g_{d}^{\sigma}(\overline{\mathbf{k}}_{\sigma_{d}}) = \overline{S}_{d}^{\boldsymbol{\mu},\sigma}-\overline{S}_{d-1}^{\boldsymbol{\mu},\sigma} \;\text{ by Lemma~\ref{lem:fnsigma}}}.
\end{equation*}
To conclude, $S(\mathbf{p}_{\sigma_d};\overline{\mathbf{k}}_{\sigma_{d-1}};\ldots;\overline{\mathbf{k}}_{\sigma_1}) \leq S(\overline{\mathbf{K}}_{\sigma})=\overline{S}_d^{\boldsymbol{\mu},\sigma}$.

The proof for $S(\mathbf{p}_{\sigma_d};\ldots;\mathbf{p}_{\sigma_{n+1}};\underline{\mathbf{k}}_{\sigma_n};\ldots;\underline{\mathbf{k}}_{\sigma_1})$ is exactly the same except that $h_{n}^{\sigma}(\mathbf{p}_{\sigma_{n}})\geq h_{n}^{\sigma}(\underline{\mathbf{k}}_{\sigma_{n}}) = \underline{S}_{n}^{\boldsymbol{\mu},\sigma}-\underline{S}_{n-1}^{\boldsymbol{\mu},\sigma}$ is used instead of $g_n^{\sigma}$.
\end{proof}

\begin{proof}[Proof of Proposition~\ref{prop:maxminexponent}]
In the proof of Lemma~\ref{lem:Spsigma} we actually showed that 
\begin{equation*}
S(\mathbf{P}_{\!\sigma}) \leq S(\mathbf{p}_{\sigma_d};\ldots;\mathbf{p}_{\sigma_2};\overline{\mathbf{k}}_{\sigma_1}) \leq \ldots  \leq S(\mathbf{p}_{\sigma_d};\overline{\mathbf{k}}_{\sigma_{d-1}};\ldots;\overline{\mathbf{k}}_{\sigma_1}) \leq S(\overline{\mathbf{K}}_{\sigma})=\overline{S}_d^{\boldsymbol{\mu},\sigma}
\end{equation*}
and
\begin{equation*}
	S(\mathbf{P}_{\!\sigma}) \geq S(\mathbf{p}_{\sigma_d};\ldots;\mathbf{p}_{\sigma_2};\underline{\mathbf{k}}_{\sigma_1}) \geq \ldots  \geq S(\mathbf{p}_{\sigma_d};\underline{\mathbf{k}}_{\sigma_{d-1}};\ldots;\underline{\mathbf{k}}_{\sigma_1}) \geq S(\underline{\mathbf{K}}_{\sigma})=\underline{S}_d^{\boldsymbol{\mu},\sigma}.
\end{equation*}
\end{proof}

\section{Proof of Theorem~\ref{thm:main1} and Corollary~\ref{cor:main}}\label{sec:04}

\subsection{Preliminaries}\label{subsec:40}

Fix $\delta>0$ and consider any $\sigma\in\mathcal{A}_{\delta}$. Recall the symbolic representation~\eqref{eq:11} of a $\delta$-approximate cube $B_{\delta}(\ii)\in\mathcal{B}_{\delta}^{\sigma}$ is determined by the first $L_{\delta}(\ii,\sigma_1)$ symbols of $\ii$. We introduce the \emph{type of $\ii\in\Sigma_{\delta}^{\sigma}$ at scale $\delta$} (and also of $B_{\delta}(\ii)$) to be the $\#\mathcal{I}_d^{\sigma}+\#\mathcal{I}_{d-1}^{\sigma}+\ldots+\#\mathcal{I}_1^{\sigma}$ dimensional empirical vector
\begin{equation*}
\tau_{\delta}^{\sigma}(\ii) \coloneqq \big( \tau_{\delta}(\ii,\sigma_d)\,;\, \tau_{\delta}(\ii,\sigma_{d-1})\,;\,\ldots \,;\, \tau_{\delta}(\ii,\sigma_1) \big),
\end{equation*}
where for $1\leq n\leq d$ using the abbreviation $|\ii(\delta,n)|\coloneqq L_{\delta}(\ii,\sigma_n)-L_{\delta}(\ii,\sigma_{n+1})$,
\begin{equation*}
\tau_{\delta}(\ii,\sigma_n) \coloneqq \frac{1}{|\ii(\delta,n)|} \Big( \#\big\{L_{\delta}(\ii,\sigma_{n+1})+1\leq\ell\leq L_{\delta}(\ii,\sigma_n):\, \Pi_{n}^{\sigma}i_{\ell}=j\big\}  \Big)_{j\in\mathcal{I}_n^{\sigma}} .
\end{equation*}
Note that $\tau_{\delta}(\ii,\sigma_n)$ is an $\#\mathcal{I}_n^{\sigma}$ dimensional probability vector except when $L_{\delta}(\ii,\sigma_n)=L_{\delta}(\ii,\sigma_{n+1})$, then we set $\tau_{\delta}(\ii,\sigma_n)=(0,\ldots,0)$. The set of all possible $\sigma$-ordered types at scale $\delta$ is
\begin{equation*}
\mathcal{T}_\delta^{\sigma}\coloneqq\big\{\mathbf{P}=(\mathbf{p}_{\sigma_d};\mathbf{p}_{\sigma_{d-1}};\ldots; \mathbf{p}_{\sigma_1}): \text{ there exists } B_\delta(\ii)\in\mathcal{B}_\delta^{\sigma} \text{ such that } \mathbf{P}=\tau_\delta^{\sigma}(\ii)\big\}\subset \mathcal{P}^{\sigma},
\end{equation*}
and the \emph{type class of} $\mathbf{P}\in \mathcal{T}_\delta^{\sigma}$ is the set
\begin{equation*}
T_\delta^{\sigma}(\mathbf{P}) \coloneqq \big\{B_\delta(\ii)\in\mathcal{B}_\delta^{\sigma}:\, \tau_\delta^{\sigma}(\ii)=\mathbf{P}\big\}.
\end{equation*}
\begin{lemma}\label{lem:1}
Fix $\delta>0$ and $\sigma\in\mathcal{A}_{\delta}$. Then
\begin{equation}\label{eq:32}
\# \mathcal{T}_\delta^{\sigma} \leq \prod_{n=1}^d \Big( \max_{B_\delta(\ii)\in\mathcal{B}_\delta^{\sigma}} |\ii(\delta,n)|+1 \Big)^{\#\mathcal{I}_n^{\sigma}+1}.
\end{equation}
Moreover, for every $\mathbf{P}\in\mathcal{T}_{\delta}^{\sigma}$ and $\ii\in\Sigma_{\delta}^{\sigma}$ such that $\tau_\delta^{\sigma}(\ii)=\mathbf{P}$,
\begin{equation}\label{eq:33}
\exp\!\left[ \sum_{n=1}^d |\ii(\delta,n)| H(\mathbf{p}_{\sigma_n})\right]  \prod_{n=1}^d \big(|\ii(\delta,n)|+1\big)^{-\#\mathcal{I}_n^{\sigma}}  
\leq \#T_\delta^{\sigma}(\mathbf{P})  \leq \exp\!\left[ \sum_{n=1}^d |\ii(\delta,n)| H(\mathbf{p}_{\sigma_n})\right].
\end{equation}
\end{lemma}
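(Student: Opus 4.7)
\emph{Proof plan.} The approach is a direct application of the classical method of types from information theory. The key observation is that, via the symbolic representation~\eqref{eq:11}, counting approximate cubes of a prescribed type reduces to $d$ independent multinomial counting problems, one per block.

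For the upper bound~\eqref{eq:32}, I would enumerate types as follows. A type $\mathbf{P}=(\mathbf{p}_{\sigma_d};\ldots;\mathbf{p}_{\sigma_1})$ is specified by its $d$ empirical distributions. Each $\mathbf{p}_{\sigma_n}$ arises as the frequency vector on the alphabet $\mathcal{I}_n^\sigma$ coming from a block of length $L\in\{0,1,\ldots,L_{\max,n}\}$, where $L_{\max,n}:=\max_{B_\delta(\ii)\in\mathcal{B}_\delta^\sigma}|\ii(\delta,n)|$. For fixed $L$ there are at most $(L+1)^{\#\mathcal{I}_n^\sigma}$ such empirical distributions, since each coordinate is forced into $\{0/L,1/L,\ldots,L/L\}$. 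Summing over the at most $L_{\max,n}+1$ admissible values of $L$ gives at most $(L_{\max,n}+1)^{\#\mathcal{I}_n^\sigma+1}$ candidates for $\mathbf{p}_{\sigma_n}$; taking the product over $n$ yields~\eqref{eq:32}.

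For~\eqref{eq:33}, the combinatorial input is the standard multinomial bound
\[
(L+1)^{-k}\, e^{L H(\mathbf{p})} \;\leq\; \binom{L}{Lp_1,\ldots,Lp_k} \;\leq\; e^{L H(\mathbf{p})},
\]
valid for any empirical distribution $\mathbf{p}=(p_1,\ldots,p_k)$ from a sample of size $L$. Given a type $\mathbf{P}$ and a representative $\ii\in\Sigma_\delta^\sigma$ with $\tau_\delta^\sigma(\ii)=\mathbf{P}$, any cube $B_\delta(\jj)\in T_\delta^\sigma(\mathbf{P})$ is determined by independently choosing, for each $n$, a word in $\mathcal{I}_n^\sigma$ of length $|\ii(\delta,n)|$ whose empirical distribution equals $\mathbf{p}_{\sigma_n}$. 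This gives a total count of $\prod_{n=1}^d \binom{|\ii(\delta,n)|}{|\ii(\delta,n)|\mathbf{p}_{\sigma_n}}$, and the multinomial bounds above immediately yield both sides of~\eqref{eq:33}.

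The main obstacle lies in the previous step: to make the independent-block counting argument rigorous, one must check that every $\jj\in\Sigma_\delta^\sigma$ with $\tau_\delta^\sigma(\jj)=\mathbf{P}$ has the same block lengths $|\jj(\delta,n)|=|\ii(\delta,n)|$, at least up to a $\delta$-independent constant. This should follow from the stopping condition~\eqref{eq:10}: taking logarithms and using that $\lambda_j^{(\sigma_k)}=\lambda_{\Pi_m^\sigma j}^{(\sigma_k)}$ whenever $k\leq m$, one obtains, for every $1\leq n\leq d$,
\[
\sum_{m=n}^d |\jj(\delta,m)|\cdot \chi_n^\sigma(\mathbf{p}_{\sigma_m}) \;\in\; \bigl[-\log\delta,\;-\log\delta+C\bigr],
\]
with $C>0$ depending only on the IFS. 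This triangular linear system of $d$ inequalities pins down the block lengths uniquely up to an additive $O(1)$ error, so any residual ambiguity contributes only a polynomial factor in $|\ii(\delta,n)|$ that is absorbed by the prefactor on the left of~\eqref{eq:33} (and in any case vanishes after dividing by $-\log\delta$ in the definition of $P(\boldsymbol{\varphi})$).
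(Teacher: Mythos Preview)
Your proposal is correct and follows essentially the same approach as the paper's proof: both arguments obtain~\eqref{eq:32} by enumerating admissible empirical vectors block-by-block, and both invoke the standard multinomial estimate (the paper's~\eqref{eq:02}, citing Dembo--Zeitouni) for~\eqref{eq:33}. Your caution about block-length uniqueness is more care than the paper itself takes, and your $O(1)$ resolution via the triangular system of stopping inequalities is perfectly adequate for the downstream application in Section~\ref{subsec:31}.
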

\begin{proof}
For each $\mathbf{P}\in\mathcal{T}_{\delta}^{\sigma}$, $\mathbf{p}_{\sigma_n}$ is an $\#\mathcal{I}_n^{\sigma}$ dimensional vector with components belonging to the set $\{k/|\ii(\delta,n)|:\, 0\leq k\leq |\ii(\delta,n)|\}$. Moreover, $0\leq |\ii(\delta,n)|\leq \max_{B_\delta(\ii)\in\mathcal{B}_\delta^{\sigma}} |\ii(\delta,n)|$. Hence, a crude upper bound for the number of different $\mathbf{p}_{\sigma_n}$ is $( \max_{B_\delta(\ii)\in\mathcal{B}_\delta^{\sigma}} |\ii(\delta,n)|+1 )^{\#\mathcal{I}_n^{\sigma}+1}$. Multiplying for each coordinate $1\leq n\leq d$ gives the claim for $\# \mathcal{T}_\delta^{\sigma}$.
 
Let $\mathcal{I}$ be an arbitrary finite index set. It is well known from the method of types, see~\cite[Lemma 2.1.8]{DemboZeitouniLDP}, that
\begin{equation}\label{eq:02}
(n+1)^{-\#\mathcal{I}} e^{n H(\mathbf{p})} \leq \#\{(i_1,\ldots,i_n)\in\mathcal{I}^n: \text{ the type } \tau(i_1,\ldots,i_n)=\mathbf{p} \} \leq e^{n H(\mathbf{p})}.
\end{equation}
The claim now follows by applying~\eqref{eq:02} to each block $\left(\Pi_{n}^{\sigma} i_{L_{\delta}(\ii, \sigma_{n+1})+1}, \ldots,\Pi_{n}^{\sigma} i_{L_{\delta}(\ii, \sigma_{n})}\right)$ having type $\mathbf{p}_{\sigma_n}$ (for $1\leq n\leq d$).
\end{proof}

\begin{lemma}\label{lem:2}
Fix $\delta>0$, $\sigma\in\mathcal{A}_{\delta}$ and a type $\mathbf{P}=(\mathbf{p}_{\sigma_d};\mathbf{p}_{\sigma_{d-1}};\ldots; \mathbf{p}_{\sigma_1})\in\mathcal{T}_\delta^{\sigma}$. Then for every $1\leq n\leq d$,
\begin{equation}\label{eq:30}
-C_n^{(d),\sigma}(\mathbf{P}) \cdot\log \delta \leq L_{\delta}(\ii,\sigma_n)-L_{\delta}(\ii,\sigma_{n+1}) \leq -\Big(1+\frac{\log \lambda_{\min}}{\log \delta}\Big)\cdot C_n^{(d),\sigma}(\mathbf{P}) \cdot\log \delta,
\end{equation}
where $\ii\in\Sigma_{\delta}^{\sigma}$ is such that $\tau_\delta^{\sigma}(\ii)=\mathbf{P}$ and $\lambda_{\min}\coloneqq \min_{i,n}\lambda_i^{(n)}>0$. 
\end{lemma}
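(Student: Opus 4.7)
The plan is to extract from the $\delta$-stopping definitions a small upper-triangular linear system satisfied (up to a controlled error) by the block lengths $|\ii(\delta,n)|$, and then to invert this system using precisely the recursion that defines $C_n^{(d),\sigma}(\mathbf{P})$.

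First I would take logarithms of~\eqref{eq:10} at each coordinate $\sigma_n$: this gives, for every $1\leq n\leq d$,
\begin{equation*}
-\log\delta \;\leq\; -\sum_{\ell=1}^{L_\delta(\ii,\sigma_n)}\log\lambda_{i_\ell}^{(\sigma_n)} \;\leq\; -\log\delta-\log\lambda_{\min},
\end{equation*}
simply because the last factor in the stopping has $\lambda_{i_{L_\delta(\ii,\sigma_n)}}^{(\sigma_n)}\geq\lambda_{\min}$. Next I would split the range of summation according to the consecutive $\sigma$-ordered blocks indexed by $m=n,n+1,\dots,d$. The key algebraic observation is that, whenever $\ell$ lies in block $m\geq n$, the maps $f_{i_\ell}$ and $f_{\Pi_m^\sigma i_\ell}$ overlap exactly on $E_m^\sigma$, hence also on $E_n^\sigma$ (since $n\leq m$), so their $\sigma_n$-contractions coincide: $\lambda_{i_\ell}^{(\sigma_n)}=\lambda_{\Pi_m^\sigma i_\ell}^{(\sigma_n)}$. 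Summing block by block and regrouping by the type $\mathbf{p}_{\sigma_m}$ therefore produces the identity
\begin{equation*}
-\sum_{\ell=1}^{L_\delta(\ii,\sigma_n)}\log\lambda_{i_\ell}^{(\sigma_n)} \;=\;\sum_{m=n}^{d}|\ii(\delta,m)|\,\chi_n^\sigma(\mathbf{p}_{\sigma_m}),
\end{equation*}
so that altogether
\begin{equation*}
\sum_{m=n}^{d}|\ii(\delta,m)|\,\chi_n^\sigma(\mathbf{p}_{\sigma_m}) \;\in\;[-\log\delta,\;-\log\delta-\log\lambda_{\min}],\qquad n=1,\dots,d.
\end{equation*}

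This is an upper-triangular linear system in the unknowns $|\ii(\delta,n)|$ with strictly positive diagonal coefficients $\chi_n^\sigma(\mathbf{p}_{\sigma_n})>0$. The crucial recognition is that~\eqref{eq:15} is equivalent to
\begin{equation*}
\sum_{m=n}^{d}C_m^{(d),\sigma}(\mathbf{P})\,\chi_n^\sigma(\mathbf{p}_{\sigma_m})=1\qquad(n=1,\dots,d),
\end{equation*}
so $|\ii(\delta,n)|=-C_n^{(d),\sigma}(\mathbf{P})\log\delta$ is exactly the solution obtained when every right-hand side equals $-\log\delta$. The bounds~\eqref{eq:30} are then obtained by backward induction on $n$ from $n=d$ down to $n=1$: the base case is immediate upon dividing the $d$-th equation by $\chi_d^\sigma(\mathbf{p}_{\sigma_d})$; in the inductive step, I would isolate $|\ii(\delta,n)|\chi_n^\sigma(\mathbf{p}_{\sigma_n})$ using the $n$-th inequality, insert the inductive estimates for $|\ii(\delta,m)|$ with $m>n$, and simplify the result using the above recursion identity, so that the ``$-\log\delta$'' contribution collapses to $-C_n^{(d),\sigma}(\mathbf{P})\chi_n^\sigma(\mathbf{p}_{\sigma_n})\log\delta$ and the slack contributes the $-\log\lambda_{\min}$ factor.

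The main obstacle will be bookkeeping: the off-diagonal coefficients $\chi_n^\sigma(\mathbf{p}_{\sigma_m})$ for $m>n$ can be comparable in size to the diagonal one, so one has to check carefully that the $\delta$-stopping slacks at different scales (each in $[0,-\log\lambda_{\min}]$) propagate through the triangular inversion in a way that is absorbed into the announced multiplicative factor $1+\log\lambda_{\min}/\log\delta$. The cancellation that makes this work is exactly what the recursion~\eqref{eq:15} was designed to provide. Degenerate blocks with $|\ii(\delta,n)|=0$ are handled by the convention $\mathbf{p}_{\sigma_n}=\mathbf{0}$ noted after the definition of $\tau_\delta(\ii,\sigma_n)$, which renders the corresponding equation trivial and consistent with~\eqref{eq:30}.
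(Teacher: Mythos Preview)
Your proposal is correct and follows essentially the same approach as the paper: take logarithms in the $\delta$-stopping inequalities, rewrite the resulting sums block by block using the types to obtain $\sum_{m=n}^{d}|\ii(\delta,m)|\,\chi_n^{\sigma}(\mathbf{p}_{\sigma_m})\in[-\log\delta,\,-\log\delta-\log\lambda_{\min}]$, and then solve this upper-triangular system by backward induction on $n$ using the recursion~\eqref{eq:15}. Your explicit justification that $\lambda_{i_\ell}^{(\sigma_n)}=\lambda_{\Pi_m^{\sigma}i_\ell}^{(\sigma_n)}$ via exact overlap on $E_m^{\sigma}\supseteq E_n^{\sigma}$ is a detail the paper leaves implicit, and your reformulation of~\eqref{eq:15} as $\sum_{m=n}^{d}C_m^{(d),\sigma}(\mathbf{P})\,\chi_n^{\sigma}(\mathbf{p}_{\sigma_m})=1$ makes the induction step slightly more transparent, but the argument is otherwise identical.
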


\begin{proof}
Recall the abbreviation $|\ii(\delta,n)|= L_{\delta}(\ii,\sigma_n)-L_{\delta}(\ii,\sigma_{n+1})$ and that $L_{\delta}(\ii,\sigma_{d+1})=0$. From the definition~\eqref{eq:10} of the $\delta$-stopping of $\ii\in\Sigma_{\delta}^{\sigma}$ in each coordinate $1\leq \sigma_n\leq d$,
\begin{equation*}
\prod_{m=n}^{d}\,\, \prod_{\ell=L_{\delta}(\ii,\sigma_{m+1})+1}^{L_{\delta}(\ii,\sigma_m)} \lambda_{i_{\ell}}^{(\sigma_n)} = \prod_{\ell=1}^{L_{\delta}(\ii,\sigma_n)} \lambda_{i_{\ell}}^{(\sigma_n)} \leq \delta < \lambda_{\min}^{-1}\cdot \prod_{m=n}^{d}\,\, \prod_{\ell=L_{\delta}(\ii,\sigma_{m+1})+1}^{L_{\delta}(\ii,\sigma_m)} \lambda_{i_{\ell}}^{(\sigma_n)} .
\end{equation*}
In particular, if $\tau_\delta^{\sigma}(\ii)=\mathbf{P}=(\mathbf{p}_{\sigma_d};\mathbf{p}_{\sigma_{d-1}};\ldots; \mathbf{p}_{\sigma_1})\in\mathcal{T}_\delta^{\sigma}$, then after taking logarithms
\begin{multline*}
\sum_{m=n}^{d} |\ii(\delta,m)|\cdot \chi_n^{\sigma}(\mathbf{p}_{\sigma_m}) 
= \sum_{m=n}^{d}\; |\ii(\delta,m)| \frac{-1}{|\ii(\delta,m)|}\sum_{\ell=L_{\delta}(\iih,m+1)+1}^{L_{\delta}(\iih,m)} \log \lambda_{i_{\ell}}^{(\sigma_n)} \\
\geq -\log \delta 
> \log \lambda_{\min} +\sum_{m=n}^{d} |\ii(\delta,m)|\cdot \chi_n^{\sigma}(\mathbf{p}_{\sigma_m}).
\end{multline*}
Expressing $|\ii(\delta,n)|$, we obtain
\begin{multline*}
\frac{-1}{\chi_n^{\sigma}(\mathbf{p}_{\sigma_n})} \bigg( \log \delta + \sum_{m=n+1}^{d} |\ii(\delta,m)|\cdot \chi_n^{\sigma}(\mathbf{p}_{\sigma_m}) \bigg) \\
\leq |\ii(\delta,n)| <
\frac{-1}{\chi_n^{\sigma}(\mathbf{p}_{\sigma_n})} \bigg( \log \delta + \log \lambda_{\min} + \sum_{m=n+1}^{d} |\ii(\delta,m)|\cdot \chi_n^{\sigma}(\mathbf{p}_{\sigma_m}) \bigg).
\end{multline*}
We continue by induction on decreasing $n$ starting from $n=d$. In this case
\begin{equation*}
\frac{-\log \delta}{\chi_d^{\sigma}(\mathbf{p}_{\sigma_d})} \leq |\ii(\delta,d)| < \frac{-\log \delta}{\chi_d^{\sigma}(\mathbf{p}_{\sigma_d})} \Big(1+\frac{\log \lambda_{\min}}{\log \delta}\Big), \;\text{ giving } C_d^{(d),\sigma}(\mathbf{P})=\frac{1}{\chi_d^{\sigma}(\mathbf{p}_{\sigma_d})}.
\end{equation*}
Next, we assume~\eqref{eq:30} for $m\in\{n+1,\ldots,d\}$ and prove the claim for $n\leq d-1$:
\begin{align*}
|\ii(\delta,n)| &< 
\frac{-1}{\chi_n^{\sigma}(\mathbf{p}_{\sigma_n})} \bigg(\! \log \delta + \log \lambda_{\min} - \sum_{m=n+1}^{d} \!\Big(1+\frac{\log \lambda_{\min}}{\log \delta}\Big)\cdot C_m^{(d),\sigma}(\mathbf{P}) \chi_n^{\sigma}(\mathbf{p}_{\sigma_m}) \cdot\log \delta \bigg) \\
&= \Big(1+\frac{\log \lambda_{\min}}{\log \delta}\Big)
\bigg(\! 1-\!\sum_{m=n+1}^{d} C_m^{(d),\sigma}(\mathbf{P})\cdot \chi_n^{\sigma}(\mathbf{p}_{\sigma_m})\! \bigg) \frac{-\log \delta}{\chi_n^{\sigma}(\mathbf{p}_{\sigma_n})}\\
&\stackrel{\eqref{eq:15}}{=} -\Big(1+\frac{\log \lambda_{\min}}{\log \delta}\Big)C_n^{(d),\sigma}(\mathbf{P}) \cdot\log \delta.
\end{align*}
The lower bound for $|\ii(\delta,n)|$ is the same without the $\log \lambda_{\min}$.
\end{proof}

\begin{lemma}\label{lem:4}
For any $\sigma\in\mathcal{S}_d$, we have $\sigma\in\mathcal{A}$ if and only if $\mathcal{Q}^{\sigma}\neq\emptyset$. Moreover, $\mathcal{T}_{\delta}^{\sigma}$ becomes dense in $\mathcal{Q}^{\sigma}$ as $\delta\to 0$.
\end{lemma}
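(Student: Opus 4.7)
The plan is to first show $\mathcal{T}_\delta^\sigma\subseteq\mathcal{Q}^\sigma$ for every $\delta\in(0,1)$, which instantly settles the forward implication $\sigma\in\mathcal{A}\Rightarrow\mathcal{Q}^\sigma\neq\emptyset$, and then to construct, for every $\mathbf{P}\in\mathcal{Q}^\sigma$ and every sufficiently small $\delta>0$, a word $\ii^{(\delta)}\in\Sigma_\delta^\sigma$ whose type $\tau_\delta^\sigma(\ii^{(\delta)})$ tends to $\mathbf{P}$. For the inclusion, fix $\mathbf{P}=\tau_\delta^\sigma(\ii)\in\mathcal{T}_\delta^\sigma$; the $\sigma$-ordering forces $|\ii(\delta,n)|\geq 0$, and the upper bound of Lemma~\ref{lem:2} reads
\begin{equation*}
0\leq|\ii(\delta,n)|\leq -\Bigl(1+\tfrac{\log\lambda_{\min}}{\log\delta}\Bigr)C_n^{(d),\sigma}(\mathbf{P})\log\delta.
\end{equation*}
The factor multiplying $C_n^{(d),\sigma}(\mathbf{P})$ is strictly positive because $\log\delta$ and $\log\lambda_{\min}$ are both negative, so $-\log\delta>0$ and $1+\log\lambda_{\min}/\log\delta>1>0$. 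Hence $C_n^{(d),\sigma}(\mathbf{P})\geq 0$ for every $n$, i.e.\ $\mathbf{P}\in\mathcal{Q}^\sigma$, establishing the forward direction.

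For the converse and density, first take $\mathbf{P}$ in the relative interior of $\mathcal{Q}^\sigma$, so $C_n^{(d),\sigma}(\mathbf{P})>0$ for all $n$. Set
\begin{equation*}
L_n(\delta):=\lceil -C_n^{(d),\sigma}(\mathbf{P})\log\delta\rceil \quad\text{and}\quad K_n(\delta):=\sum_{m=n}^{d} L_m(\delta),
\end{equation*}
and design $\ii^{(\delta)}\in\Sigma$ so that in positions $K_{n+1}(\delta)<\ell\leq K_n(\delta)$ the digits are drawn from $\mathcal{I}_n^\sigma\subseteq\mathcal{I}$ with empirical frequencies equal to $\mathbf{p}_{\sigma_n}$ up to integer rounding (the tail past $K_1(\delta)$ is arbitrary). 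Summing block by block,
\begin{equation*}
\sum_{\ell=1}^{K_n(\delta)}\log\lambda_{i^{(\delta)}_\ell}^{(\sigma_n)}=-\sum_{m=n}^{d} L_m(\delta)\,\chi_n^\sigma(\mathbf{p}_{\sigma_m})+O(1),
\end{equation*}
and substituting $L_m(\delta)=-C_m^{(d),\sigma}(\mathbf{P})\log\delta+O(1)$ together with the recursive identity $\sum_{m=n}^{d} C_m^{(d),\sigma}(\mathbf{P})\chi_n^\sigma(\mathbf{p}_{\sigma_m})=1$ implied by~\eqref{eq:15} gives the right-hand side as $\log\delta+O(1)$, showing $L_\delta(\ii^{(\delta)},\sigma_n)=K_n(\delta)+O(1)$. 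Since each $L_n(\delta)\to\infty$, the stoppings are strictly ordered (so the tie-breaking $\sigma$-ordering convention is vacuous) and $\ii^{(\delta)}\in\Sigma_\delta^\sigma$, while the empirical frequencies in each block converge to $\mathbf{p}_{\sigma_n}$, giving $\tau_\delta^\sigma(\ii^{(\delta)})\to\mathbf{P}$.

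For a boundary point $\mathbf{P}\in\mathcal{Q}^\sigma$ I approximate by a sequence $\mathbf{P}^{(k)}$ in the relative interior (a convex combination with any fixed interior element of $\mathcal{Q}^\sigma$ does the job, using continuity of the $C_n^{(d),\sigma}$), apply the construction to each $\mathbf{P}^{(k)}$, and diagonalize. This yields density of $\bigcup_{\delta>0}\mathcal{T}_\delta^\sigma$ in $\mathcal{Q}^\sigma$ and in particular gives $\mathcal{T}_\delta^\sigma\neq\emptyset$ for arbitrarily small $\delta$ whenever $\mathcal{Q}^\sigma\neq\emptyset$, completing the equivalence. The main obstacle is the bookkeeping in the verification step: three separate sources of $O(1)$ error---the ceilings in $L_n(\delta)$, the rounding of block frequencies, and the slack between the cumulative contraction sum and $\log\delta$ in the stopping-time definition---must be pooled and shown to be dominated by each $L_n(\delta)\sim -\log\delta$. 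A secondary subtlety is non-emptiness of the relative interior of $\mathcal{Q}^\sigma$; this is routine in non-degenerate settings via perturbation inside the product of simplices $\mathcal{P}^\sigma$, and degenerate configurations (where some $C_n^{(d),\sigma}$ vanish identically on $\mathcal{Q}^\sigma$) can be handled directly, since the corresponding blocks simply collapse to length zero without obstructing the construction.
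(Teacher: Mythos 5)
Your proposal is correct and follows essentially the same route as the paper: the forward implication is read off from Lemma~\ref{lem:2} applied to the type of an existing $\sigma$-ordered approximate cube, and the converse together with density is obtained by assembling a word from blocks of length $\approx -C_n^{(d),\sigma}(\mathbf{P})\log\delta$ whose empirical frequencies approximate $\mathbf{p}_{\sigma_n}$ --- the same discretisation $\widetilde{\mathbf{P}}_{\delta}^{\sigma}$ used in the paper, with your identity $\sum_{m=n}^{d}C_m^{(d),\sigma}(\mathbf{P})\chi_n^{\sigma}(\mathbf{p}_{\sigma_m})=1$ making explicit the verification the paper delegates to Lemma~\ref{lem:2}. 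The one soft spot is your closing remark that degenerate blocks ``collapse to length zero'' (a zero-length block yields the zero vector in that component of the type, not an approximation of $\mathbf{p}_{\sigma_n}$), but this concerns exactly the boundary points where the paper's own choice $B_{\sigma_n}=0$ is no more careful, so it does not set your argument apart from the published proof.
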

\begin{proof}
If $\sigma\in\mathcal{A}$, then for some $\delta>0$ there exists a $\delta$-approximate cube $B_{\delta}(\ii)\in\mathcal{B}_{\delta}^{\sigma}$ which is $\sigma$-ordered and whose type $\tau_{\delta}^{\sigma}(\ii)\in\mathcal{T}_{\delta}^{\sigma}$. By Lemma~\ref{lem:2}, for this type $\tau_{\delta}^{\sigma}(\ii)$, we have $C_n^{(d),\sigma}(\tau_{\delta}^{\sigma}(\ii))\geq 0$ for all $1\leq n\leq d$, implying $\tau_{\delta}^{\sigma}(\ii)\in\mathcal{Q}^{\sigma}$.

Conversely, if $\mathbf{P}_{\!\sigma}\in\mathcal{Q}^{\sigma}$, then for $\delta$ small enough, we construct $\widetilde{\mathbf{P}}_{\delta}^{\sigma}=(\widetilde{\mathbf{p}}_{\sigma_d};\ldots;\widetilde{\mathbf{p}}_{\sigma_1})$, where $\widetilde{\mathbf{p}}_{\sigma_n}=(\widetilde{p}_{\sigma_n}(i))_{i\in\mathcal{I}_n^{\sigma}}$ is such that $\widetilde{\mathbf{P}}_{\delta}^{\sigma}\in\mathcal{T}_{\delta}^{\sigma}$, implying $\sigma\in\mathcal{A}$. Set $\widetilde{p}_{\sigma_n}(i) \coloneqq A_{\sigma_n}(i)/B_{\sigma_n}$, where $A_{\sigma_n}(i)$ for $i\in\mathcal{I}_n^{\sigma}\setminus\{1\}$ is the unique integer for which
\begin{equation}\label{eq:40}
\frac{A_{\sigma_n}(i)}{B_{\sigma_n}}\leq p_{\sigma_n}(i) < \frac{A_{\sigma_n}(i)+1}{B_{\sigma_n}} \;\text{ and }\; A_{\sigma_n}(1) = B_{\sigma_n} - \sum_{i\in\mathcal{I}_n^{\sigma}\setminus\{1\}}A_{\sigma_n}(i),
\end{equation}
moreover, $-C_n^{(d),\sigma}(\mathbf{P}_{\!\sigma}) \cdot\log \delta \leq B_{\sigma_n} \leq -C_n^{(d),\sigma}(\mathbf{P}_{\!\sigma}) \cdot\log \delta - C_n^{(d),\sigma}(\mathbf{P}_{\!\sigma})\cdot \log \lambda_{\min}$ is chosen by Lemma~\ref{lem:2} so that $\widetilde{\mathbf{P}}_{\delta}^{\sigma}\in\mathcal{T}_{\delta}^{\sigma}$. By construction, $|\widetilde{p}_{\sigma_n}(i)-p_{\sigma_n}(i)|=\mathcal{O}\big( (-\log \delta)^{-1} \big)$, in particular, $\widetilde{\mathbf{P}}_{\delta}^{\sigma}\to \mathbf{P}_{\!\sigma}$ coordinate-wise in every component as $\delta\to 0$. Since $\mathbf{P}_{\!\sigma}\in\mathcal{Q}^{\sigma}$ was arbitrary, we conclude that $\mathcal{T}_{\delta}^{\sigma}$ becomes dense in $\mathcal{Q}^{\sigma}$ as $\delta\to 0$.
\end{proof}

\begin{lemma}\label{lem:3}
Fix $\varepsilon_0>0$. There exists $\delta_0(\varepsilon_0)>0$ such that for all $\sigma\in\mathcal{A}$ and $\delta<\delta_0(\varepsilon_0)$ there exists $\widetilde{\mathbf{P}}_{\delta}^{\sigma}\in\mathcal{T}_{\delta}^{\sigma}$ for which
\begin{equation*}
t(\widetilde{\mathbf{P}}_{\delta}^{\sigma}) > \sup _{\mathbf{P}_{\!\sigma} \in \mathcal{Q}^{\sigma}} t(\mathbf{P}_{\!\sigma})-\varepsilon_0. 
\end{equation*}
\end{lemma}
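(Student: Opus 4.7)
The plan is to combine uniform continuity of $t$ on $\mathcal{Q}^{\sigma}$ with the explicit density construction built in the proof of Lemma~\ref{lem:4}. First I would check that $t:\mathcal{P}^{\sigma}\to\mathbb{R}$ is continuous: since $-\log\lambda_i^{(\sigma_n)}>0$ for every $i$ and $n$, each Lyapunov exponent $\chi_n^{\sigma}(\mathbf{p}_{\sigma_n})$ is bounded below by $-\log\lambda_{\max}>0$ uniformly on $\mathcal{P}_n^{\sigma}$, so $1/\chi_n^{\sigma}$ and hence each $C_n^{(d),\sigma}$ is continuous, and the entropies and integrals are continuous as well. Thus $t$ is continuous on the compact set $\mathcal{P}^{\sigma}$, and in particular uniformly continuous on the closed subset $\mathcal{Q}^{\sigma}$.

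Given $\varepsilon_0>0$ and $\sigma\in\mathcal{A}$, the next step is to pick $\mathbf{P}^{\ast\ast}_{\sigma}\in\mathcal{Q}^{\sigma}$ (non-empty by Lemma~\ref{lem:4}) with $t(\mathbf{P}^{\ast\ast}_{\sigma})>\sup_{\mathbf{P}_{\!\sigma}\in\mathcal{Q}^{\sigma}}t(\mathbf{P}_{\!\sigma})-\varepsilon_0/2$; in fact, by compactness, I may take $\mathbf{P}^{\ast\ast}_{\sigma}$ to be a maximiser. The explicit rounding procedure~\eqref{eq:40} from the proof of Lemma~\ref{lem:4}, applied to $\mathbf{P}^{\ast\ast}_{\sigma}$, produces for each small enough $\delta$ a type $\widetilde{\mathbf{P}}_{\delta}^{\sigma}\in\mathcal{T}_{\delta}^{\sigma}$ with $\|\widetilde{\mathbf{P}}_{\delta}^{\sigma}-\mathbf{P}^{\ast\ast}_{\sigma}\|_{\infty}=\mathcal{O}\bigl((-\log\delta)^{-1}\bigr)$. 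Uniform continuity then yields a threshold $\delta_1^{\sigma}(\varepsilon_0)>0$ for which $|t(\widetilde{\mathbf{P}}_{\delta}^{\sigma})-t(\mathbf{P}^{\ast\ast}_{\sigma})|<\varepsilon_0/2$ whenever $\delta<\delta_1^{\sigma}(\varepsilon_0)$, and combining this with the previous estimate gives $t(\widetilde{\mathbf{P}}_{\delta}^{\sigma})>\sup_{\mathcal{Q}^{\sigma}}t-\varepsilon_0$. Since $\mathcal{A}$ is finite, setting $\delta_0(\varepsilon_0)\coloneqq\min_{\sigma\in\mathcal{A}}\delta_1^{\sigma}(\varepsilon_0)$ gives the required uniform threshold.

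The main obstacle I anticipate is the boundary case: if $\mathbf{P}^{\ast\ast}_{\sigma}$ lies on $\partial\mathcal{Q}^{\sigma}$ with some $C_{n_0}^{(d),\sigma}(\mathbf{P}^{\ast\ast}_{\sigma})=0$, then the construction of Lemma~\ref{lem:4} forces $B_{\sigma_{n_0}}=0$ and $\widetilde{\mathbf{p}}_{\sigma_{n_0}}$ fails to be a genuine probability vector. My preferred resolution is to first perturb $\mathbf{P}^{\ast\ast}_{\sigma}$ slightly into the relative interior of $\mathcal{Q}^{\sigma}$ along the directions where the inactive constraint $C_{n_0}=0$ becomes strictly positive, at a cost of at most $\varepsilon_0/4$ by continuity of $t$, and then apply Lemma~\ref{lem:4}'s procedure to the perturbed point. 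Alternatively, one observes that the $n_0$-th term contributes nothing to $t$ whenever $C_{n_0}=0$, so prescribing $\widetilde{\mathbf{p}}_{\sigma_{n_0}}$ arbitrarily perturbs $t$ by only $\mathcal{O}((-\log\delta)^{-1})$. In the degenerate regime where some coordinate has $C_n\equiv 0$ on all of $\mathcal{Q}^{\sigma}$ (cf.~Example~\ref{ex:SelfSim}) the problem reduces to a lower-dimensional instance, so the bookkeeping at the boundary is the only real subtlety rather than a serious obstruction.
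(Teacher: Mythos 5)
Your proposal is correct and follows essentially the same route as the paper: choose a near-maximiser $\widehat{\mathbf{P}}^{\sigma}\in\mathcal{Q}^{\sigma}$ within $\varepsilon_0/2$ of the supremum, approximate it by a genuine type via the rounding construction~\eqref{eq:40} from Lemma~\ref{lem:4}, and conclude by continuity of $t$, with finiteness of $\mathcal{A}$ giving a uniform $\delta_0(\varepsilon_0)$. Your extra care about the boundary case $C_{n_0}^{(d),\sigma}=0$ (where the corresponding block is empty and contributes nothing to $t$) is a bookkeeping refinement the paper leaves implicit, not a change of method.
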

\begin{proof}
Continuity of $t(\mathbf{P}_{\!\sigma})$ for every $\sigma\in\mathcal{A}$ implies that there exist $\widehat{\mathbf{P}}^{\sigma}\in\mathcal{Q}^{\sigma}$ such that $t(\widehat{\mathbf{P}}^{\sigma})>\sup _{\mathbf{P}_{\!\sigma} \in \mathcal{Q}^{\sigma}} t(\mathbf{P}_{\!\sigma})-\varepsilon_0/2$. For $\widehat{\mathbf{P}}^{\sigma}\in\mathcal{Q}^{\sigma}$ we construct $\widetilde{\mathbf{P}}_{\delta}^{\sigma}\in\mathcal{T}_{\delta}^{\sigma}$ as we did in~\eqref{eq:40}. By Lemma~\ref{lem:4} and continuity of $t(\mathbf{P}_{\!\sigma})$, we can choose $\delta_0(\varepsilon_0)>0$ such that $t(\widetilde{\mathbf{P}}_{\delta}^{\sigma}) > t(\widehat{\mathbf{P}}^{\sigma}) - \varepsilon_0/2$ for every $\delta<\delta_0(\varepsilon_0)$.
\end{proof}

\subsection{Proof of Theorem~\ref{thm:main1}}\label{subsec:31}

Recall the definition of $P(\boldsymbol{\varphi})$ from~\eqref{eq:13}. Fix $\delta>0$ and $\sigma\in\mathcal{A}_{\delta}$. For any type $\mathbf{P}=(\mathbf{p}_{\sigma_d};\mathbf{p}_{\sigma_{d-1}};\ldots; \mathbf{p}_{\sigma_1})\in\mathcal{T}_\delta^{\sigma}$, observe that all approximate cubes $B_{\delta}(\ii)$ in its type class $T_\delta^{\sigma}(\mathbf{P})$ have the same value for $\Phi\left(B_{\delta}(\ii)\right)$, namely,
\begin{equation*}
\Phi\left(B_{\delta}(\ii)\right) = \sum_{n=1}^{d} 
\sum_{\ell=L_{\delta}(\ii, \sigma_{n+1})+1}^{L_{\delta}(\ii, \sigma_{n})} \varphi_{n}^{\sigma}\left(\Pi_{n}^{\sigma} i_{\ell}\right) =
\sum_{n=1}^{d} |\ii(\delta,n)| \cdot \int\! \varphi_n^{\sigma}\,\mathrm{d}\mathbf{p}_{\sigma_n},
\end{equation*}
where recall $|\ii(\delta,n)|= L_{\delta}(\ii,\sigma_n)-L_{\delta}(\ii,\sigma_{n+1})$. Hence, grouping according to type class,
\begin{equation*}
Z_{\delta}^{\sigma}(\boldsymbol{\varphi})\coloneqq \sum_{B_{\delta}(\ii) \in \mathcal{B}_{\delta}^{\sigma}} \exp \left[\Phi\left(B_{\delta}(\ii)\right)\right] = 
\sum_{\mathbf{P}\in\mathcal{T}_\delta^{\sigma}} \# T_\delta^{\sigma}(\mathbf{P}) \cdot  \exp \left[ \sum_{n=1}^{d} |\ii(\delta,n)| \cdot \int\! \varphi_n^{\sigma}\,\mathrm{d}\mathbf{p}_{\sigma_n} \right],
\end{equation*}
where $\ii\in\Sigma_{\delta}^{\sigma}$ is such that $\tau_\delta^{\sigma}(\ii)=\mathbf{P}$.
Using Lemma~\ref{lem:1} and \ref{lem:2}, we bound $Z_{\delta}^{\sigma}(\boldsymbol{\varphi})$ from above:
\begin{align}
Z_{\delta}^{\sigma}(\boldsymbol{\varphi}) &\stackrel{\eqref{eq:33}}{\leq} \sum_{\mathbf{P}\in\mathcal{T}_\delta^{\sigma}}  \exp \left[ \sum_{n=1}^{d} |\ii(\delta,n)| \cdot \bigg( H(\mathbf{p}_{\sigma_n})+\int\! \varphi_n^{\sigma}\,\mathrm{d}\mathbf{p}_{\sigma_n} \bigg) \right] \nonumber \\
&\stackrel{\eqref{eq:32}}{\leq} \prod_{n=1}^d \Big( \max_{B_\delta(\ii)\in\mathcal{B}_\delta^{\sigma}} |\ii(\delta,n)|+1 \Big)^{\#\mathcal{I}_n^{\sigma}+1} \!\!\cdot  \max_{\mathbf{P}\in\mathcal{T}_\delta^{\sigma}} \exp\! \left[\sum_{n=1}^{d} |\ii(\delta,n)| \cdot \!\bigg(\! H(\mathbf{p}_{\sigma_n})+\!\int\! \varphi_n^{\sigma}\,\mathrm{d}\mathbf{p}_{\sigma_n} \bigg) \right] \nonumber \\
&\stackrel{\eqref{eq:30}}{\leq} \mathcal{O}\big( (-\log \delta)^{d(N+1)} \big) \cdot  \max_{\mathbf{P}\in\mathcal{T}_\delta^{\sigma}}\, \delta^{-\sum_{n=1}^{d}  \left(1+\mathcal{O}((-\log \delta)^{-1})\right) C_n^{(d),\sigma}(\mathbf{P}) \cdot \left( H(\mathbf{p}_{\sigma_n})+\int\! \varphi_n^{\sigma}\,\mathrm{d}\mathbf{p}_{\sigma_n} \right)  }, \label{eq:34}
\end{align}
and also from below:
\begin{align}
Z_{\delta}^{\sigma}(\boldsymbol{\varphi}) &\stackrel{\eqref{eq:33}}{\geq} \prod_{n=1}^d \big(|\ii(\delta,n)|+1\big)^{-\#\mathcal{I}_n^{\sigma}} \cdot
\max_{\mathbf{P}\in\mathcal{T}_\delta^{\sigma}}  \exp \left[ \sum_{n=1}^{d} |\ii(\delta,n)| \cdot \bigg( H(\mathbf{p}_{\sigma_n})+\int\! \varphi_n^{\sigma}\,\mathrm{d}\mathbf{p}_{\sigma_n} \bigg) \right]   \nonumber \\
&\stackrel{\eqref{eq:30}}{\geq} \mathcal{O}\big( (-\log \delta)^{-dN} \big) \cdot  \max_{\mathbf{P}\in\mathcal{T}_\delta^{\sigma}}\, \delta^{-\sum_{n=1}^{d}  \left(1+\mathcal{O}((-\log \delta)^{-1})\right) C_n^{(d),\sigma}(\mathbf{P}) \cdot \left( H(\mathbf{p}_{\sigma_n})+\int\! \varphi_n^{\sigma}\,\mathrm{d}\mathbf{p}_{\sigma_n} \right)  }. \label{eq:35}
\end{align}
Since $\delta^{-1}>1$, the type $\mathbf{P}\in\mathcal{T}_\delta^{\sigma}$ which maximises the expression is the same which maximises the sum in the exponent. 

Recall $t(\mathbf{P}_{\!\sigma})$ from~\eqref{eq:36}. We are now ready to bound the pressure from above,
\begin{align*}
\overline{P}(\boldsymbol{\varphi}) &= \limsup_{\delta \rightarrow 0} \frac{-1}{\log \delta}\, 
\log \bigg[\sum_{\sigma \in \mathcal{A}_{\delta}} Z_{\delta}^{\sigma}(\boldsymbol{\varphi}) \bigg] \leq \limsup_{\delta \rightarrow 0} \frac{-1}{\log \delta}\, 
\log \bigg[d! \cdot \max_{\sigma \in \mathcal{A}_{\delta}} Z_{\delta}^{\sigma}(\boldsymbol{\varphi}) \bigg] \\
&\stackrel{\eqref{eq:34}}{\leq} \limsup_{\delta \rightarrow 0}\, \max_{\sigma \in \mathcal{A}_{\delta}} \, \max_{\mathbf{P}_{\!\sigma}\in\mathcal{T}_\delta^{\sigma}}\,
\sum_{n=1}^{d}  \left(1+\mathcal{O}((-\log \delta)^{-1})\right) C_n^{(d),\sigma}(\mathbf{P}_{\!\sigma}) \cdot \bigg( H(\mathbf{p}_{\sigma_n})+\int\! \varphi_n^{\sigma}\,\mathrm{d}\mathbf{p}_{\sigma_n} \bigg) \\
&\leq \max_{\sigma \in \mathcal{A}}\, \sup_{\mathbf{P}_{\! \sigma}\in\mathcal{Q}^{\sigma}}\, t(\mathbf{P}_{\! \sigma}) \cdot \big( 1+ \lim _{\delta \rightarrow 0}\,\mathcal{O}((-\log \delta)^{-1}) \big) = \max_{\sigma \in \mathcal{A}}\, \sup_{\mathbf{P}_{\! \sigma}\in\mathcal{Q}^{\sigma}}\, t(\mathbf{P}_{\! \sigma}),
\end{align*}
where the last inequality holds because $\mathcal{A}_{\delta}\subseteq\mathcal{A}$ and $\mathcal{T}_\delta^{\sigma}\subset \mathcal{Q}^{\sigma}$. Similarly,
\begin{equation*}
\underline{P}(\boldsymbol{\varphi}) \geq \liminf_{\delta \rightarrow 0} \frac{\log\! \big[\!\max_{\sigma \in \mathcal{A}_{\delta}} Z_{\delta}^{\sigma}(\boldsymbol{\varphi}) \big]}{-\log \delta} 
\!\stackrel{\eqref{eq:35}}{\geq} \liminf_{\delta \rightarrow 0}\, \max_{\sigma \in \mathcal{A}_{\delta}} \, \max_{\mathbf{P}_{\!\sigma}\in\mathcal{T}_\delta^{\sigma}}\,
\left(1+\mathcal{O}((-\log \delta)^{-1})\right) t(\mathbf{P}_{\!\sigma}).
\end{equation*}
We are only interested in the limit as $\delta\to 0$, hence, we may assume that $\delta<\delta_0(\varepsilon_0)$ given by Lemma~\ref{lem:3}. Using the type $\widetilde{\mathbf{P}}_{\delta}^{\sigma}\in\mathcal{T}_{\delta}^{\sigma}$ constructed in Lemma~\ref{lem:3}, we conclude,
\begin{equation*}
\liminf_{\delta \rightarrow 0}\, \max_{\sigma \in \mathcal{A}_{\delta}} \, \max_{\mathbf{P}_{\!\sigma}\in\mathcal{T}_\delta^{\sigma}}
\left(1+\mathcal{O}((-\log \delta)^{-1})\right) t(\mathbf{P}_{\!\sigma})
\geq  \liminf_{\delta \rightarrow 0}\, \max_{\sigma \in \mathcal{A}_{\delta}} \, 
t(\widetilde{\mathbf{P}}_{\delta}^{\sigma})
\geq \max_{\sigma \in \mathcal{A}}\, \sup _{\mathbf{P}_{\!\sigma} \in \mathcal{Q}^{\sigma}} t(\mathbf{P}_{\!\sigma})-\varepsilon_0. 
\end{equation*}
Since $\varepsilon_0$ is arbitrary, this shows that $\overline{P}(\boldsymbol{\varphi})=\underline{P}(\boldsymbol{\varphi})$, implying that the limit $P(\boldsymbol{\varphi})$ exists and is equal to $\max_{\sigma \in \mathcal{A}}\, \sup _{\mathbf{P}_{\!\sigma} \in \mathcal{Q}^{\sigma}} t(\mathbf{P}_{\!\sigma})$, which concludes the proof of Theorem~\ref{thm:main1}.

\subsection{Proof of Corollary~\ref{cor:main}}

The upper bound $P(\boldsymbol{\varphi}) \leq \max _{\sigma \in \mathcal{A}}\, T_d^{\sigma}$ follows from Proposition~\ref{prop:main} since $\mathcal{Q}^{\sigma}\subseteq \mathcal{P}^{\sigma}$. If $\mathcal{A}=\{\sigma\}$ then $\mathcal{Q}^{\omega}=\emptyset$ for all $\omega\neq \sigma$ by Lemma~\ref{lem:4} which implies that $\mathcal{Q}^{\sigma}=\mathcal{P}^{\sigma}$. Hence, Proposition~\ref{prop:main} implies that in this case $P(\boldsymbol{\varphi}) = T_d^{\sigma}$. The proof is complete.

\section{Proof of Theorem~\ref{thm:Lqmain}}\label{sec:proofLq}

In what follows, we write $A\lesssim B$ if there exists a constant $c$ depending only on the sponge $F$ such that $A\leq cB$. Similarly, $A\gtrsim B$ if $A\geq cB$ and $A\approx B$ if $A\lesssim B$ and $A\gtrsim B$. For example, if $\ii$ is $\sigma$-ordered at scale $\delta$ with type $\tau_{\delta}^{\sigma}(\ii)$ then the conclusion of Lemma~\ref{lem:2} can be written as $L_{\delta}(\ii,\sigma_n)-L_{\delta}(\ii,\sigma_{n+1}) \approx -C_n^{(d),\sigma}(\tau_{\delta}^{\sigma}(\ii)) \cdot\log \delta$ for $1\leq n\leq d$. Recall, 	$\widetilde{\nu}_{\boldsymbol{\mu}}$ denotes the Bernoulli measure $\boldsymbol{\mu}^{\mathbb{N}}$ and $\nu_{\boldsymbol{\mu}}=\widetilde{\nu}_{\boldsymbol{\mu}}\circ \pi^{-1}$ is its push-forward. 

\begin{lemma}\label{lem:MeasureSymbCube}
Assume $\ii$ is $\sigma$-ordered at scale $\delta$. Then
\begin{equation*}
	\widetilde{\nu}_{\boldsymbol{\mu}}(B_{\delta}(\ii)) = \prod_{n=1}^{d} \prod_{\ell=L_{\delta}(\ii,\sigma_{n+1})+1}^{L_{\delta}(\ii,\sigma_{n})} \mu_n^{\sigma}(\Pi_n^{\sigma}i_{\ell}) 
	\approx \delta^{S(\tau_{\delta}^{\sigma}(\ii))}.
\end{equation*}
If $F$ satisfies the SPPC then $\nu_{\boldsymbol{\mu}}(\pi(B_{\delta}(\ii)))=\widetilde{\nu}_{\boldsymbol{\mu}}(B_{\delta}(\ii))$. 
\end{lemma}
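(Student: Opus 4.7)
The first equation follows by unwinding the definition~\eqref{eq:29}. Because $\ii$ is $\sigma$-ordered the stopping indices satisfy $L_{\delta}(\ii,\sigma_d)\leq\ldots\leq L_{\delta}(\ii,\sigma_1)$, and because $\Pi_n^\sigma\circ\Pi_m^\sigma=\Pi_n^\sigma$ for $n\leq m$ (which follows directly from the construction of the projections and the fact that exact overlap on $E_m^\sigma$ forces exact overlap on $E_n^\sigma$), the nested conditions $|\Pi_n^\sigma\jj\wedge\Pi_n^\sigma\ii|\geq L_\delta(\ii,\sigma_n)$ are equivalent to imposing, for each $n=d,d-1,\ldots,1$ and each $\ell\in(L_\delta(\ii,\sigma_{n+1}),L_\delta(\ii,\sigma_n)]$, the single constraint $\Pi_n^\sigma j_\ell=\Pi_n^\sigma i_\ell$. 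Since the coordinates of $\jj$ are i.i.d.\ under $\widetilde{\nu}_{\boldsymbol{\mu}}=\boldsymbol{\mu}^{\mathbb N}$ and $\mu_n^\sigma(i)=\sum_{j:\,\Pi_n^\sigma j=i}\mu(j)$ by~\eqref{eq:300}, each such block constraint contributes an independent factor $\mu_n^\sigma(\Pi_n^\sigma i_\ell)$, and multiplying across $\ell$ and $n$ yields the stated product formula.

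To obtain the asymptotic $\widetilde{\nu}_{\boldsymbol{\mu}}(B_\delta(\ii))\approx \delta^{S(\tau_\delta^\sigma(\ii))}$, I take logarithms of the product and regroup by $n$, rewriting the exponent as $\sum_{n=1}^d |\ii(\delta,n)|\int\log\boldsymbol{\mu}_n^\sigma\,\mathrm{d}\mathbf{p}_{\sigma_n}$ with $(\mathbf{p}_{\sigma_d};\ldots;\mathbf{p}_{\sigma_1})=\tau_\delta^\sigma(\ii)$. Lemma~\ref{lem:2} gives $|\ii(\delta,n)|=-C_n^{(d),\sigma}(\tau_\delta^\sigma(\ii))\log\delta+O(1)$ with an $O(1)$ that is uniform in $\ii$ and $\delta$, and because $\log\mu_n^\sigma(i)$ is bounded uniformly in $\sigma,n,i$, these $O(1)$ terms produce only a bounded additive error in the logarithm, hence a multiplicative $O(1)$ factor. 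The leading-order contribution is precisely $\log\delta\cdot S(\tau_\delta^\sigma(\ii))$ by the definition~\eqref{eq:360} of $S$, proving the $\approx$.

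For the SPPC statement, the plan is to show $\widetilde{\nu}_{\boldsymbol{\mu}}\bigl(\pi^{-1}(\pi(B_\delta(\ii)))\setminus B_\delta(\ii)\bigr)=0$, which combined with $\nu_{\boldsymbol{\mu}}=\widetilde{\nu}_{\boldsymbol{\mu}}\circ\pi^{-1}$ delivers the desired equality $\nu_{\boldsymbol{\mu}}(\pi(B_\delta(\ii)))=\widetilde{\nu}_{\boldsymbol{\mu}}(B_\delta(\ii))$. The image $\pi(B_\delta(\ii))$ sits inside a coordinate-aligned cuboid $R_\delta(\ii)\subset[0,1]^d$ whose $\sigma_n$-side length equals $\prod_{\ell=1}^{L_\delta(\ii,\sigma_n)}\lambda_{i_\ell}^{(\sigma_n)}\in(\delta/\lambda_{\min},\delta]$. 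Under the SPPC, the $L_\delta(\ii,\sigma_n)$-level cylinders on $E_n^\sigma$ indexed by distinct words in $(\mathcal{I}_n^\sigma)^{L_\delta(\ii,\sigma_n)}$ have pairwise disjoint interiors, and iterating this across $n=d,\ldots,1$ shows that the interiors of the cuboids $R_\delta(\ii')$ associated with distinct $\sigma$-ordered approximate cubes are pairwise disjoint. Consequently any $\jj$ with $\pi(\jj)\in\pi(B_\delta(\ii))$ either belongs to $B_\delta(\ii)$ or satisfies $\pi(\jj)\in\partial R_\delta(\ii)$. The main technical obstacle is then showing that $\nu_{\boldsymbol{\mu}}(\partial R_\delta(\ii))=0$; I would deduce this from assumption~\eqref{eq:19}, which provides for every face of $[0,1]^d$ a map whose image is separated from that face, so iterating the self-similarity $\nu_{\boldsymbol{\mu}}=\sum_i\mu(i)\,\nu_{\boldsymbol{\mu}}\circ f_i^{-1}$ together with strict positivity of $\boldsymbol{\mu}$ yields inductively that $\nu_{\boldsymbol{\mu}}$ charges no coordinate hyperplane, hence no face of $R_\delta(\ii)$, completing the proof.
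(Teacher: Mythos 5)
Your treatment of the first two claims is correct and is essentially the paper's own argument: you reduce the defining conditions in~\eqref{eq:29} to the single constraint $\Pi_n^{\sigma}j_\ell=\Pi_n^{\sigma}i_\ell$ on each block (using that exact overlap on $E_m^{\sigma}$ forces exact overlap on $E_n^{\sigma}$ for $n\leq m$) and exploit that $\widetilde{\nu}_{\boldsymbol{\mu}}$ is a product measure — the paper phrases the same computation as a disjoint union of level-$L_{\delta}(\ii,\sigma_1)$ cylinders — and the comparison with $\delta^{S(\tau_{\delta}^{\sigma}(\ii))}$ is, exactly as you say, a direct consequence of Lemma~\ref{lem:2} together with uniform bounds on $C_n^{(d),\sigma}$ and on $\log\mu_n^{\sigma}(i)$. (Minor slip: the side lengths of the circumscribing cuboid lie in $(\lambda_{\min}\delta,\delta]$, not $(\delta/\lambda_{\min},\delta]$; this plays no role in your argument.)

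The SPPC claim is where you depart from the paper, and there is a genuine gap in the step ``consequently any $\jj$ with $\pi(\jj)\in\pi(B_{\delta}(\ii))$ either belongs to $B_{\delta}(\ii)$ or satisfies $\pi(\jj)\in\partial R_{\delta}(\ii)$''. Your disjointness statement concerns only \emph{$\sigma$-ordered} approximate cubes, but a coding $\jj\notin B_{\delta}(\ii)$ of a point of $\pi(B_{\delta}(\ii))$ need not lie in $\Sigma_{\delta}^{\sigma}$: the sets $\Sigma_{\delta}^{\omega}$, $\omega\in\mathcal{A}_{\delta}$, partition $\Sigma$, and if $\jj\in\Sigma_{\delta}^{\omega}$ with $\omega\neq\sigma$ its approximate cube and cuboid are of a different shape and your pairwise-disjointness argument says nothing about them. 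Interior-disjointness across different orderings is true, but it requires a separate argument (using the tie-breaking convention in the definition of $\sigma$-ordering and the standing assumption that no two maps of $\mathcal{F}$ coincide), which you have not supplied. A second, more cosmetic, issue: even for a $\sigma$-ordered $\jj$, disjointness of interiors only yields $\pi(\jj)\in\partial R_{\delta}(\ii)\cup\partial R_{\delta}(\jj)$, not $\pi(\jj)\in\partial R_{\delta}(\ii)$; this is harmless because you may take the union of the finitely many cuboid boundaries at scale $\delta$, but the ordering issue is not. The paper's route avoids any case analysis on orderings: if $\pi(\jj)=\pi(\kk)$ with $\kk\in B_{\delta}(\ii)$ and $\jj\notin B_{\delta}(\ii)$, then $\jj$ and $\kk$ first differ at some index $m\leq L_{\delta}(\ii,\sigma_1)$ (membership in $B_{\delta}(\ii)$ depends only on the first $L_{\delta}(\ii,\sigma_1)$ symbols), and the SPPC at level $n=d$ (the rectangular open set condition, together with $f_{j_m}\neq f_{k_m}$) forces $\pi(\jj)$ onto the boundary of a level-$m$ cylinder; so the exceptional set sits inside the countable union of cylinder boundaries, which is $\nu_{\boldsymbol{\mu}}$-null by the hyperplane-nullity you also invoke via~\eqref{eq:19} (your sketch of that last step is at the same level of detail as the paper's, which defers to a reference). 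If you want to keep your geometric route, you must prove interior-disjointness for \emph{all} pairs of approximate cubes in $\bigcup_{\omega\in\mathcal{A}_{\delta}}\mathcal{B}_{\delta}^{\omega}$, not only the $\sigma$-ordered ones.
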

\begin{proof}
We start with the first equality. From definition~\eqref{eq:29} of $B_{\delta}(\ii)$ it follows that an approximate cube is the disjoint union of level $L_{\delta}(\ii,\sigma_1)$ cylinder sets:
\begin{equation*}
\big\{ [j_1,\ldots,j_{L_{\delta}(\ii,\sigma_1)}]:\, \Pi_{n}^{\sigma}j_{\ell}=\Pi_n^{\sigma}i_{\ell} \text{ for } \ell=L_{\delta}(\ii,\sigma_{n+1})+1,\ldots,L_{\delta}(\ii,\sigma_n) \text{ and } 1\leq n\leq d \big\}.
\end{equation*}
For each such cylinder, $\widetilde{\nu}_{\boldsymbol{\mu}}([j_1,\ldots,j_{L_{\delta}(\ii,\sigma_1)}]) = \prod_{\ell=1}^{L_{\delta}(\ii,\sigma_1)}\mu(j_{\ell})$. Adding up and using multiplicativity, we obtain
\begin{equation*}
	\widetilde{\nu}_{\boldsymbol{\mu}}(B_{\delta}(\ii)) = \prod_{n=1}^{d}\, \prod_{\ell=L_{\delta}(\ii,\sigma_{n+1})+1}^{L_{\delta}(\ii,\sigma_{n})}\, \sum_{j\in\mathcal{I}:\, \Pi_n^{\sigma}j=\Pi_n^{\sigma}i_{\ell}}\mu(j) = \prod_{n=1}^{d}\, \prod_{\ell=L_{\delta}(\ii,\sigma_{n+1})+1}^{L_{\delta}(\ii,\sigma_{n})}\, \mu_n^{\sigma}(\Pi_n^{\sigma}i_{\ell}).
\end{equation*}
The second relation is a direct consequence of Lemma~\ref{lem:2}:
\begin{align*}
\prod_{n=1}^{d}\, \prod_{\ell=L_{\delta}(\ii,\sigma_{n+1})+1}^{L_{\delta}(\ii,\sigma_{n})}\, \mu_n^{\sigma}(\Pi_n^{\sigma}i_{\ell})
&= \prod_{n=1}^{d}\, \prod_{i\in\mathcal{I}_n^{\sigma}}\, \mu_n^{\sigma}(i)^{(L_{\delta}(\ii,\sigma_n)-L_{\delta}(\ii,\sigma_{n+1}))\cdot \tau_{\delta}(\ii,\sigma_n)(i)} \\
&\stackrel{\eqref{eq:30}}{\approx} \delta^{-\sum_{n=1}^{d}C_n^{(d),\sigma}(\tau_{\delta}^{\sigma}(\ii))\sum_{i\in\mathcal{I}_n^{\sigma}} \tau_{\delta}(\ii,\sigma_n)(i)\cdot \log \mu_n^{\sigma}(i)} = \delta^{S(\tau_{\delta}^{\sigma}(\ii))}.
\end{align*} 
A detailed argument for the last claim can be found in the proof of~\cite[Corollary~2.8]{BARANYAnti201788}. We present a sketch. Let $D\coloneqq \{ x\in F:\, \text{there exist } \ii\neq\jj\in\Sigma \text{ such that } x=\pi(\ii)=\pi(\jj) \}$. If $x\in D$ then the SPPC implies that $x$ must lie on the boundary $\partial f_{i_1\ldots i_n}([0,1]^d)$ of some cylinder set and so $D\subseteq \bigcup_{n=0}^{\infty}\bigcup_{i_1\ldots i_n}\partial f_{i_1\ldots i_n}([0,1]^d)$. It is easy to see that $\nu_{\boldsymbol{\mu}}(\partial[0,1]^d)=0$, therefore, $\nu_{\boldsymbol{\mu}}(D)=0$ which also implies $\nu_{\boldsymbol{\mu}}(\pi(B_{\delta}(\ii)))=\widetilde{\nu}_{\boldsymbol{\mu}}(B_{\delta}(\ii))$. 
\end{proof}
An immediate corollary of Lemma~\ref{lem:MeasureSymbCube} and definition~\eqref{eq:38} of the potential $\boldsymbol{\psi}_q^{\boldsymbol{\mu}}$ is that (when assuming the SPPC) for any approximate cube
\begin{equation*}
\exp \left[\Phi\left(B_{\delta}(\ii)\right)\right]=\big(\widetilde{\nu}_{\boldsymbol{\mu}}\left(B_{\delta}(\ii)\right)\big)^q = \big( \nu_{\boldsymbol{\mu}}(\pi(B_{\delta}(\ii))) \big)^q.
\end{equation*}
As a result, the pressure $P(\boldsymbol{\psi}_q^{\boldsymbol{\mu}})$ can be interpreted as the `symbolic $L^q$ spectrum' of $\widetilde{\nu}_{\boldsymbol{\mu}}$. It remains to transfer this result to the actual $L^q$ spectrum $T(\nu_{\boldsymbol{\mu}},q)$ of $\nu_{\boldsymbol{\mu}}$.

A Euclidean ball centred in $F$ can always be drawn around the image of an approximate cube since $\pi(B_{\delta}(\ii))$ is contained in a hypercube of side length $\delta$. In particular, for all $\ii\in\Sigma$ and $\delta>0$,
\begin{equation}\label{eq:65}
	\pi(B_{\delta}(\ii)) \subseteq B(\pi(\jj),\sqrt{d}\cdot \delta) \;\text{ with any } \jj\in B_{\delta}(\ii).
\end{equation}
However, it is not necessarily true that there exists a uniform constant $c$ such that for any choice of $\ii$, the image $\pi(B_{\delta}(\ii))$ contains a ball centred in $F$ with radius $c \delta$. With a small perturbation of $\ii$ this is possible. Recall our standing assumption~\eqref{eq:19}. We define an injective function $\alpha_{\delta}: \Sigma \to \Sigma$ as follows. If $\ii\in\Sigma_{\delta}^{\sigma}$ then $\alpha_{\delta}(\ii)=\alpha_{\delta}(i)_1,\alpha_{\delta}(i)_2,\ldots$ is defined by the sequence
\begin{multline}\label{eq:73}
i_{1}, \ldots, i_{L_{\delta}(\ii, \sigma_{d})}, k_0^{(\sigma_{d})}, k_1^{(\sigma_{d})}, \ldots, i_{L_{\delta}(\ii, \sigma_{n+1})+1}, \ldots, i_{L_{\delta}(\ii, \sigma_{n})}, k_0^{(\sigma_{n})}, k_1^{(\sigma_{n})}, \ldots,  \\
i_{L_{\delta}(\ii, \sigma_{2})+1}, \ldots, i_{L_{\delta}(\ii, \sigma_{1})}, k_0^{(\sigma_{1})}, k_1^{(\sigma_{1})}, i_{L_{\delta}(\ii,\sigma_{1})+1}, i_{L_{\delta}(\ii,\sigma_{1})+2},\ldots.
\end{multline}
In other words, the pair $k_0^{(\sigma_{n})}, k_1^{(\sigma_{n})}$ is inserted after $i_{L_{\delta}(\ii, \sigma_{n})}$ for each $n=d,d-1,\ldots,1$ (even if $L_{\delta}(\ii, \sigma_{n})=L_{\delta}(\ii, \sigma_{n+1})$), otherwise $\ii$ is left unchanged. This small perturbation of $\ii$ has two useful consequences given in the following lemma. Let $\overline{\pi(B_{\delta}(\ii))}$ denote the smallest axis parallel hyper-rectangle which contains $\pi(B_{\delta}(\ii))$. 

\begin{lemma}\label{lem:BallInCube}
For every $\delta>0$ small enough, $\sigma\in\mathcal{A}$ and $\ii\in\Sigma_{\delta}^{\sigma}$, 
\begin{equation}\label{eq:70}
\widetilde{\nu}_{\boldsymbol{\mu}}(B_{\delta}(\alpha_{\delta}(\ii))) \approx \widetilde{\nu}_{\boldsymbol{\mu}}(B_{\delta}(\ii)),
\end{equation}
moreover, there exists a constant $0<C_0=C_0(F)<1$ such that
\begin{equation}\label{eq:71}
B(\pi(\jj), C_0\cdot \delta) \subset \overline{\pi(B_{\delta}(\alpha_{\delta}(\ii)))} \;\text{ for every } \jj \text{ with } |\jj\wedge \alpha_{\delta}(\ii)| \geq L_{\delta}(\ii,\sigma_{1})+2d.
\end{equation}
\end{lemma}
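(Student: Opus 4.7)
The insertion $\alpha_\delta$ introduces exactly $2d$ extra symbols drawn from the finite collection $\mathcal K = \{k_u^{(n)}\}\subset\mathcal I$; set $M_0 := \min_{k\in\mathcal K}\mu(k)>0$ and $\Lambda_0 := \min_{k\in\mathcal K,\,m}\lambda_k^{(m)}>0$, both depending only on $F$. The first step, which is also the key structural input, is the stopping-time comparison $L_\delta(\alpha_\delta(\ii),\sigma_n)\leq L_\delta(\ii,\sigma_n)+2(d-n)$. This follows by inspection of~\eqref{eq:73}: the first $L_\delta(\ii,\sigma_n)+2(d-n)$ positions of $\alpha_\delta(\ii)$ consist of $i_1,\ldots,i_{L_\delta(\ii,\sigma_n)}$ interleaved with the inserted pairs for $\sigma_{n+1},\ldots,\sigma_d$, and since $\prod_{\ell=1}^{L_\delta(\ii,\sigma_n)}\lambda^{(\sigma_n)}_{i_\ell}\leq\delta$ while each pair factor is at most $1$, the product at that position already lies at or below $\delta$. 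A matching lower bound $L_\delta(\alpha_\delta(\ii),\sigma_n)\geq L_\delta(\ii,\sigma_n) - O(1)$ via $\Lambda_0$ keeps the $\sigma$-ordering intact. Crucially, this places the pair $k_0^{(\sigma_n)},k_1^{(\sigma_n)}$ at positions \emph{after} the $\sigma_n$-stopping time of $\alpha_\delta(\ii)$.

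For~\eqref{eq:70}, invoking the first identity of Lemma~\ref{lem:MeasureSymbCube} gives two products of $\mu_n^\sigma$-factors that differ in at most $2d+O(1)$ terms (the inserted symbols plus the bounded stopping-time shift from the first step), each factor lying in $[M_0,1]$; the ratio is therefore controlled by an $F$-dependent constant.

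For~\eqref{eq:71}, fix $n$ and decompose
$$f_{\alpha_\delta(\ii)|_{L_\delta(\ii,\sigma_1)+2d}} = f_A\circ f_{k_0^{(\sigma_n)}}\circ f_{k_1^{(\sigma_n)}}\circ f_B$$
with $f_A$ the prefix of length $L_\delta(\ii,\sigma_n)+2(d-n)$, i.e.\ immediately before the $\sigma_n$-pair. The defining property~\eqref{eq:19} yields $f_{k_0^{(\sigma_n)}}(f_{k_1^{(\sigma_n)}}([0,1]^d))_{\sigma_n}\subset[r_0,\,1-r_0\Lambda_0]$, and since $f_B([0,1]^d)\subset[0,1]^d$ and $f_A$ acts on the $\sigma_n$-axis as $x\mapsto Q_A x+\tau_A$ with $Q_A\gtrsim\delta$ (from the stopping-time comparison), we obtain
$$\pi(\jj)_{\sigma_n}\in\bigl[\tau_A+r_0 Q_A,\;\tau_A+Q_A-r_0\Lambda_0 Q_A\bigr].$$
Using the nesting ``pre-pair cube $\subset L_\delta(\alpha_\delta(\ii),\sigma_n)$-cylinder $\supseteq\overline{\pi(B_\delta(\alpha_\delta(\ii)))}$'' together with~\eqref{eq:19} to bound the position of the $\min$ and $\max$ of $F$'s $\sigma_n$-projection away from the faces $\{x_{\sigma_n}=0,1\}$, one transfers this pushing to a uniform lower bound $\gtrsim\delta$ on the distance from $\pi(\jj)_{\sigma_n}$ to both $\sigma_n$-endpoints of the enclosing rectangle. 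Taking the minimum over $n$ and dividing by $\sqrt d$ (to inscribe a Euclidean ball in the axis-parallel cuboid) gives the $F$-dependent constant $C_0>0$.

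The main obstacle is the delicate geometric transfer in the final step: the enclosing rectangle $\overline{\pi(B_\delta(\alpha_\delta(\ii)))}$ can be strictly smaller in the $\sigma_n$-direction than the ambient $L_\delta(\alpha_\delta(\ii),\sigma_n)$-cylinder whenever $F$'s $\sigma_n$-projection is a proper subset of $[0,1]$. Promoting the ``pushed-in relative to the pre-pair cube'' bound to a ``pushed-in relative to the enclosing rectangle'' bound requires combining the placement of the pair after the $\sigma_n$-stopping time with the geometric consequences of assumption~\eqref{eq:19} on the extremal $\sigma_n$-coordinates of the projected attractor.
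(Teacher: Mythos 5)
Your stopping-time comparison and the computation showing that $\pi(\jj)_{\sigma_n}$ lies $\gtrsim r_0\delta$ inside the two $\sigma_n$-faces of the pre-pair cylinder $f_A([0,1]^d)$ are correct and coincide with the paper's intermediate estimates. However, there are two genuine gaps. First, for \eqref{eq:70} you assert that the lower bound on the stopping-time shift ``keeps the $\sigma$-ordering intact''. This is unjustified and false in general: coordinates whose $\delta$-stoppings are equal or differ by less than the (bounded) amount by which the insertions move them can swap their order, so $\alpha_{\delta}(\ii)$ may be $\sigma'$-ordered for some $\sigma'\neq\sigma$. In that case Lemma~\ref{lem:MeasureSymbCube} expresses $\widetilde{\nu}_{\boldsymbol{\mu}}(B_{\delta}(\alpha_{\delta}(\ii)))$ through the $\sigma'$-blocks and the measures $\mu_n^{\sigma'}$, so your comparison of ``two products of $\mu_n^{\sigma}$-factors differing in $2d+O(1)$ terms'' does not directly apply. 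The paper handles exactly this point: swaps can only occur between coordinates whose stopping times were within a uniform constant of each other, the index sets and projected measures coincide at all levels except the bounded-length blocks involved in the swap, and hence only a uniformly bounded number of factors change. Your argument is repairable along these lines, but as written the step is missing.

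Second, and more seriously, for \eqref{eq:71} you stop at precisely the step that constitutes the content of the statement. Being $\gtrsim r_0\delta$-deep inside the pre-pair cylinder does not yield the required inclusion, because the target is the enclosing rectangle $\overline{\pi(B_{\delta}(\alpha_{\delta}(\ii)))}$: you must show that in every coordinate $\sigma_n$ the image of the approximate cube itself extends at least $C_0\delta$ both above and below $\pi(\jj)_{\sigma_n}$. Your nesting ``pre-pair cube $\subset$ stopping-level cylinder $\supseteq$ enclosing rectangle'' points the wrong way for this: being far from the faces of a set that merely contains (coordinate-wise) the enclosing rectangle says nothing about the rectangle being large around $\pi(\jj)$. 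You acknowledge this as ``the main obstacle'' but leave it as a prescription rather than a proof, and the mechanism you suggest — using \eqref{eq:19} to bound the extremal $\sigma_n$-coordinates of the projected attractor away from $\{x_{\sigma_n}=0,1\}$ — is not what \eqref{eq:19} provides (it gives one cylinder at distance $r_0$ from each face, not control of $\min$ and $\max$ of $\Pi_{\sigma_n}F$). The paper's proof uses the inserted pairs at \emph{all} levels $n\le\ell\le d$ simultaneously: it shows that $\pi(\jj)$ is at distance $\gtrsim r_0^2\delta$ from the images of the faces $\mathcal{H}_0^{(\sigma_\ell)},\mathcal{H}_1^{(\sigma_\ell)}$ under the pre-pair prefix maps, and then argues that the region delimited by these face images is contained in $\overline{\pi(B_{\delta}(\alpha_{\delta}(\ii)))}$, which is exactly the transfer your write-up omits. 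As it stands, the second half of the lemma is not proved.
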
 

\begin{proof}
We begin with~\eqref{eq:70}. The insertion of $k_0^{(\sigma_{n})}, k_1^{(\sigma_{n})}$ implies that for each $1\leq n\leq d$,
\begin{equation}\label{eq:72}
	0\leq L_{\delta}(\ii,\sigma_{n}) - L_{\delta}(\alpha_{\delta}(\ii),\sigma_{n}) \leq 2(d-n)\frac{\log \lambda_{\min}}{\log \lambda_{\max}}.
\end{equation}
In particular, for any $1\leq m<n \leq d-1$, if $L_{\delta}(\ii,\sigma_{n-m})-L_{\delta}(\ii,\sigma_{n})> 2(d-n+m)\frac{\log \lambda_{\min}}{\log \lambda_{\max}}$, then $L_{\delta}(\alpha_{\delta}(\ii),\sigma_{n})<L_{\delta}(\alpha_{\delta}(\ii),\sigma_{n-m})$, hence, $\sigma_{n-m}$ still precedes $\sigma_{n}$ in the ordering of $\alpha_{\delta}(\ii)$ at scale $\delta$. Therefore, two coordinates $n,m\in\{1,\ldots,d\}$ can potentially switch their order in the ordering of $\ii$ and the ordering of $\alpha_{\delta}(\ii)$ at scale $\delta$ only if $|L_{\delta}(\ii,n)-L_{\delta}(\ii,m)|$ was smaller than a uniformly bounded constant (independent of $\ii$ and $\delta$). As a result, from Lemma~\ref{lem:MeasureSymbCube} it follows that calculating $\widetilde{\nu}_{\boldsymbol{\mu}}(B_{\delta}(\alpha_{\delta}(\ii)))$ involves multiplying the same terms as in $\widetilde{\nu}_{\boldsymbol{\mu}}(B_{\delta}(\ii))$ apart from a uniformly bounded number of terms (that come from potentially switching orders), hence, claim~\eqref{eq:70} follows.

To show~\eqref{eq:71} let $\mathcal{H}_u^{(n)}\coloneqq\{(x_1,\ldots,x_d)\in[0,1]^d:\, x_n=u \}$ and $\ii|k=i_1,\ldots,i_k$. Since there is a uniform upper bound on $L_{\delta}(\ii,\sigma_{n}) - L_{\delta}(\alpha_{\delta}(\ii),\sigma_{n})$ from~\eqref{eq:72}, it follows that the hyper-rectangle $f_{\alpha_{\delta}(\ii)|(L_{\delta}(\ii,\sigma_n)+2(d-n))}([0,1]^d)$ has height $\approx\delta$ in coordinate $\sigma_{n}$. Therefore, the repeated insertion of $k_0^{(\sigma_{n})}, k_1^{(\sigma_{n})}$ after $i_{L_{\delta}(\ii, \sigma_{n})}$ implies from~\eqref{eq:19} that
\begin{equation*}
	\mathrm{dist}\Big( f_{\alpha_{\delta}(\ii)|(L_{\delta}(\ii,\sigma_n)+2(d-n+1))}([0,1]^d),  \bigcup_{n\leq \ell\leq d}  f_{\alpha_{\delta}(\ii)|(L_{\delta}(\ii,\sigma_\ell)+2(d-\ell))} \big(\mathcal{H}_0^{(\sigma_{\ell})}\cup \mathcal{H}_1^{(\sigma_{\ell})}\big) \Big) \gtrsim r_0^2\cdot \delta.
\end{equation*}
In particular, for $n=1$, we obtain using~\eqref{eq:72} that there exist a uniform constant $C_0$ such that for every $\jj$ with $|\jj\wedge \alpha_{\delta}(\ii)| \geq L_{\delta}(\ii,\sigma_{1})+2d$,
\begin{equation*}
B(\pi(\jj), C_0\cdot \delta) \subset \bigcup_{1\leq \ell\leq d}  f_{\alpha_{\delta}(\ii)|(L_{\delta}(\ii,\sigma_\ell)+2(d-\ell))} \big(\mathcal{H}_0^{(\sigma_{\ell})}\cup \mathcal{H}_1^{(\sigma_{\ell})}\big) \subset \overline{\pi(B_{\delta}(\alpha_{\delta}(\ii)))}.
\end{equation*}
\end{proof}

\subsection{Proof of Theorem~\ref{thm:Lqmain}, upper bound}\label{sec:upper}

Let $\{B(x_{\ell},\delta)\}_{\ell}$ be a centred packing of the self-affine sponge $F$ satisfying the SPPC. Let $\mathcal{B}_{\delta}$ be the set of all symbolic $\delta$-approximate cubes and $\mathcal{B}_{\delta}^x$ be the set of those cubes whose image under $\pi$ intersect $B(x,\delta)\cap F$. Since each edge of $\overline{\pi(B_{\delta}(\ii))}$ has length at least $\lambda_{\min} \delta$, moreover, $\overline{\pi(B_{\delta}(\ii))}$ and $\overline{\pi(B_{\delta}(\jj))}$ may intersect only on their boundary due to the SPPC, it follows that there exists a constant $N_0=N_0(F)$ such that $\#\mathcal{B}_{\delta}^x\leq N_0$ uniformly in $x$ and $\delta$. We split the proof into two parts depending on whether $q$ is negative or not. Note that if $A\subseteq B$ then $(\nu(A))^q\leq (\nu(B))^q$ if $q\geq 0$ for any probability measure $\nu$ and $(\nu(A))^q\geq (\nu(B))^q$ if $q< 0$.

First assume $q\geq 0$. Then for all elements of the packing
\begin{equation*}
(\nu_{\boldsymbol{\mu}}\big(B(x_{\ell},\delta))\big)^q = \big(\widetilde{\nu}_{\boldsymbol{\mu}}(\pi^{-1}(B(x_{\ell},\delta)))\big)^q \leq \big(\widetilde{\nu}_{\boldsymbol{\mu}}( \mathcal{B}_{\delta}^{x_{\ell}} )\big)^q.
\end{equation*} 
Furthermore, if we restrict to $q\in[0,1]$ then also 
\begin{equation}\label{eq:64}
\big(\widetilde{\nu}_{\boldsymbol{\mu}}( \mathcal{B}_{\delta}^{x_{\ell}} )\big)^q \leq \sum_{B\in\mathcal{B}_{\delta}^{x_{\ell}}} (\widetilde{\nu}_{\boldsymbol{\mu}}(B))^q.
\end{equation}
Since $\{B(x_{\ell},\delta)\}_{\ell}$ is a packing, there is a uniform bound $N_1$ on the number of different $B(x_{\ell},\delta)$ any one $\delta$-approximate cube $B$ can intersect. Therefore,
\begin{equation*}
\sum_{\ell} \big(\nu_{\boldsymbol{\mu}}(B(x_{\ell}, \delta))\big)^{q} \leq \sum_{\ell} \sum_{B\in\mathcal{B}_{\delta}^{x_{\ell}}} (\widetilde{\nu}_{\boldsymbol{\mu}}(B))^q \leq N_1 \sum_{B\in\mathcal{B}_{\delta}} (\widetilde{\nu}_{\boldsymbol{\mu}}(B))^q.
\end{equation*}
By Lemma~\ref{lem:MeasureSymbCube} and Theorem~\ref{thm:main1} the right hand side after taking $\log$ and dividing by $-\log \delta$ tends to $P(\boldsymbol{\psi}_q^{\boldsymbol{\mu}})$ as $\delta\to 0$ giving the desired upper bound. If $q>1$ then~\eqref{eq:64} holds in the opposite direction, however, we still have $\lesssim$ by Jensen's inequality for convex functions with the implied constant depending on $\#\mathcal{B}_{\delta}^{x_{\ell}}$ and $q$. To conclude as above, we use the uniform upper bound $\#\mathcal{B}_{\delta}^{x_{\ell}}\leq N_0$. The proof is complete for $q\geq 0$.

Now assume $q<0$. This time we use~\eqref{eq:65} to inscribe an approximate cube within each ball of the packing. Specifically, let $\ii_{\ell}\in\Sigma$ satisfy $\pi(\ii_{\ell})=x_{\ell}$ (if there is more than one, choose arbitrarily). Then according to~\eqref{eq:65} we have $\pi(B_{\delta/\sqrt{d}}(\ii_{\ell})) \subseteq B(x_{\ell}, \delta)$ and
\begin{equation*}
\sum_{\ell} \big(\nu_{\boldsymbol{\mu}}(B(x_{\ell}, \delta))\big)^{q} \leq \sum_{\ell} \big(\nu_{\boldsymbol{\mu}}(\pi(B_{\delta/\sqrt{d}}(\ii_{\ell})))\big)^{q} \leq \sum_{\ell} \big(\widetilde{\nu}_{\boldsymbol{\mu}}(B_{\delta/\sqrt{d}}(\ii_{\ell}))\big)^{q} 
\leq \sum_{B\in\mathcal{B}_{\delta/\sqrt{d}}} (\widetilde{\nu}_{\boldsymbol{\mu}}(B))^q,
\end{equation*}
where the second inequality holds because $B_{\delta/\sqrt{d}}(\ii_{\ell})\subseteq \pi^{-1}(\pi(B_{\delta/\sqrt{d}}(\ii_{\ell})))$. The upper bound follows after taking $\log$ of each side, dividing by $-\log \delta$ and taking the limit as $\delta\to 0$.

\subsection{Proof of Theorem~\ref{thm:Lqmain}, lower bound}

We write $t_q^{\boldsymbol{\mu}}(\mathbf{P}_{\!\sigma})$ to indicate that in definition \eqref{eq:36} of $t(\mathbf{P}_{\!\sigma})$ we use the potential $\boldsymbol{\psi}_q^{\boldsymbol{\mu}}$. We use the dominant type that `carries' the pressure $P(\boldsymbol{\psi}_q^{\boldsymbol{\mu}})$ to obtain the lower bound. The proof is split into two parts again depending on whether $q$ is negative or not.

First assume $q\geq 0$. Fix $\varepsilon>0$ and chose (any) $\sigma\in\mathcal{A}$ which maximises $\sup_{\mathbf{P}_{\! \sigma}\in\mathcal{Q}^{\sigma}} t_q^{\boldsymbol{\mu}}(\mathbf{P}_{\!\sigma})$. By Lemma~\ref{lem:4} and~\ref{lem:3}, for every $\delta$ small enough there exists a type $\tau_{\delta}^{\sigma}\in\mathcal{T}_{\delta}^{\sigma}$ such that 
\begin{equation*}
t_q^{\boldsymbol{\mu}}(\tau_{\delta}^{\sigma})\geq P(\boldsymbol{\psi}_q^{\boldsymbol{\mu}})-\varepsilon.
\end{equation*}
From~\eqref{eq:65} it follows that $\pi(B_{\delta}(\ii)) \subseteq B(\pi(\ii),\sqrt{d}\cdot \delta)$ for every $B_{\delta}(\ii)\in T_{\delta}^{\sigma}(\tau_{\delta}^{\sigma})$ (the type class of $\tau_{\delta}^{\sigma}$). We claim that there exists a constant $0<c=c(F)\leq 1$ independent of $\delta$ and a subset $\mathcal{C}_{\delta}^{\sigma}\subseteq T_{\delta}^{\sigma}(\tau_{\delta}^{\sigma})$ with the property that $\#\mathcal{C}_{\delta}^{\sigma}\geq c\cdot \#T_{\delta}^{\sigma}(\tau_{\delta}^{\sigma})$ and the balls $B(\pi(\ii),\sqrt{d}\cdot \delta)$ are pairwise disjoint for $B_{\delta}(\ii)\in \mathcal{C}_{\delta}^{\sigma}$. This is true for the same reason why $\#\mathcal{B}_{\delta}^{x}\leq N_0$ in Section~\ref{sec:upper}. In this case $B(\pi(\ii),2\sqrt{d}\cdot \delta)$ intersects at most $\widetilde{N}_0$ different $\overline{\pi(B_{\delta}(\jj))}$. The subset $\mathcal{C}_{\delta}^{\sigma}$ is constructed inductively by picking an element $B_{\delta}(\ii)\in T_{\delta}^{\sigma}(\tau_{\delta}^{\sigma})$, placing it in $\mathcal{C}_{\delta}^{\sigma}$ and removing any $B_{\delta}(\jj)\in T_{\delta}^{\sigma}(\tau_{\delta}^{\sigma})$ such that $\overline{\pi(B_{\delta}(\jj))}\cap B(\pi(\ii),2\sqrt{d}\cdot \delta)\neq\emptyset$. The process is repeated until all $B_{\delta}(\ii)\in T_{\delta}^{\sigma}(\tau_{\delta}^{\sigma})$ have either been placed in $\mathcal{C}_{\delta}^{\sigma}$ or removed. At each step at most $\widetilde{N}_0$ elements are removed, hence, $\#\mathcal{C}_{\delta}^{\sigma}\geq (\widetilde{N}_0)^{-1}\cdot \#T_{\delta}^{\sigma}(\tau_{\delta}^{\sigma})$. The extra factor of 2 in the radius ensures that $\{ B(\pi(\ii),\sqrt{d}\cdot \delta):\, B_{\delta}(\ii)\in \mathcal{C}_{\delta}^{\sigma} \}$ is a centred packing of $F$ which satisfies
\begin{equation*}
\delta^{-P(\boldsymbol{\psi}_q^{\boldsymbol{\mu}})+\varepsilon} \leq \delta^{-t_q^{\boldsymbol{\mu}}(\tau_{\delta}^{\sigma})} \lesssim \sum_{B_{\delta}(\ii)\in\mathcal{C}_{\delta}^{\sigma}} \big(\widetilde{\nu}_{\boldsymbol{\mu}}(B_{\delta}(\ii))\big)^{q}
\leq \sum_{B_{\delta}(\ii)\in\mathcal{C}_{\delta}^{\sigma}} \big( \nu_{\boldsymbol{\mu}}(B(\pi(\ii),\sqrt{d}\cdot \delta)) \big)^{q} \leq T_{\sqrt{d}\cdot\delta}(\nu_{\boldsymbol{\mu}},q),
\end{equation*}
where the $\lesssim$ holds because $\#\mathcal{C}_{\delta}^{\sigma}\geq c\cdot \#T_{\delta}^{\sigma}(\tau_{\delta}^{\sigma})$. We obtained that $P(\boldsymbol{\psi}_q^{\boldsymbol{\mu}})-\varepsilon \leq T(\nu_{\boldsymbol{\mu}},q)$ for any $\varepsilon>0$, hence, the proof is complete for $q\geq 0$.

Now assume $q< 0$ and fix $\varepsilon>0$. We choose the type $\tau_{\delta}^{\sigma}\in\mathcal{T}_{\delta}^{\sigma}$ with $t_q^{\boldsymbol{\mu}}(\tau_{\delta}^{\sigma})\geq P(\boldsymbol{\psi}_q^{\boldsymbol{\mu}})-\varepsilon$ the same way. This time we want to inscribe balls within the image of each approximate cube $B_{\delta}(\ii)\in T_{\delta}^{\sigma}(\tau_{\delta}^{\sigma})$. This may not be possible, however, we can use the map $\alpha_{\delta}(\cdot)$ defined in~\eqref{eq:73} to obtain another set of approximate cubes with the nice properties given in Lemma~\ref{lem:BallInCube}. More specifically, consider the collection $\mathcal{C}_{\delta}^{\sigma}= \{ B_{\delta}(\alpha_{\delta}(\ii)):\, B_{\delta}(\ii)\in T_{\delta}^{\sigma}(\tau_{\delta}^{\sigma})\}$. Since $\alpha_{\delta}(\cdot)$ is an injection, it follows from the SPPC and~\eqref{eq:71} that $\{ B(\pi(\alpha_{\delta}(\ii)),C_0\cdot \delta):\, B_{\delta}(\ii)\in T_{\delta}^{\sigma}(\tau_{\delta}^{\sigma})\}$ is a centred packing of $F$. We use this packing to bound the $L^q$ spectrum from below
\begin{align*}
T_{C_0\cdot\delta}(\nu_{\boldsymbol{\mu}},q) &\geq \sum_{B_{\delta}(\alpha_{\delta}(\ii))\in\mathcal{C}_{\delta}^{\sigma}} \big( \nu_{\boldsymbol{\mu}}(B(\pi(\alpha_{\delta}(\ii)),C_0\cdot \delta)) \big)^{q} \stackrel{\eqref{eq:71}}{\geq} \sum_{B_{\delta}(\alpha_{\delta}(\ii))\in\mathcal{C}_{\delta}^{\sigma}} \big(\nu_{\boldsymbol{\mu}}(\pi(B_{\delta}(\alpha_{\delta}(\ii))))\big)^{q} \\
&= \sum_{B_{\delta}(\alpha_{\delta}(\ii))\in\mathcal{C}_{\delta}^{\sigma}} \big(\widetilde{\nu}_{\boldsymbol{\mu}}(B_{\delta}(\alpha_{\delta}(\ii)))\big)^{q}
\stackrel{\eqref{eq:70}}{\approx} \sum_{B_{\delta}(\ii)\in T_{\delta}^{\sigma}(\tau_{\delta}^{\sigma})} \big(\widetilde{\nu}_{\boldsymbol{\mu}}(B_{\delta}(\ii))\big)^{q}  \geq  \delta^{-P(\boldsymbol{\psi}_q^{\boldsymbol{\mu}})+\varepsilon},
\end{align*}
by the choice of $\tau_{\delta}^{\sigma}\in\mathcal{T}_{\delta}^{\sigma}$, which completes the proof of the lower bound.

\section{Proof of Theorem~\ref{thm:dimBFmeasure}}\label{sec:proofFrostmanBox}

Using Lemma~\ref{lem:MeasureSymbCube}, we give uniform bounds on the $\nu_{\boldsymbol{\mu}}$ measure of approximate cubes.
\begin{lemma}\label{lem:UniformBoundMeasure}
Assuming the SPPC, any symbolic $\delta$-approximate cube $B_{\delta}(\ii)$ satisfies
\begin{equation*}
\delta^{\max_{\sigma \in \mathcal{A}}\, \sup_{\mathbf{P}_{\! \sigma}\in\mathcal{Q}^{\sigma}} S(\mathbf{P}_{\!\sigma})} \lesssim \nu_{\boldsymbol{\mu}}(\pi(B_{\delta}(\ii))) \lesssim \delta^{\min_{\sigma \in \mathcal{A}}\, \inf_{\mathbf{P}_{\! \sigma}\in\mathcal{Q}^{\sigma}} S(\mathbf{P}_{\!\sigma})}.
\end{equation*}
\end{lemma}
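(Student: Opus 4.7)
The plan is to derive the lemma as an essentially cost-free consequence of Lemma~\ref{lem:MeasureSymbCube}, combined with the fact that every realised type automatically sits in $\mathcal{Q}^{\sigma}$. Every symbolic $\delta$-approximate cube $B_{\delta}(\ii)$ belongs to $\mathcal{B}_{\delta}^{\sigma}$ for the unique $\sigma \in \mathcal{A}_{\delta} \subseteq \mathcal{A}$ for which $\ii$ is $\sigma$-ordered at scale $\delta$. Under the SPPC, Lemma~\ref{lem:MeasureSymbCube} then gives
\begin{equation*}
\nu_{\boldsymbol{\mu}}(\pi(B_{\delta}(\ii))) \;=\; \widetilde{\nu}_{\boldsymbol{\mu}}(B_{\delta}(\ii)) \;\approx\; \delta^{S(\tau_{\delta}^{\sigma}(\ii))}.
\end{equation*}
I would first observe that the implied multiplicative constants here are uniform in $\ii$, $\delta$ and $\sigma$, because the error in Lemma~\ref{lem:2} depends only on $\lambda_{\min}$ and the finitely many cardinalities $\#\mathcal{I}_n^{\sigma}$, and $\mathcal{A}$ itself is finite.

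Next I would record that the realised type $\tau_{\delta}^{\sigma}(\ii)$ always satisfies $\tau_{\delta}^{\sigma}(\ii) \in \mathcal{T}_{\delta}^{\sigma} \subseteq \mathcal{Q}^{\sigma}$. This is precisely the forward direction of Lemma~\ref{lem:4}: Lemma~\ref{lem:2} forces $C_n^{(d),\sigma}(\tau_{\delta}^{\sigma}(\ii)) \geq 0$ for each $1 \leq n \leq d$, since otherwise the corresponding block length $L_{\delta}(\ii,\sigma_n)-L_{\delta}(\ii,\sigma_{n+1})$ would be negative. Consequently
\begin{equation*}
\min_{\sigma\in\mathcal{A}}\,\inf_{\mathbf{P}_{\!\sigma}\in\mathcal{Q}^{\sigma}} S(\mathbf{P}_{\!\sigma}) \;\leq\; S(\tau_{\delta}^{\sigma}(\ii)) \;\leq\; \max_{\sigma\in\mathcal{A}}\,\sup_{\mathbf{P}_{\!\sigma}\in\mathcal{Q}^{\sigma}} S(\mathbf{P}_{\!\sigma}).
\end{equation*}

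To conclude, I would use that $s\mapsto \delta^{s}$ is decreasing for $0<\delta<1$, so substituting the two bounds on $S(\tau_{\delta}^{\sigma}(\ii))$ into the asymptotic reverses the direction of the inequalities and yields exactly the claimed $\delta^{\max\sup} \lesssim \nu_{\boldsymbol{\mu}}(\pi(B_{\delta}(\ii))) \lesssim \delta^{\min\inf}$. There is no real obstacle here; the statement is a bookkeeping corollary of the machinery already in place. The only subtle point to flag is that one must restrict the $\sup$ and $\inf$ to $\mathcal{Q}^{\sigma}$ rather than $\mathcal{P}^{\sigma}$ (so the bounds are generally tighter than the ones in Proposition~\ref{prop:maxminexponent}), and this restriction is legitimate precisely because realised types never leave $\mathcal{Q}^{\sigma}$.
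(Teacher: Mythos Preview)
Your proof is correct and follows essentially the same route as the paper: invoke Lemma~\ref{lem:MeasureSymbCube} to get $\nu_{\boldsymbol{\mu}}(\pi(B_{\delta}(\ii)))\approx\delta^{S(\tau_{\delta}^{\sigma}(\ii))}$, then use $\tau_{\delta}^{\sigma}(\ii)\in\mathcal{Q}^{\sigma}$ (via Lemma~\ref{lem:2}/\ref{lem:4}) to sandwich the exponent. If anything, your justification is slightly sharper than the paper's, which cites the density part of Lemma~\ref{lem:4} when really only the containment $\mathcal{T}_{\delta}^{\sigma}\subseteq\mathcal{Q}^{\sigma}$ is needed.
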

\begin{proof}
From Lemma~\ref{lem:MeasureSymbCube} we know that $\nu_{\boldsymbol{\mu}}(\pi(B_{\delta}(\ii))) 
\approx \delta^{S(\tau_{\delta}^{\sigma}(\ii))}$ assuming $\ii$ is $\sigma$-ordered at scale $\delta$, where $\tau_{\delta}^{\sigma}(\ii)\in\mathcal{T}_{\delta}^{\sigma}$. From Lemma~\ref{lem:4} we also know that $\mathcal{T}_{\delta}^{\sigma}$ becomes dense in $\mathcal{Q}^{\sigma}$ as $\delta\to 0$. Therefore, $\inf_{\mathbf{P}_{\! \sigma}\in\mathcal{Q}^{\sigma}} S(\mathbf{P}_{\!\sigma}) \leq S(\tau_{\delta}^{\sigma}(\ii)) \leq \sup_{\mathbf{P}_{\! \sigma}\in\mathcal{Q}^{\sigma}} S(\mathbf{P}_{\!\sigma})$, completing the proof.
\end{proof}

In the following lemma, we write $t_q^{\boldsymbol{\mu}}(\mathbf{P}_{\!\sigma})$ to indicate that in definition~\eqref{eq:36} of $t(\mathbf{P}_{\!\sigma})$ we use the potential $\boldsymbol{\psi}_q^{\boldsymbol{\mu}}$.

\begin{lemma}\label{lem:asymptotes}
We have
\begin{equation*}
\lim_{q\to +\infty} \frac{-1}{q} \max_{\sigma \in \mathcal{A}}\, \sup_{\mathbf{P}_{\! \sigma}\in\mathcal{Q}^{\sigma}} t_q^{\boldsymbol{\mu}}(\mathbf{P}_{\!\sigma}) = \min_{\sigma \in \mathcal{A}}\, \inf_{\mathbf{P}_{\! \sigma}\in\mathcal{Q}^{\sigma}} S(\mathbf{P}_{\!\sigma})
\end{equation*}
and 
\begin{equation*}
\lim_{q\to -\infty} \frac{-1}{q} \max_{\sigma \in \mathcal{A}}\, \sup_{\mathbf{P}_{\! \sigma}\in\mathcal{Q}^{\sigma}} t_q^{\boldsymbol{\mu}}(\mathbf{P}_{\!\sigma}) = \max_{\sigma \in \mathcal{A}}\, \sup_{\mathbf{P}_{\! \sigma}\in\mathcal{Q}^{\sigma}} S(\mathbf{P}_{\!\sigma}).
\end{equation*}
\end{lemma}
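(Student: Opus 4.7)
The plan is to isolate the $q$-dependence of $t_q^{\boldsymbol{\mu}}$ and reduce both statements to an asymptotic for a linear-in-$q$ functional. Plugging~\eqref{eq:38} into~\eqref{eq:36} gives the clean decomposition
\begin{equation*}
t_q^{\boldsymbol{\mu}}(\mathbf{P}_{\!\sigma}) \;=\; E(\mathbf{P}_{\!\sigma}) \,-\, q\cdot S(\mathbf{P}_{\!\sigma}),
\end{equation*}
where $E(\mathbf{P}_{\!\sigma}) := \sum_{n=1}^d C_n^{(d),\sigma}(\mathbf{P}_{\!\sigma}) H(\mathbf{p}_{\sigma_n})$ is $q$-independent and $S$ is the functional from~\eqref{eq:360}. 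So up to the bounded error term $E$, the object to be controlled is $\sup_{\mathbf{P}_{\!\sigma}}[-qS]$, which is linear in $q$.

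Next I would argue that $\mathcal{Q}^\sigma$ is compact and both $E$ and $S$ are continuous and bounded on it. Since each $\lambda_i^{(n)}\in(0,1)$, the Lyapunov exponents $\chi_n^\sigma(\mathbf{p}_{\sigma_n})$ stay strictly positive on the whole simplex $\mathcal{P}_n^\sigma$, so each $C_n^{(d),\sigma}$ is a continuous function of $\mathbf{P}_{\!\sigma}$ on the compact set $\mathcal{P}^\sigma$, and $\mathcal{Q}^\sigma=\bigcap_n\{C_n^{(d),\sigma}\ge 0\}$ is therefore closed, hence compact. Setting
\begin{equation*}
M \;:=\; \max_{\sigma\in\mathcal{A}}\,\sup_{\mathbf{P}_{\!\sigma}\in\mathcal{Q}^\sigma} |E(\mathbf{P}_{\!\sigma})| \;<\; \infty,
\end{equation*}
the elementary bound $|\sup_X(f+g)-\sup_X g|\le\sup_X|f|$ then gives, uniformly in $q$ and $\sigma$,
\begin{equation*}
\Bigl| \sup_{\mathbf{P}_{\!\sigma}\in\mathcal{Q}^\sigma} t_q^{\boldsymbol{\mu}}(\mathbf{P}_{\!\sigma}) \;-\; \sup_{\mathbf{P}_{\!\sigma}\in\mathcal{Q}^\sigma}\bigl[-q\,S(\mathbf{P}_{\!\sigma})\bigr] \Bigr| \;\leq\; M.
\end{equation*}
Finiteness of $\mathcal{A}$ lets one then take $\max_\sigma$ in both terms without destroying this bound.

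For $q>0$ the identity $\sup_{\mathbf{P}_{\!\sigma}}[-qS]=-q\inf_{\mathbf{P}_{\!\sigma}}S$ gives $\max_\sigma\sup_{\mathbf{P}_{\!\sigma}}[-qS]=-q\cdot\min_\sigma\inf_{\mathbf{P}_{\!\sigma}\in\mathcal{Q}^\sigma}S$, while for $q<0$ the same identity produces $-q\cdot\max_\sigma\sup_{\mathbf{P}_{\!\sigma}\in\mathcal{Q}^\sigma}S$. Dividing the previous display by $-q$ and sending $|q|\to\infty$ kills the $M/|q|$ error and yields the two claimed limits. The argument is essentially routine; the only obstacle is bookkeeping the sign flip between $\sup$ and $\inf$ under multiplication by $-q$, together with the corresponding flip between $\max_\sigma$ and $\min_\sigma$, which is precisely what produces the $\min$ in the $q\to+\infty$ limit and the $\max$ in the $q\to-\infty$ limit.
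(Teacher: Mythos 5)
Your proposal is correct and follows essentially the same route as the paper: write $t_q^{\boldsymbol{\mu}}(\mathbf{P}_{\!\sigma})$ as a $q$-independent entropy term plus $-q\,S(\mathbf{P}_{\!\sigma})$, bound the entropy term uniformly on $\mathcal{Q}^{\sigma}$, and divide the resulting sandwich by $-q$ before letting $|q|\to\infty$. The only cosmetic difference is that the paper gets the uniform bound from the explicit estimates $0\leq H(\mathbf{p}_{\sigma_n})\leq \log\#\mathcal{I}$ and $0\leq C_n^{(d),\sigma}\leq -1/\log\lambda_{\min}$, whereas you invoke compactness of $\mathcal{Q}^{\sigma}$ and continuity of the functionals, which is equally valid.
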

\begin{proof}
The uniform bounds $0\leq H(\mathbf{p}_{\sigma_n})\leq \log \#\mathcal{I}$ and $0\leq C_n^{(d),\sigma}(\mathbf{P}_{\!\sigma}) \leq -1/\log \lambda_{\min}$ hold for all $\mathbf{P}_{\! \sigma}\in\mathcal{Q}^{\sigma}$. Using these, we can bound
\begin{equation*}
\max_{\sigma \in \mathcal{A}}\, \sup_{\mathbf{P}_{\! \sigma}\in\mathcal{Q}^{\sigma}} -q\cdot S(\mathbf{P}_{\!\sigma}) \leq  \max_{\sigma \in \mathcal{A}}\, \sup_{\mathbf{P}_{\! \sigma}\in\mathcal{Q}^{\sigma}} t_q^{\boldsymbol{\mu}}(\mathbf{P}_{\!\sigma}) \leq  \max_{\sigma \in \mathcal{A}}\, \sup_{\mathbf{P}_{\! \sigma}\in\mathcal{Q}^{\sigma}} -q\cdot S(\mathbf{P}_{\!\sigma}) + d \frac{\log \#\mathcal{I}}{-\log \lambda_{\min}}.
\end{equation*}
First assume $q>0$ and divide through by $-q$. We obtain that
\begin{equation*}
\min_{\sigma \in \mathcal{A}}\, \inf_{\mathbf{P}_{\! \sigma}\in\mathcal{Q}^{\sigma}} S(\mathbf{P}_{\!\sigma}) \geq \frac{-1}{q} \max_{\sigma \in \mathcal{A}}\, \sup_{\mathbf{P}_{\! \sigma}\in\mathcal{Q}^{\sigma}} t_q^{\boldsymbol{\mu}}(\mathbf{P}_{\!\sigma}) \geq \min_{\sigma \in \mathcal{A}}\, \inf_{\mathbf{P}_{\! \sigma}\in\mathcal{Q}^{\sigma}} S(\mathbf{P}_{\!\sigma}) - \frac{d}{q}\cdot  \frac{\log \#\mathcal{I}}{-\log \lambda_{\min}}.
\end{equation*}
Taking the limit as $q\to +\infty$ proves the first assertion.

Now assume $q<0$ and again divide through by $-q$. We now obtain that
\begin{equation*}
\max_{\sigma \in \mathcal{A}}\, \sup_{\mathbf{P}_{\! \sigma}\in\mathcal{Q}^{\sigma}} S(\mathbf{P}_{\!\sigma}) \leq \frac{-1}{q} \max_{\sigma \in \mathcal{A}}\, \sup_{\mathbf{P}_{\! \sigma}\in\mathcal{Q}^{\sigma}} t_q^{\boldsymbol{\mu}}(\mathbf{P}_{\!\sigma}) \leq \max_{\sigma \in \mathcal{A}}\, \sup_{\mathbf{P}_{\! \sigma}\in\mathcal{Q}^{\sigma}} S(\mathbf{P}_{\!\sigma}) - \frac{d}{q}\cdot  \frac{\log \#\mathcal{I}}{-\log \lambda_{\min}}.
\end{equation*}
Taking the limit as $q\to -\infty$ completes the proof.
\end{proof}

\begin{lemma}\label{lem:Frostman}
Let $\nu_{\boldsymbol{\mu}}$ be a self-affine measure on the sponge $F$ that satisfies the SPPC. Then
\begin{equation*}
\dim_{\mathrm F} \nu_{\boldsymbol{\mu}} = \min_{\sigma \in \mathcal{A}}\, \inf_{\mathbf{P}_{\! \sigma}\in\mathcal{Q}^{\sigma}} S(\mathbf{P}_{\!\sigma}).
\end{equation*}
\end{lemma}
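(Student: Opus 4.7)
\medskip

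Write $s := \min_{\sigma \in \mathcal{A}} \inf_{\mathbf{P}_{\!\sigma} \in \mathcal{Q}^{\sigma}} S(\mathbf{P}_{\!\sigma})$; this infimum is attained because each $\mathcal{Q}^{\sigma}$ is compact (a closed subset of the compact $\mathcal{P}^{\sigma}$, cut out by the continuous constraints $C_n^{(d),\sigma}(\cdot) \geq 0$) and $S$ is continuous on it (the Lyapunov exponents $\chi_n^{\sigma}(\mathbf{p}_{\sigma_m})$ are uniformly bounded away from $0$). The plan is to establish $\dim_{\mathrm F} \nu_{\boldsymbol{\mu}} \geq s$ and $\dim_{\mathrm F} \nu_{\boldsymbol{\mu}} \leq s$ separately by transferring between Euclidean balls and symbolic approximate cubes, using Lemma~\ref{lem:UniformBoundMeasure} and Lemma~\ref{lem:4} as the main tools.

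\smallskip\noindent\textbf{Lower bound $\dim_{\mathrm F} \geq s$.} I would show the uniform estimate $\nu_{\boldsymbol{\mu}}(B(x,\delta)) \lesssim \delta^s$ for every $x \in F$ and $\delta \in (0,1)$ by a covering argument. Each approximate cube $B_{\delta}(\ii)$ has image contained in the axis-parallel hyper-rectangle $\overline{\pi(B_{\delta}(\ii))}$ whose edge in coordinate $n$ is $\prod_{\ell=1}^{L_{\delta}(\ii,n)}\lambda_{i_\ell}^{(n)} \in (\lambda_{\min}\delta,\delta]$ by definition~\eqref{eq:10} of the $\delta$-stopping. Under the SPPC these hyper-rectangles have pairwise disjoint interiors, so a volume comparison (they each have volume at least $(\lambda_{\min}\delta)^d$, while any hyper-rectangle meeting $B(x,\delta)$ lies inside an axis-parallel cube centred at $x$ of side length $4\delta$) yields a uniform bound $N = N(F,d)$ on the cardinality of $\mathcal{B}_{\delta}^x := \{B_{\delta}(\ii) \in \mathcal{B}_{\delta} : \pi(B_{\delta}(\ii)) \cap B(x,\delta) \neq \emptyset\}$. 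Summing the upper bound from Lemma~\ref{lem:UniformBoundMeasure} over this collection gives
\[
\nu_{\boldsymbol{\mu}}(B(x,\delta)) \;\leq\; \sum_{B \in \mathcal{B}_{\delta}^x} \nu_{\boldsymbol{\mu}}(\pi(B)) \;\lesssim\; N \cdot \delta^{s},
\]
which, since the implied constant is independent of $x$ and $\delta$, shows $\dim_{\mathrm F} \nu_{\boldsymbol{\mu}} \geq s$.

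\smallskip\noindent\textbf{Upper bound $\dim_{\mathrm F} \leq s$.} I would exhibit a sequence of balls with too much mass. Fix $\varepsilon > 0$ and choose $\sigma^* \in \mathcal{A}$ together with $\widehat{\mathbf{P}} \in \mathcal{Q}^{\sigma^*}$ attaining $S(\widehat{\mathbf{P}}) = s$. By Lemma~\ref{lem:4}, for every sufficiently small $\delta > 0$ there is a type $\widetilde{\mathbf{P}}_{\delta} \in \mathcal{T}_{\delta}^{\sigma^*}$ with $\widetilde{\mathbf{P}}_{\delta} \to \widehat{\mathbf{P}}$; continuity of $S$ then yields $S(\widetilde{\mathbf{P}}_{\delta}) \leq s + \varepsilon$ for all $\delta$ small enough. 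Pick any $B_{\delta}(\ii) \in T_{\delta}^{\sigma^*}(\widetilde{\mathbf{P}}_{\delta})$. The containment $\pi(B_{\delta}(\ii)) \subseteq B(\pi(\ii),\sqrt{d}\,\delta)$ from~\eqref{eq:65} together with the lower bound of Lemma~\ref{lem:UniformBoundMeasure} gives
\[
\nu_{\boldsymbol{\mu}}\big(B(\pi(\ii),\sqrt{d}\,\delta)\big) \;\geq\; \nu_{\boldsymbol{\mu}}(\pi(B_{\delta}(\ii))) \;\gtrsim\; \delta^{S(\widetilde{\mathbf{P}}_{\delta})} \;\geq\; \delta^{s+\varepsilon}.
\]
For any $s' > s + \varepsilon$, the quotient $\nu_{\boldsymbol{\mu}}(B(\pi(\ii),\sqrt{d}\,\delta))/(\sqrt{d}\,\delta)^{s'}$ diverges as $\delta \to 0$, so no constant $C$ can satisfy the Frostman condition at exponent $s'$. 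Letting $s' \downarrow s+\varepsilon$ and then $\varepsilon \downarrow 0$ yields $\dim_{\mathrm F} \nu_{\boldsymbol{\mu}} \leq s$, completing the proof.

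\smallskip\noindent\textbf{Main obstacle.} The delicate point is the uniform cardinality bound $\#\mathcal{B}_{\delta}^x \leq N$ in the lower bound step: it is here that the SPPC is indispensable, as it ensures that the distorted ``tiling'' $\{\overline{\pi(B_{\delta}(\ii))}\}$ of $F$ has pairwise essentially disjoint interiors, ruling out the pathological possibility that arbitrarily many approximate cubes pile up around a single point. Everything else is a routine transfer between symbolic measure and Euclidean measure using Lemma~\ref{lem:MeasureSymbCube} and the two ``sandwich'' inclusions provided by~\eqref{eq:65}.
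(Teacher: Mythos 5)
Your proof is correct and follows essentially the same route as the paper: the lower bound via the uniformly bounded number of approximate cubes meeting a ball (a consequence of the SPPC) combined with Lemma~\ref{lem:UniformBoundMeasure}, and the upper bound via density of types (Lemma~\ref{lem:4}), continuity of $S$, the inclusion~\eqref{eq:65}, and the estimate $\nu_{\boldsymbol{\mu}}(\pi(B_{\delta}(\ii)))\approx\delta^{S(\widetilde{\mathbf{P}}_{\delta})}$. The only cosmetic point is that this last type-dependent estimate comes from Lemma~\ref{lem:MeasureSymbCube} (under the SPPC) rather than from the uniform lower bound stated in Lemma~\ref{lem:UniformBoundMeasure}, but you clearly use it in the correct form.
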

\begin{proof}
Let $x\in F$ and $0<\delta<1$ be arbitrary. Recall from Section~\ref{sec:upper} that $\mathcal{B}_{\delta}^x$ denotes the set of those symbolic $\delta$-approximate cubes whose image under $\pi$ intersect $B(x,\delta)\cap F$. Using that $\#\mathcal{B}_{\delta}^x\approx 1$, we obtain from Lemma~\ref{lem:UniformBoundMeasure} that
\begin{equation*}
\nu_{\boldsymbol{\mu}}(B(x,\delta)\cap F) \lesssim \max_{B\in\mathcal{B}_{\delta}^x} \nu_{\boldsymbol{\mu}}(\pi(B)) \lesssim \delta^{\min_{\sigma \in \mathcal{A}}\, \inf_{\mathbf{P}_{\! \sigma}\in\mathcal{Q}^{\sigma}} S(\mathbf{P}_{\!\sigma})},
\end{equation*}    
which shows that $\dim_{\mathrm F} \nu_{\boldsymbol{\mu}} \geq \min_{\sigma \in \mathcal{A}}\, \inf_{\mathbf{P}_{\! \sigma}\in\mathcal{Q}^{\sigma}} S(\mathbf{P}_{\!\sigma})$.

For the other direction, fix $\varepsilon>0$ and choose (any) $\sigma\in\mathcal{A}$ which minimises $\inf_{\mathbf{P}_{\! \sigma}\in\mathcal{Q}^{\sigma}} S(\mathbf{P}_{\!\sigma})$. By Lemma~\ref{lem:4}, $\mathcal{T}_{\delta}^{\sigma}$ becomes dense in $\mathcal{Q}^{\sigma}$, moreover, $S(\mathbf{P}_{\!\sigma})$ is continuous in $\mathbf{P}_{\!\sigma}$, therefore, for every $\delta$ small enough there exists an $\ii\in\Sigma_{\delta/\sqrt{d}}^{\sigma}$ such that $\tau_{\delta/\sqrt{d}}^{\sigma}(\ii)\in\mathcal{T}_{\delta/\sqrt{d}}^{\sigma}$,
\begin{equation*}
\pi(B_{\delta/\sqrt{d}}(\ii))\subseteq B(\pi(\ii),\delta) \;\;\text{ and }\;\; S(\tau_{\delta/\sqrt{d}}^{\sigma}(\ii))\leq \inf_{\mathbf{P}_{\! \sigma}\in\mathcal{Q}^{\sigma}} S(\mathbf{P}_{\!\sigma})+\varepsilon.
\end{equation*}
As a result, Lemma~\ref{lem:UniformBoundMeasure} again implies that
\begin{equation*}
\nu_{\boldsymbol{\mu}}(B(\pi(\ii),\delta)\cap F) \gtrsim \nu_{\boldsymbol{\mu}}\big(\pi(B_{\delta/\sqrt{d}}(\pi(\ii)))\big) \approx \delta^{S(\tau_{\delta/\sqrt{d}}^{\sigma}(\ii))} \gtrsim \delta^{\min_{\sigma \in \mathcal{A}}\, \inf_{\mathbf{P}_{\! \sigma}\in\mathcal{Q}^{\sigma}} S(\mathbf{P}_{\!\sigma})+\varepsilon}.
\end{equation*} 
Since $\varepsilon>0$ was arbitrary, the proof is complete.
\end{proof}
 
\begin{lemma}\label{lem:box}
Let $\nu_{\boldsymbol{\mu}}$ be a self-affine measure on the sponge $F$ that satisfies the SPPC. Then
\begin{equation*}
	\dim_{\mathrm B} \nu_{\boldsymbol{\mu}} = \max_{\sigma \in \mathcal{A}}\, \sup_{\mathbf{P}_{\! \sigma}\in\mathcal{Q}^{\sigma}} S(\mathbf{P}_{\!\sigma}).
\end{equation*}
\end{lemma}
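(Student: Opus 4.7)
The plan is to prove matching inequalities $\overline{\dim}_{\mathrm B}\nu_{\boldsymbol{\mu}} \le \max_{\sigma \in \mathcal{A}} \sup_{\mathbf{P}_{\!\sigma}\in\mathcal{Q}^\sigma} S(\mathbf{P}_{\!\sigma})$ and $\underline{\dim}_{\mathrm B}\nu_{\boldsymbol{\mu}} \ge \max_{\sigma \in \mathcal{A}} \sup_{\mathbf{P}_{\!\sigma}\in\mathcal{Q}^\sigma} S(\mathbf{P}_{\!\sigma})$, which together with $\underline{\dim}_{\mathrm B}\le\overline{\dim}_{\mathrm B}$ show that $\dim_{\mathrm B}\nu_{\boldsymbol{\mu}}$ exists and equals the claimed value. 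Both halves follow the blueprint of the proof of Lemma~\ref{lem:Frostman} with $\sup$ and $\inf$ interchanged, and, in the lower bound, the containment $\pi(B_{\delta/\sqrt d}(\ii))\subseteq B(\pi(\ii),\delta)$ used there is replaced by the dual containment $B(x,C_0\delta)\subseteq\overline{\pi(B_\delta(\alpha_\delta(\ii)))}$ provided by Lemma~\ref{lem:BallInCube}.

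For the upper bound, given any $x\in F$ and small $\delta>0$, pick any $\ii\in\pi^{-1}(x)$, so that $x\in\pi(B_{\delta/\sqrt d}(\ii))$. Since this set has diameter at most $\delta$, it lies in $B(x,\delta)$, and Lemma~\ref{lem:UniformBoundMeasure} gives
\[
\nu_{\boldsymbol{\mu}}(B(x,\delta))\ge \nu_{\boldsymbol{\mu}}(\pi(B_{\delta/\sqrt d}(\ii)))\gtrsim \delta^{\,\max_{\sigma \in \mathcal{A}} \sup_{\mathbf{P}_{\!\sigma}\in\mathcal{Q}^\sigma} S(\mathbf{P}_{\!\sigma})}
\]
uniformly in $x\in F$, which by the definition of $\overline{\dim}_{\mathrm B}$ for measures is the desired upper bound.

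For the lower bound, fix $\varepsilon>0$, choose $\sigma\in\mathcal{A}$ attaining the maximum, and use Lemma~\ref{lem:4} together with continuity of $S$ to produce, for every small $\delta$, some $\ii\in\Sigma_\delta^\sigma$ with $S(\tau_\delta^\sigma(\ii))\ge \max_{\sigma \in \mathcal{A}}\sup_{\mathbf{P}_{\!\sigma}\in\mathcal{Q}^\sigma}S(\mathbf{P}_{\!\sigma})-\varepsilon$. Setting $x=\pi(\alpha_\delta(\ii))$, Lemma~\ref{lem:BallInCube}\,\eqref{eq:71} yields $B(x,C_0\delta)\subseteq\overline{\pi(B_\delta(\alpha_\delta(\ii)))}$, and combining with Lemma~\ref{lem:BallInCube}\,\eqref{eq:70} and Lemma~\ref{lem:MeasureSymbCube} gives
\[
\nu_{\boldsymbol{\mu}}(B(x,C_0\delta))\le \nu_{\boldsymbol{\mu}}\bigl(\overline{\pi(B_\delta(\alpha_\delta(\ii)))}\cap F\bigr)\approx \widetilde\nu_{\boldsymbol{\mu}}(B_\delta(\ii))\approx \delta^{S(\tau_\delta^\sigma(\ii))}\lesssim \delta^{\max_{\sigma}\sup_{\mathcal{Q}^\sigma} S-\varepsilon}.
\]
If $\underline{\dim}_{\mathrm B}\nu_{\boldsymbol{\mu}}$ were strictly less than $\max_\sigma\sup S-\varepsilon$, then by definition there would exist $s<\max_\sigma\sup S-\varepsilon$, $c>0$ and $r_n\to 0$ with $\nu_{\boldsymbol{\mu}}(B(y,r_n))\ge c r_n^s$ for every $y\in F$ and every $n$; applying the construction at scale $\delta_n=r_n/C_0$ produces points $x_n$ with $\nu_{\boldsymbol{\mu}}(B(x_n,r_n))\lesssim r_n^{\max_\sigma\sup S-\varepsilon}$, forcing $c\lesssim r_n^{\max_\sigma\sup S-\varepsilon-s}\to 0$, a contradiction. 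Sending $\varepsilon\to 0$ closes the lower bound.

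The delicate ingredient is the approximation $\nu_{\boldsymbol{\mu}}(\overline{\pi(B_\delta(\ii))}\cap F)\approx \widetilde\nu_{\boldsymbol{\mu}}(B_\delta(\ii))$: a priori the bounding cuboid of an approximate cube could enclose mass from foreign cubes. I would establish it by iterating the SPPC across the nested principal subspaces $E_1^\sigma\subset E_2^\sigma\subset\cdots\subset E_d^\sigma=\mathbb{R}^d$. For any $\jj$ with $\pi(\jj)$ in the cuboid, an induction on $n$ shows that knowing $\Pi_{n-1}^\sigma j_\ell=\Pi_{n-1}^\sigma i_\ell$ for $\ell\le L_\delta(\ii,\sigma_{n-1})$, together with the $\sigma_n$-coordinate of $\pi(\jj)$ lying in the cuboid's $\sigma_n$-range, forces via SPPC on $E_n^\sigma$ that $\Pi_n^\sigma j_\ell=\Pi_n^\sigma i_\ell$ for $\ell\le L_\delta(\ii,\sigma_n)$. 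At $n=d$ the accumulated constraints reproduce the defining conditions of $B_\delta(\ii)$, so $\pi^{-1}(\overline{\pi(B_\delta(\ii))})=B_\delta(\ii)$ up to a $\widetilde\nu_{\boldsymbol{\mu}}$-null set of sequences projecting onto cylinder boundaries, along the lines of the last part of the proof of Lemma~\ref{lem:MeasureSymbCube}.
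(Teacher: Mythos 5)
Your proposal is correct and follows essentially the same route as the paper's proof: the upper bound by fitting $\pi(B_{\delta/\sqrt d}(\ii))$ inside $B(x,\delta)$ and invoking Lemma~\ref{lem:UniformBoundMeasure}, and the lower bound by choosing a near-extremal type, perturbing with $\alpha_\delta$, and applying Lemmas~\ref{lem:MeasureSymbCube} and~\ref{lem:BallInCube}. Your extra paragraph merely makes explicit the SPPC-based fact (that the bounding cuboid of an approximate cube captures no foreign mass up to a $\nu_{\boldsymbol{\mu}}$-null set) which the paper uses implicitly, and your concluding contradiction with the definition of $\underline{\dim}_{\mathrm B}\nu_{\boldsymbol{\mu}}$ is the same step the paper leaves to the reader.
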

\begin{proof}
Let $x\in F$ and $0<\delta<1$ be arbitrary, furthermore, $\ii\in\Sigma$ such that $\pi(\ii)=x$. Then $\pi(B_{\delta/\sqrt{d}}(\ii))\subseteq B(x,\delta)\cap F$, hence, by Lemma~\ref{lem:UniformBoundMeasure},
\begin{equation*}
	\nu_{\boldsymbol{\mu}}(B(x,\delta)\cap F) \gtrsim \nu_{\boldsymbol{\mu}}(\pi(B_{\delta/\sqrt{d}}(\ii))) \gtrsim \delta^{\max_{\sigma \in \mathcal{A}}\, \sup_{\mathbf{P}_{\! \sigma}\in\mathcal{Q}^{\sigma}} S(\mathbf{P}_{\!\sigma})},
\end{equation*}    
which shows that $\overline{\dim}_{\mathrm B} \nu_{\boldsymbol{\mu}} \leq \max_{\sigma \in \mathcal{A}}\, \sup_{\mathbf{P}_{\! \sigma}\in\mathcal{Q}^{\sigma}} S(\mathbf{P}_{\!\sigma})$.

For the other direction, fix $\varepsilon>0$ and choose (any) $\sigma\in\mathcal{A}$ which maximises $\sup_{\mathbf{P}_{\! \sigma}\in\mathcal{Q}^{\sigma}} S(\mathbf{P}_{\!\sigma})$. For $\delta$ small enough there exists $\ii\in\Sigma_{\delta}^{\sigma}$ such that $S(\tau_{\delta}^{\sigma}(\ii))\geq \sup_{\mathbf{P}_{\! \sigma}\in\mathcal{Q}^{\sigma}} S(\mathbf{P}_{\!\sigma})-\varepsilon$. Using Lemma~\ref{lem:MeasureSymbCube} and~\ref{lem:BallInCube},
\begin{equation*}
\nu_{\boldsymbol{\mu}}(B(\pi(\alpha_{\delta}(\ii)),C_0\cdot \delta))\cap F) \leq \nu_{\boldsymbol{\mu}}(\pi(B_{\delta}(\alpha_{\delta}(\ii)))) \approx \nu_{\boldsymbol{\mu}}(\pi(B_{\delta}(\ii))) \lesssim \delta^{\max_{\sigma \in \mathcal{A}}\, \sup_{\mathbf{P}_{\! \sigma}\in\mathcal{Q}^{\sigma}} S(\mathbf{P}_{\!\sigma})-\varepsilon}.
\end{equation*}
Since $\varepsilon>0$ was arbitrary, the proof is complete.
\end{proof}

\begin{proof}[Proof of Theorem~\ref{thm:dimBFmeasure}]
The claims about $\dim_{\mathrm F} \nu_{\boldsymbol{\mu}}$ follow directly from Theorem~\ref{thm:Lqmain}, Proposition~\ref{prop:maxminexponent} and Lemmas~\ref{lem:asymptotes} and~\ref{lem:Frostman}. The claims about $\dim_{\mathrm B} \nu_{\boldsymbol{\mu}}$ follow directly from Theorem~\ref{thm:Lqmain}, Proposition~\ref{prop:maxminexponent} and Lemmas~\ref{lem:asymptotes} and~\ref{lem:box}. If $F$ is a $\sigma$-ordered Lalley--Gatzouras sponge, then $\mathcal{A}=\{\sigma\}$ and $\mathcal{Q}^{\sigma}=\mathcal{P}^{\sigma}$, so the claims follow from Proposition~\ref{prop:maxminexponent}.
\end{proof}

\subsection*{Acknowledgment}
The author was supported by a \emph{Leverhulme Trust Research Project Grant} (RPG-2019-034).

\bibliographystyle{abbrv}
\bibliography{biblio_methodtypes}

\end{document}